\definecolor{labelkey}{rgb}{0.6,0,1}
\theoremstyle{plain}
\newtheorem{theorem}{Theorem}[section]
\newtheorem{lemma}[theorem]{Lemma}
\newtheorem{proposition}[theorem]{Proposition}
\newtheorem{assumptions}[theorem]{Assumptions}
\theoremstyle{definition}
\newtheorem{definition}[theorem]{Definition}
\def\bhyp#1{\begin{equation}\label{#1}\begin{array}{c}}
\def\ehyp{\end{array}\end{equation}}
\newcounter{cst}
\theoremstyle{remark}
\newtheorem{remark}[theorem]{Remark}
\numberwithin{equation}{section}
\numberwithin{figure}{section}
\newcommand{\RR}{{\mathbb R}}
\newcommand{\NN}{{\mathbb N}}
\def\O{\Omega}
\def\dsp{\displaystyle}
\def\bfn{\mathbf{n}}
\def\bfa{\mathbf{a}}
\def\disc{{\mathcal D}}
\def\mesh{{\mathcal M}}
\newcommand{\polyd}{{\mathcal T}}
\def\edges{{\mathcal E}}
\def\edge{\sigma}
\def\xcv{x_K}
\def\cv{K}
\newcommand{\edgescv}{{{\edges}_K}}  
\newcommand{\edgesext}{{{\edges}_{\rm ext}}} 
\newcommand{\edgesint}{{{\edges}_{\rm int}}} 
\newcommand{\centers}{\mathcal{P}}
\def\dr{\partial}
\newcommand{\centeredge}{\overline{x}_\edge} 
\newcommand{\cH}{{\mathcal H}}
\newcommand{\cK}{{\mathcal K}}
\newcommand{\cI}{{\mathcal I}}
\DeclareMathOperator*{\argminB}{argmin}
\def\bvarphi{\boldsymbol{\phi}}
\def\bG{\mathbf{G}}
\newif\ifcorr\corrtrue
\definecolor{violet}{rgb}{0.580,0.,0.827}
\def\bpsi{{\boldsymbol \psi}}
\newcommand{\ud}{\, \mathrm{d}} 
\def\div{\mathop{\rm div}}
\title[Analysis of schemes for non-linear variational inequalities]{A gradient discretisation method to analyse numerical schemes for non-linear variational inequalities, application to the seepage problem}
\author{Yahya Alnashri}
\address[Yahya Alnashri]{School of Mathematical Sciences, Monash University, Victoria 3800, Australia\\
and Umm-Alqura University}
\email{yanashri@ummalqura.edu}
\author{J\'er\^ome Droniou}
\address[J\'er\^ome Droniou]{School of Mathematical Sciences, Monash University, Victoria 3800, Australia.}
\email{jerome.droniou@monash.edu}
\subjclass[2010]{35J87, 65N12, 76S05}
\keywords{Elliptic non-linear variational inequalities, Leray-Lions operator, gradient schemes, obstacle problem, Signorini boundary conditions, convergence, hybrid mimetic mixed methods.}
\date{\today}
\begin{document}
\newcommand{\subscript}[2]{$#1 _ #2$}

\begin{abstract}
Using the gradient discretisation method (GDM), we provide a complete and unified numerical analysis for non-linear variational inequalities (VIs) based on Leray--Lions operators and subject to non-homogeneous Dirichlet and Signorini boundary conditions. This analysis is proved to be easily extended to the obstacle and Bulkley models, which can be formulated as non-linear VIs. It also enables us to establish convergence results for many conforming and nonconforming numerical schemes included in the GDM, and not previously studied for these models. Our theoretical results are applied to the hybrid mimetic mixed method (HMM), a family of schemes that fit into the GDM. Numerical results are provided for HMM on the seepage model, and demonstrate that, even on distorted meshes, this method provides accurate results.
\end{abstract}

\maketitle


\section{Introduction}\label{sec:intro}
\par Non-linear variational inequalities are related to a wide range of applications. In particular, unconfined seepage models, free boundary problems involving Signorini boundary conditions,  can be used to study the construction of earth dams, embankments and hydraulic design. With non-linear variational inequalities, one can also study the Bulkley fluid model, which is applicable to different phenomena and processes, such as blood flow \cite{N-7-app}, food processing \cite{A3} and Bingham fluid flows \cite{N-21-Louis}. 

We consider here variational inequalities (VIs) related to elliptic equations of the type
\begin{subequations}
\label{model:ll}
\begin{align}
-\div\bfa(x,u,\nabla u)&=f \quad \mbox{in } \O,\label{eq:pde}\\ 
u&=g \quad \mbox{on } \partial\O,
\end{align}
\end{subequations}
where $\Omega$ is an open bounded connected subset of $\mathbb{R}^{d}$, $d\ge 1$, with boundary $\partial\O$. Precise assumptions on data will be stated in the next sections. The purpose of this paper is to provide a complete and unified convergence analysis of numerical schemes for VIs based on \eqref{model:ll}. Our convergence result applies to a wide range of methods, such as finite elements methods (conforming and non-conforming), finite volume methods, mimetic finite difference schemes, etc. To our knowledge, this result is the first one for non-conforming methods applied
to non-linear variational inequalities.

The theory on PDEs of the kind \eqref{model:ll} has been covered in several works, see \cite{N-15,N-16,N-17,N-20} and references therein. A number of numerical analyses
on these models has also been carried out, starting from the approximation
of the $p$-Laplace equation, with proven rates of convergences, by $\mathbb{P}_1$ finite
elements in \cite{N-11}. Subsequent works consider more general Leray--Lions models,
possibly transient, and establish either error estimates (under regularity assumptions
on the solution to the PDE), or prove the convergence towards a solution with
minimal regularity. We refer the reader to 
\cite{B1,N-16,DiPietro-Droniou:16,DiPietro-Droniou:15,Glowinski.Rappaz:03,Liu.Yan:01,Andreianov.Boyer.ea:05,Andreianov.Boyer.ea:07} for a few examples.
Several algorithms can be used to compute the solution to the corresponding non-linear
numerical schemes, from basic fixed-point iterations (which corresponds to the
Ka\c{c}anov method \cite{N-20}) to Newton methods, to multigrid techniques \cite{N-10}, to augmented Lagrangian algorithms \cite{N-12}.

The mathematical theory of variational inequalities based on equations of the kind \eqref{model:ll} is well understood, see e.g. \cite{NB-1,N-3,N-4,N-18}. 
We note that \cite{N-4} considers an obstacle problem with measure source terms rather than
$W^{1,p}(\O)'$ source terms (the theory for the corresponding PDEs is developed in
\cite{N-14}). \cite{N-9} studies non-linear quasi-variational inequalities and proposes a semi-smooth Newton iteration to obtain a solution. 

The numerical approximation of variational inequalities based on linear operators, including the issues faced
with numerical approximations of the convex set described by the obstacle, has been covered in a number
of works -- see, e.g., \cite{G1} and references therein. Some works tackle the question of
the numerical approximation of VIs based on non-linear equations such as \eqref{model:ll}. Under strong monotonicity assumptions on the operator, \cite{A4} develops a convergence analysis of conforming numerical schemes for non-linear VIs. \cite{A-2-19} develops the analysis of conforming finite elements method for VIs involving a non-linear proper function. In \cite{N-8,nl-VI-1994}, $\mathbb P_1$ finite elements are applied to the obstacle problem for a $p$-Laplace-like
operator, with homogeneous Dirichlet boundary conditions and zero barrier inside the domain; an \emph{a priori} error estimate is obtained under $W^{2,p}$ regularity on the solution. See also
\cite{nl-VI-1995} for non-linear parabolic variational inequalities. The Bulkley model also has been approximated by $\mathbb P_1$ finite elements \cite{N-19-app,N-22} and Lagrange methods \cite{N-19-app}.
In \cite{A1}, a seepage model is approximated by a finite elements method,
but no convergence analysis is carried out. The authors utilise a fixed point method (Ka\c{c}anov) to treat the non-linearity and compute the solution to the scheme.

All these studies of numerical schemes for non-linear VIs deal with conforming numerical schemes, mostly $\mathbb{P}_1$ finite elements. It seems that a lot of work remains to be done, starting from convergences analyses and tests on other
kinds of schemes than conforming finite element (FE) schemes (e.g., non-conforming FE, finite volume, mimetic finite differences, etc.).
Our work aims at filling this gap. We provide a complete convergence analysis of numerical schemes
for variational inequalities based on non-linear Leray--Lions operators, and we
present numerical results using the hybrid mimetic mixed method. This method, contrary to
FE methods, is applicable on grids with very general cell geometries as encountered in some
porous flow applications.

Instead of conducting individual studies for each numerical scheme, we develop a
unified convergence analysis that is readily applicable to several methods.
This is done by adapting the gradient discretisation method (GDM) to non-linear VIs.
The GDM is a framework for the analysis of numerical schemes for diffusion PDEs.
It covers a variety of methods, such as conforming, non-conforming and mixed finite elements methods (including the non-conforming ``Crouzeix--Raviart'' method and the  Raviart--Thomas method), hybrid mimetic mixed methods (which contain hybrid mimetic finite differences,
hybrid finite volumes/SUSHI scheme and mixed finite volumes), nodal mimetic finite differences,
and finite volumes methods (such as some multi-points flux approximation and discrete duality finite volume methods). The original GDM identifies a small number of properties required to
establish the convergence of numerical schemes for various models based
on elliptic and parabolic PDEs: linear and non-linear diffusion, stationnary and transient
Leray--Lions equations, the Stefan model of melting material, the Richards model
of water flow in an unsaturated porous medium, diphasic flows, etc.
The GDM is also adapted to various boundary conditions.
For more details, we refer the reader to the monograph \cite{S1} and to the
papers \cite{B1,B2,B10,zamm2013,eym-12-stef,Droniou.Eymard:15}.

\par In this work, we adapt the gradient discretisation method to three non-linear variational inequalities involving Leray--Lions operators. We show that the GDM provides
a unified convergence analysis of numerical methods for these models. 
This analysis yields convergence theorems of numerical schemes for
meaningful models of VIs, including the non-linear seepage problems and the Bulkley model.
To illustrate our theoretical results, we apply the hybrid mimetic mixed (HMM) method
to the seepage problem, and show that -- even on distorted meshes -- its efficiency is comparable
to the $\mathbb{P}_1$ finite elements of \cite{A1}. One of its additional strengths, however,
is that it is applicable on very generic meshes, contrary to the $\mathbb{P}_1$ finite element
method. As proved in \cite{B5}, the HMM method contains the hybrid finite volume method of \cite{sushi}, the (mixed/hybrid) mimetic finite difference method of \cite{bre-05-fam} and the mixed finite volume method of \cite{dro-06-mix}.

This paper is organised as follows. Section \ref{sec:nlsign} details the non-linear Signorini problem, its approximation by the gradient discretisation method,
and the corresponding convergence results. Section \ref{sec:obs} shows that the GDM can successfully be adapted to the obstacle problem and the Bulkley fluid model. 
A short section, Section \ref{sec:approx.barriers}, describes the case where the
barriers of the Signorini and obstacle problems are approximated as part of the
discretisation process.
In Section \ref{sec:HMM} we show that our result apply to the HMM scheme, and establish
its convergence for all three models. Section \ref{sec:test} present numerical tests that
demonstrate the efficiency of the HMM method for solving the seepage model on various meshes,
including very distorted ones.
An appendix, Section \ref{sec:appen}, presents an interpolation operator useful for the HMM method
(and, more generaly, for methods based on cell and face unknowns).


\section{Non-linear Signorini problem}\label{sec:nlsign}

\subsection{Continous problem}
\par We first consider the following non-linear Signorini problem:
\begin{align}
-\div\bfa(x,\bar u,\nabla\bar u)= f &\mbox{\quad in $\Omega$,} \label{nlsign1}\\
\bar{u} =g &\mbox{\quad on $\Gamma_{1}$,} \label{nlsign2}\\
\bfa(x,\bar u,\nabla\bar u)\cdot \mathbf{n}= 0 &\mbox{\quad on $\Gamma_{2}$,} \label{nlsign3}\\
\left.
\begin{array}{r}
\dsp \bar{u} \leq a\\
\dsp \bfa(x,\bar u,\nabla\bar u)\cdot \mathbf{n} \leq 0\\
\dsp (a-\bar{u})\bfa(x,\bar u,\nabla\bar u)\cdot \mathbf{n}  = 0
\end{array} \right\} 
& \mbox{\quad on}\; \Gamma_{3}.
\label{nlcond}
\end{align}
Here $\mathbf{n}$ denotes the unit outer normal to the boundary $\partial\O$, which is
split in three parts ($\Gamma_1, \Gamma_2, \Gamma_3)$.
The assumptions on the Leray--Lions operator $\bfa$ are standard:
\begin{equation}\label{asm:carth}
\mbox{$\bfa : \O \times \RR \times \RR^d \rightarrow \RR^d$ is a Caratheodory function},
\end{equation}
(i.e., for a.e.\ $x \in \O$, $(s,\xi)\mapsto \bfa(x,s,\xi)$ is continuous
and, for all $(s,\xi) \in \RR\times \RR^d$, $x \rightarrow \bfa(x,s,\xi)$ is measurable) and, for some $p\in(1,\infty)$ and $p'=\frac{p}{p-1}$, 
\begin{equation}\label{asm:grow}
\begin{aligned}
&\exists \overline{a} \in L^{p'}(\O), \exists \mu >0 :\\
&|\bfa(x,s,\xi)|\leq \overline{a}(x)+\mu |\xi|^{p-1}, \mbox{ for a.e. }x \in \O, \; \forall s \in \RR,\; \forall \xi \in \RR^d,
\end{aligned}
\end{equation}
\begin{equation}\label{asm:coer}
\exists \underline{a}>0: \bfa(x,s,\xi)\cdot\xi \geq \underline{a}|\xi|^p, \mbox{ for a.e. }x \in \O, \; \forall s \in \RR,\; \forall \xi \in \RR^d,
\end{equation}
\begin{equation}\label{asm:mont}
(\bfa(x,s,\xi)-\bfa(x,s,\chi))\cdot (\xi-\chi) \geq 0 \mbox{ for a.e. }x \in \O, \; \forall s \in \RR,\; \forall \xi, \chi \in \RR^d.
\end{equation}
Assumptions \eqref{asm:grow}, \eqref{asm:coer} and \eqref{asm:mont} are respectively called the growth, coercivity and monotonicity conditions. Setting $\bfa(x,u,\nabla u)= |\nabla u|^{p-2}\nabla u$ 
in \eqref{nlsign1} gives in particular the $p$-Laplacian operator. 

\begin{remark}\label{rem:HK}
With $p=2$ and $\bfa(x,u,\nabla u)=\mathcal H^{\lambda}_\varepsilon(u-h(x))\mathbf{K}(x)\nabla u$, with $\mathcal H^{\lambda}_\varepsilon$ the regularised Heavyside function defined in \eqref{def:Hreg}, $h$ a fixed function and $\mathbf{K}$ the permeability tensor, Problem \eqref{nlsign1}--\eqref{nlcond} covers seepage models. The role of the regularised Heaviside function is
to extend the Darcy law to the dry domain. We refer the reader to \cite{A1} and references therein for more details.
\end{remark}


\begin{assumptions}\label{hip:nosign} The assumptions on the data in Problem \eqref{nlsign1}--\eqref{nlcond} are the following:
\begin{enumerate}
\item the operator $\bfa$ satisfies \eqref{asm:carth}--\eqref{asm:mont} and the domain $\O$ has a Lipschitz boundary,
\item the parts of the boundary, $\Gamma_{1}, \Gamma_{2}$ and $\Gamma_{3}$, are assumed to be measurable and pairwise disjoint
subsets of $\partial\Omega$ such that $\Gamma_{1}\cup \Gamma_{2} \cup \Gamma_{3} = \partial\Omega$ and the $(d-1)$-dimensional measure of $\Gamma_{1}$ is non zero,
\item the source term $f$ belongs to $L^{p'}(\Omega)$, the barrier $a$ belongs to $L^p(\partial\Omega)$ and the boundary data $g$ belongs to $W^{1-\frac{1}{p},p}(\partial\O)$,
\item the closed convex set $\mathcal{K}:=\{ v \in W^{1,p}(\Omega)\; : \; \gamma(v)=g\; \mbox{on}\; \Gamma_{1},\; \gamma(v) \leq a\; \mbox{on}\; \Gamma_{3}\}
$ is non-empty.
\end{enumerate}
\end{assumptions}

Based on Assumption \ref{hip:nosign}, Problem \eqref{nlsign1}--\eqref{nlcond} can be written in the following weak sense:
\begin{equation}\label{wenlsign}
\left\{
\begin{array}{ll}
\dsp \mbox{Find}\; \bar{u} \in \mathcal{K} \mbox{ such that, } \forall v\in\cK,\\
\dsp\int_\O \bfa(x,\bar u,\nabla\bar u) \cdot \nabla(\bar{u}-v)\ud x
\leq \int_\O f(\bar{u}-v)\ud x.
\end{array}
\right.
\end{equation}
The existence of a solution to Problem \eqref{wenlsign} is ensured
by \cite[Theorem 8.2, Chap. 2]{NB-1}.


\subsection{The gradient discretisation method}
The GDM consists in replacing, in the weak formulation of the model, the continuous
space and operators by discrete ones, obtaining thus a gradient scheme (GS).
The discrete elements are gathered in what is called a gradient discretisation (GD).
The restrictions put on these discrete elements are rather light, and there
are therefore a large choice of possible GDs. It was shown in previous papers
(see \cite{B10} for a review) that, for a number of classical schemes,
specific GDs can be chosen such that the corresponding GSs are the considered
schemes.

\begin{definition}(Gradient discretisation for Signorini BCs)\label{def:gd-nlsign}. A gradient discretisation $\mathcal{D}$ for Signorini boundary conditions and nonhomogeneous Dirichlet boundary conditions is $\mathcal{D}=(X_\disc, \Pi_{\mathcal{D}}, \cI_{\disc,\Gamma_1},\mathbb{T}_{\mathcal{D}}, \nabla_{\mathcal{D}})$, where:
\begin{enumerate}
\item the set of discrete unknowns $X_\disc= X_{\disc,\Gamma_{2,3}}\oplus X_{\disc,\Gamma_1}$ is a direct sum of two finite dimensional spaces on $\RR$. The first space corresponds to the interior degrees of freedom and to the boundaries degrees of freedom on $\Gamma_2 \cup \Gamma_3$. The second space corresponds to the boundary degrees of freedom on $\Gamma_1$, 
\item the linear mapping $\Pi_{\mathcal{D}} : X_{\mathcal{D}} \rightarrow L^{p}(\varOmega)$ 
reconstructs functions from the degrees of freedom,
\item the linear mapping $\cI_{\disc,\Gamma_1} : W^{1-\frac{1}{p},p}(\partial\O) \rightarrow X_{\disc,\Gamma_1}$ interpolates the traces of functions in $W^{1,p}(\O)$ on the
degrees of freedom,
\item the linear mapping $\mathbb{T}_{\mathcal{D}} : X_{\mathcal{D}}\rightarrow L^p(\partial\O) $ reconstructs traces from the degrees of freedom,
\item the linear mapping $\nabla_{\mathcal{D}} : X_{\mathcal{D}} \rightarrow L^{p}(\Omega)^{d}$ 
reconstructs gradients from the degrees of freedom.
It must be such that $\|\nabla_{\mathcal{D}}\cdot\|_{L^{p}(\Omega)^{d}}$ is a norm on $X_{\disc,\Gamma_{2,3}}$.
\end{enumerate}
\label{def:gdnhseepage}
\end{definition}

As already explained, the GS is obtained by taking the weak formulation
\eqref{wenlsign} of the model, and replacing the continuous elements (space, function, gradient, trace...) by the discrete elements provided by the chosen GD.

\begin{definition}(Gradient scheme for Signorini problem). Let $\mathcal{D}$ be a gradient discretisation in the sense of Definition \ref{def:gdnhseepage}. The corresponding gradient scheme for Problem \eqref{wenlsign} is
\begin{equation}\label{gsnlsign}
\left\{
\begin{array}{ll}
\dsp \mbox{Find}\; u \in \mathcal{K}_{\mathcal{D}}\mbox{ such that } \forall v\in\cK_\disc,\\
\dsp\int_\O \bfa(x,\Pi_\disc u,\nabla_\disc u) \cdot\nabla_{\mathcal{D}}(u-v)\ud x
\leq \dsp\int_\O f\Pi_{\mathcal{D}}{(u-v)}\ud x,
\end{array}
\right.
\end{equation}
where $\mathcal{K}_{\mathcal{D}}:=\{ v \in \cI_{\disc,\Gamma_1}g+X_{\disc,\Gamma_{2,3}}\; : \; \mathbb{T}_{\mathcal{D}}v \leq a\; \mbox{on}\; \Gamma_{3}\}$.
\end{definition}

We presented in \cite{AD14} three properties called coercivity, GD-consistency and limit-conformity to assess the accuracy of gradient schemes for VIs. These properties were sufficient to establish error estimates and prove the
convergence of the GDM for VIs based on \emph{linear} differential operator. For non-linear problems, an additional property called compactness is required to ensure the convergence of the GDM. Let us describe these four properties in the context of Signorini boundary conditions.


\begin{definition}[Coercivity]\label{def:nlsigncoer}
If $\disc$ is a gradient discretisation in the sense of Definition \ref{def:gd-nlsign},
set
\begin{eqnarray}
C_{\mathcal{D}} =  {\displaystyle \max_{v \in X_{\mathcal{D},\Gamma_{2,3}}\setminus\{0\}}\Big(\frac{\|\Pi_{\mathcal{D}}v\|_{L^{p}(\Omega)}}{\|\nabla_{\mathcal{D}} v\|_{L^{p}(\Omega)^{d}}}}+ \frac{\|\mathbb{T}_{\mathcal{D}}v\|_{L^p(\partial\O)}}{\|\nabla_{\mathcal{D}} v\|_{L^{p}(\Omega)^{d}}}\Big).
\label{coercivityseepage}
\end{eqnarray}
A sequence $(\mathcal{D}_{m})_{m \in \mathbb{N}}$ of gradient discretisations is \emph{coercive} if $(C_{\mathcal D_m})_{m\in\NN}$ remains bounded.
\end{definition}
\begin{definition}[GD-Consistency]\label{def:nlsigncons}
If $\disc$ is a gradient discretisation in the sense of Definition \ref{def:gd-nlsign}, define
$S_{\mathcal{D}} : \mathcal{K}\to [0, +\infty)$ by
\begin{equation}
\forall \varphi\in \mathcal{K}, \; S_{\mathcal{D}}(\varphi)= 
\min_{v\in \mathcal K_{\disc}}\left(\| \Pi_{\mathcal{D}} v - \varphi \|_{L^{p}(\Omega)} 
+ \| \nabla_{\mathcal{D}} v - \nabla \varphi \|_{L^{p}(\Omega)^{d}}\right).
\label{consistencysig}
\end{equation}
A sequence $(\mathcal{D}_{m})_{m \in \mathbb{N}}$ of gradient discretisations is \emph{GD-consistent} (or simply \emph{consistent}, for short) if $\lim_{m\to\infty} S_{\disc_m}(\varphi)=0$, for all $\varphi \in \cK$.
\end{definition}
\begin{definition}[Limit-conformity]\label{def:nlsignlim}
If $\disc$ is a gradient discretisation in the sense of Definition \ref{def:gd-nlsign}, define $W_{\mathcal{D}} : \{\bpsi\in C^2(\overline \O)^d\,:\,\bpsi\cdot\bfn=0\mbox{ on $\Gamma_2$}\} \to [0, +\infty)$ by
\begin{equation}
\begin{aligned}
&\forall \bpsi \in C^2(\overline \O)^d\mbox{ such that $\bpsi\cdot\bfn=0$ on $\Gamma_2$},\\
&W_{\mathcal{D}}(\bpsi)
 = \sup_{v\in X_{\mathcal{D},\Gamma_{2,3}}\setminus \{0\}}\frac{\Big|\dsp\int_{\Omega}(\nabla_{\mathcal{D}}v\cdot \bpsi + \Pi_{\mathcal{D}}v \div (\bpsi)) \ud x
 -\int_{\Gamma_3} \bpsi\cdot\bfn\mathbb{T}_{\mathcal{D}}v \ud x \Big|}{\| \nabla_\disc v \|_{L^p(\O)^d}}.
\label{conformityseepage}
\end{aligned}
\end{equation}
A sequence $(\mathcal{D}_{m})_{m \in \mathbb{N}}$ of gradient discretisations is \emph{limit-conforming} if, for all $\psi \in  C^2(\overline \O)^d$ such that $\bpsi\cdot\bfn=0$ on $\Gamma_2$,
 ${\lim_{m \rightarrow \infty}W_{\mathcal{D}_m}(\psi)=0}$.
\end{definition}

\begin{definition}[Compactness]\label{def:comnlobs}
A sequence $(\mathcal{D}_{m})_{m \in \mathbb{N}}$ of GDs is \emph{compact} if, for any sequence $(u_{m})_{m\in\mathbb{N}}$
with $u_m\in \cK_{\mathcal{D}_m}$ and such that $(\| \nabla_{\disc_m} u_m \|_{L^p(\O)^d})_{m \in \mathbb{N}}$ is bounded, the sequence $(\Pi_{\mathcal{D}_{m}} u_{m})_{m \in\mathbb{N}}$ is relatively compact in $L^{p}(\Omega)$.
\end{definition}



\subsection{Convergence results}
We can now state and prove our main convergence theorem for the gradient discretisation
method applied to the non-linear Signorini problem.

\begin{theorem}[Convergence of the GDM, non-linear Signorini problem]\label{thm:convnlsign}~\\
Under Assumptions \ref{hip:nosign}, let $(\mathcal{D}_{m})_{m \in \mathbb{N}}$ be a sequence of gradient discretisations in the sense of Definition \ref{def:gdnhseepage}, such that $(\disc_m)_{m\in\NN}$ is coercive, GD-consistent, limit-conforming and compact, and such that $\cK_{\mathcal{D}_{m}}$ is non-empty for any $m$. Then, for any $m \in \mathbb{N}$, the gradient scheme \eqref{gsnlsign} has at least one solution $u_{m} \in \cK_{\mathcal{D}_{m}}$.

Assume furthermore that\begin{equation}\label{newcons}
\begin{aligned}
&\exists \varphi_g \in W^{1,p}(\O)\mbox{ s.t. }\gamma(\varphi_g)=g\mbox{ and }\\
&\lim_{m\to\infty}\min \{ \| \Pi_{\disc_m}v-\varphi_g\|_{L^{p}(\O)} +
\|  \mathbb{T}_{\disc_m}v-\gamma(\varphi_g)\|_{L^{p}(\Gamma_{3})} \\
&\qquad+\|  \nabla_{\disc_m}v-\nabla\varphi_g\|_{L^{p}(\O)^d}\,:\, v-\mathcal{I}_{\disc_m,\Gamma_1} \gamma(\varphi_g) \in X_{\disc_m, \Gamma_{2,3}}\}=0.
\end{aligned}
\end{equation}
Then, up to a subsequence as $m\to\infty$, $\Pi_{\mathcal{D}_{m}}u_{m}$ converges strongly in $L^{p}(\Omega)$ to a weak solution $\bar{u}$ of Problem \eqref{wenlsign}, and $\nabla_{\disc_m}u_m$ converges weakly 
in $L^p(\O)^d$ to $\nabla\bar u$.

If moreover $\bfa$ is strictly monotonic in the sense
\begin{equation}\label{asm:smont}
\left.
\begin{array}{ll}
\left(\bfa(x,s,\xi)-\bfa(x,s,\chi)\right)\cdot \left(\xi-\chi \right) >0, \mbox{ for a.e. } x \in \O,\; \forall s \in \RR,\\
\forall \xi, \chi \in \RR^{d} \mbox{ with } \xi \neq \chi,
\end{array}
\right.
\end{equation}
then 
$\nabla_{\mathcal{D}_{m}}u_{m}$ converges strongly in $L^{p}(\Omega)^d$ to $\nabla \bar{u}$.
\label{th2}
\end{theorem}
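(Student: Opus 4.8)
The plan is to follow the standard three-step GDM strategy—(i) existence and a priori estimates, (ii) extraction of a limit and identification as a solution, (iii) strong convergence of gradients—adapted to the variational inequality and to the two separate consistency hypotheses (the general GD-consistency on $\mathcal{K}$ of Definition~\ref{def:nlsigncons}, and the extra assumption \eqref{newcons} on a fixed lifting $\varphi_g$ of the Dirichlet datum).

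\emph{Step 1: existence and a priori estimates.} For fixed $m$, existence of $u_m\in\cK_{\disc_m}$ solving \eqref{gsnlsign} follows from a standard Brouwer/monotone-operator argument on the finite-dimensional closed convex set $\cK_{\disc_m}$: the map $v\mapsto\bfa(x,\Pi_\disc v,\nabla_\disc v)$ is continuous by \eqref{asm:carth}, bounded by \eqref{asm:grow}, and the coercivity \eqref{asm:coer} together with the coercivity constant $C_\disc$ and the fact that $\cK_{\disc_m}$ is nonempty give the needed growth to apply a standard result on variational inequalities in finite dimension. To get a uniform bound, pick $v=\cI_{\disc_m,\Gamma_1}g+w_m$ with $w_m\in X_{\disc_m,\Gamma_{2,3}}$ a fixed element of $\cK_{\disc_m}$ (for instance coming from \eqref{newcons} applied to a fixed $\varphi_g$, or from GD-consistency applied to a fixed element of $\cK$); insert it in \eqref{gsnlsign}, use \eqref{asm:coer} on the left, Hölder and \eqref{asm:grow} on the right, and absorb $\|\Pi_{\disc_m}u_m\|_{L^p}$ via $C_{\disc_m}$ which is bounded by coercivity. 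This yields a bound on $\|\nabla_{\disc_m}u_m\|_{L^p(\O)^d}$ independent of $m$, hence on $\|\Pi_{\disc_m}u_m\|_{L^p(\O)}$ and $\|\mathbb{T}_{\disc_m}u_m\|_{L^p(\partial\O)}$.

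\emph{Step 2: passing to the limit and identifying $\bar u$.} By the a priori bounds and reflexivity, up to a subsequence $\nabla_{\disc_m}u_m\rightharpoonup G$ weakly in $L^p(\O)^d$ and $\bfa(\cdot,\Pi_{\disc_m}u_m,\nabla_{\disc_m}u_m)\rightharpoonup \mathcal{A}$ weakly in $L^{p'}(\O)^d$; by compactness, $\Pi_{\disc_m}u_m\to\bar u$ strongly in $L^p(\O)$. A limit-conformity argument (Definition~\ref{def:nlsignlim}) combined with the weak convergences shows $G=\nabla\bar u$, that $\bar u\in W^{1,p}(\O)$, and—using \eqref{newcons} to handle the nonhomogeneous trace and the Signorini constraint $\mathbb{T}_{\disc_m}u_m\le a$ together with GD-consistency—that $\bar u\in\cK$. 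Then, for any test function $\varphi\in\cK$, use GD-consistency to produce $v_m\in\cK_{\disc_m}$ with $\Pi_{\disc_m}v_m\to\varphi$ and $\nabla_{\disc_m}v_m\to\nabla\varphi$; put $v=v_m$ in \eqref{gsnlsign}, rearrange into
\[
\int_\O \bfa(x,\Pi_{\disc_m}u_m,\nabla_{\disc_m}u_m)\cdot\nabla_{\disc_m}u_m\ud x \le \int_\O \bfa(x,\Pi_{\disc_m}u_m,\nabla_{\disc_m}u_m)\cdot\nabla_{\disc_m}v_m\ud x + \int_\O f\,\Pi_{\disc_m}(u_m-v_m)\ud x ,
\]
and take $\liminf$. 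The right-hand side converges (weak $\times$ strong, plus strong $L^p$ convergence of $\Pi_{\disc_m}(u_m-v_m)\to\bar u-\varphi$); for the left-hand side, the key is to produce
\[
\liminf_{m\to\infty}\int_\O \bfa(x,\Pi_{\disc_m}u_m,\nabla_{\disc_m}u_m)\cdot\nabla_{\disc_m}u_m\ud x \ge \int_\O \mathcal{A}\cdot\nabla\bar u\ud x,
\]
which is obtained by the Minty–Browder trick: monotonicity \eqref{asm:mont} gives, for any $w\in L^p(\O)^d$,
\[
\int_\O\bigl(\bfa(x,\Pi_{\disc_m}u_m,\nabla_{\disc_m}u_m)-\bfa(x,\Pi_{\disc_m}u_m,w)\bigr)\cdot(\nabla_{\disc_m}u_m-w)\ge 0 ,
\]
and, passing to the limit (using strong convergence $\Pi_{\disc_m}u_m\to\bar u$ and the Carathéodory/growth conditions so that $\bfa(\cdot,\Pi_{\disc_m}u_m,w)\to\bfa(\cdot,\bar u,w)$ in $L^{p'}$), then choosing $w=\nabla\bar u\pm t\zeta$ and letting $t\to0$, identifies $\mathcal{A}=\bfa(\cdot,\bar u,\nabla\bar u)$ and yields the required $\liminf$ inequality. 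Combining, $\bar u$ satisfies \eqref{wenlsign}.

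\emph{Step 3: strong convergence of gradients under \eqref{asm:smont}.} Once $\bar u$ is known to solve \eqref{wenlsign}, take $\varphi=\bar u$ (admissible) and $v=v_m\to\bar u$ from GD-consistency; the chain of inequalities in Step~2 becomes an equality in the limit, giving
\[
\lim_{m\to\infty}\int_\O \bfa(x,\Pi_{\disc_m}u_m,\nabla_{\disc_m}u_m)\cdot(\nabla_{\disc_m}u_m-\nabla\bar u)\ud x = 0 .
\]
Writing $E_m:=\int_\O\bigl(\bfa(x,\Pi_{\disc_m}u_m,\nabla_{\disc_m}u_m)-\bfa(x,\Pi_{\disc_m}u_m,\nabla\bar u)\bigr)\cdot(\nabla_{\disc_m}u_m-\nabla\bar u)\ud x\ge 0$ (by \eqref{asm:mont}) and using the previous display together with $\bfa(\cdot,\Pi_{\disc_m}u_m,\nabla\bar u)\to\bfa(\cdot,\bar u,\nabla\bar u)$ in $L^{p'}$ and $\nabla_{\disc_m}u_m\rightharpoonup\nabla\bar u$, one gets $E_m\to 0$. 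A standard consequence of strict monotonicity \eqref{asm:smont} (the ``Leray–Lions'' lemma, see \cite{S1}) then upgrades $E_m\to 0$ to $\nabla_{\disc_m}u_m\to\nabla\bar u$ strongly in $L^p(\O)^d$, possibly along a further subsequence but in fact for the whole sequence extracted in Step~2.

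\emph{Main obstacle.} The delicate point is Step~2: correctly handling the nonhomogeneous Dirichlet and Signorini constraints in the limit—i.e.\ proving $\bar u\in\cK$ (in particular $\gamma(\bar u)\le a$ on $\Gamma_3$ and $\gamma(\bar u)=g$ on $\Gamma_1$) from the discrete constraints $\mathbb{T}_{\disc_m}u_m\le a$ and the structure $u_m\in\cI_{\disc_m,\Gamma_1}g+X_{\disc_m,\Gamma_{2,3}}$—which is exactly why hypothesis \eqref{newcons} is introduced; and, simultaneously, running the Minty–Browder identification of the nonlinear flux $\mathcal{A}$ with $\bfa(\cdot,\bar u,\nabla\bar u)$ while only the \emph{inequality} \eqref{gsnlsign} (not an equality) is available, so that the $\liminf$ must be arranged so that the constraint term and the Minty argument cooperate rather than conflict.
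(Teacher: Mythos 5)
Your proposal is correct and follows essentially the same route as the paper's proof: existence via a monotone-operator argument on the finite-dimensional convex set, a priori bounds from coercivity of $\bfa$ and of the GDs, extraction of weak limits via limit-conformity and \eqref{newcons}, identification of the limit flux by the Minty--Browder trick applied to the inequality, and strong gradient convergence from strict monotonicity via the standard Leray--Lions lemma. The only cosmetic difference is that the paper organises the Minty step as a $\limsup$ bound from the scheme (with $\varphi=\bar u$) combined with a $\liminf$ bound from monotonicity, while you phrase it directly as a $\liminf$ inequality; these are the same argument.
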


\begin{remark} Assumption \eqref{newcons} is obviously always satisfied
if $g=0$ (take $\varphi_g=0$). For most sequences of gradient discretisations, the convergence stated in \eqref{newcons} actually
holds for any $\varphi$ with $g=\gamma(\varphi)$, and corresponds to the GD-consistency
of the method for non-homogeneous Fourier BCs (see \cite[Remark 3.49 and Definition 3.37]{S1}).
\end{remark}


\begin{proof}
The proof is inspired from  \cite{B1}, and follows the general path described
in \cite[Section 1.2]{D13} and \cite[Section 2.2]{CTAC14}.

\textbf{Step 1}: existence of a solution to the GS.

Let $\widetilde g\in \cK$ be a lifting of $g$, that is, such that $\gamma(\widetilde g)=g$. Introduce
\[
g_\disc=\argminB_{v\in \cK_\disc}\left(\| \Pi_\disc v-\widetilde g \|_{L^p(\O)}+ \| \nabla_\disc v-\nabla\widetilde g \|_{L^p(\O)^d}\right). 
\]
Let $\langle \cdot,\cdot \rangle$ be the the duality product between the finite dimensional space $X_{\disc,\Gamma_{2,3}}$ and its dual $X_{\disc,\Gamma_{2,3}}^{'}$. 
Define the operator $\mathcal A_{\disc}:X_{\disc,\Gamma_{2,3}} \to X_{\disc,\Gamma_{2,3}}^{'}$
by, for $\widehat u, \widehat v \in X_{\disc,\Gamma_{2,3}}$,
\[
\langle \mathcal A_{\disc}(\widehat u),\widehat v \rangle = \int_\O \bfa(x,\Pi_\disc (\widehat u+g_{\disc})(x), \nabla_\disc (\widehat u+g_{\disc})(x))\cdot\nabla_\disc (\widehat v+g_{\disc})(x) \ud x.
\]
Applying the same reasoning as in \cite{NB-1},
we check that $\mathcal A_\disc$ is an operator of the calculus of variations (this is
extremely easy here, due to the finite dimension of $X_{\disc,\Gamma_{2,3}}$).
The existence of a solution to the scheme \eqref{gsnlsign} is then a consequence of \cite[Theorem 8.2, Chap. 2]{NB-1} since, setting $\widehat u=u-g_\disc$ and $\widehat v=v-g_\disc$, this scheme can be re-written
\[
\mbox{find}\; \widehat u \in \cK_\disc -g_\disc \mbox{ such that }\forall \widehat v\in \cK_\disc -g_\disc,\;
\langle \mathcal A_{\disc}(\widehat u),\widehat u-\widehat v \rangle\le \ell(\widehat u-\widehat v),
\]
where $\ell\in X_{\disc,\Gamma_{2,3}}'$ is defined by $\ell(\widehat w)=\int_\O f\Pi_\disc \widehat w\ud x$.


\textbf{Step 2}: convergence towards the solution to the continuous model.

Let us start by estimating $\| \nabla_{\disc_m}u_m \|_{L^p(\O)^d}$. In \eqref{gsnlsign}, set $u:=u_{m}$, and $v:=v_{m}$ a generic element
in $\cK_{\mathcal{D}_{m}}$. By using the H\"older inequality and due to the coercivity assumption \eqref{asm:coer}, it follows that
\[
\begin{aligned}
\underline{a}\|\nabla_{\mathcal{D}_{m}}u_m\|_{L^{p}(\Omega)^{d}}^{p}
\leq{}&\int_\O \bfa(x,\Pi_{\disc_{m}}u_m,\nabla_{\disc_{m}}u_m)
\cdot\nabla_{\disc_{m}}u_m \ud x\\
\leq{}& \|f\|_{L^{p'}(\O)}\|\Pi_{\disc_{m}}(u_m-v_m)\|_{L^{p}(\O)}\\
&+\|\bfa(x,\Pi_{\disc_{m}}u_m,\nabla_{\disc_{m}}u_m)\|_{L^{p'}(\O)^d}\|\nabla_{\mathcal{D}_{m}} v_{m}\|_{L^{p}(\Omega)^{d}}.
\end{aligned}
\]
Since $u_{m}-v_{m}$ is an element in $X_{\disc_{m},\Gamma_{2,3}}$, applying the coercivity property (see Definition \ref{def:nlsigncoer}) gives $C_p$ not depending on $m$ such that $\|\Pi_{\disc_{m}}(u_m-v_m)\|_{L^{p}(\O)}
\le C_p\|\nabla_{\disc_{m}}(u_m-v_m)\|_{L^{p}(\O)^d}$. Thus, using the growth
assumption \eqref{asm:grow},
\[
\begin{aligned}
\underline{a}\|\nabla_{\mathcal{D}_{m}}u_m\|_{L^{p}(\Omega)^{d}}^{p}
\leq{}&C_p\|f\|_{L^{p'}(\O)}\left(\|\nabla_{\mathcal{D}_{m}} u_m\|_{L^p(\Omega)^{d}}+\|\nabla_{\mathcal{D}_{m}} v_{m}\|_{L^{p}(\Omega)^{d}}\right)\\
 &+\left(\|\overline{a}\|_{L^{p'}(\O)^d}+\mu \|\nabla_{\disc_{m}}u_m\|_{L^p(\O)^d}^{p-1}\right)
 \|\nabla_{\disc_{m}}v_m\|_{L^p(\O)^d}.
\end{aligned}
\]
Applying Young's inequality to this relation shows that
\begin{equation}\label{final.est.um}
\|\nabla_{\mathcal{D}_{m}}u_m\|_{L^{p}(\Omega)^{d}}^{p}
\leq C_1 \left(\|\nabla_{\disc_m}v_m\|_{L^p(\O)^d}^p
+\|f\|_{L^{p'}(\O)}^{p'}+\|\overline{a}\|_{L^{p'}(\O)^d}^{p'}\right)
\end{equation}
where $C_1$ does not depend on $m$. Let us now define, for $\varphi \in \cK$,
an element $P_{\disc_m} \varphi$ of $\cK_{{\disc_m}}$ by
\begin{equation}
P_{\disc_m}\varphi  ={ \displaystyle \argminB_{v \in \cK_{\disc_m}
}(\| \Pi_{\disc_m} v - \varphi \|_{L^{p}(\Omega)} + \| \nabla_{\disc_m} v - \nabla \varphi \|_{L^{p}(\Omega)^{d}})}.
\label{cons.funct}
\end{equation}
We have
\[
S_{\disc_m}(\varphi)=\| \Pi_{\disc_m} (P_{\disc_m}\varphi) - \varphi \|_{L^{p}(\Omega)} + 
\| \nabla_{\disc_m} (P_{\disc_m}\varphi) - \nabla \varphi \|_{L^{p}(\Omega)^{d}}.
\]
Set $v_m:=P_{\disc_m}\varphi$ in \eqref{final.est.um}.
By the triangle inequality
\[
\|\nabla_{\disc_m}v_m\|_{L^p(\O)^d}\leq S_{\disc_m}(\varphi)+\|\nabla\varphi\|_{L^p(\O)^d},
\]
and the GD-consistency of $\disc_m$ shows that $\|\nabla_{\disc_m}v_m\|_{L^p(\O)^d}$ is bounded.
Used in \eqref{final.est.um}, this proves that $\|\nabla_{\disc_m}u_m\|_{L^p(\O)^d}$ remains bounded.


Now, using \eqref{newcons}, \cite[Lemma 3.48]{S1} (slightly adjusted to the fact that the limit-conformity
involves here functions such that $\bpsi\cdot\bfn=0$ on $\Gamma_2$,
see \eqref{conformityseepage}) asserts the existence of $\bar u \in W^{1,p}(\Omega)$ and a subsequence, still denoted by $(\mathcal{D}_{m})_{m \in \mathbb{N}}$, such that $\gamma\bar u=g$ on $\Gamma_1$, $\Pi_{\mathcal{D}_{m}}u_{m}$ converges weakly to $\bar u$ in $L^{p}(\Omega)$, $\nabla_{\mathcal{D}_{m}}u_{m}$ converges weakly to $\nabla\bar u$ in $L^{p}(\Omega)^{d}$, and $\mathbb{T}_{\disc_m}u_m$ converges
weakly to $\gamma\bar u$ in $L^p(\Gamma_{3})$. 
Since $u_m \in \cK_{\mathcal{D}_{m}}$, 
we have $\mathbb T_{\disc_m}u_m \leq a$ on $\Gamma_3$, which implies $\gamma\bar u \leq a$
on $\Gamma_3$. In other words, $\bar u$ belongs to $\cK$. 
By the compactness hypothesis, the convergence of $\Pi_{\mathcal{D}_{m}}u_{m}$ to $\bar u$ is actually strong in $L^{p}(\Omega)$. Up to another subsequence, we can therefore assume that this
convergence holds almost everywhere on $\O$.

To complete this step, it remains to show that $\bar u$ is a solution to \eqref{wenlsign}. We
use the Minty trick. From assumption \eqref{asm:grow}, the sequence $\mathcal{A}_{\disc_m}=\bfa(x, \Pi_{\disc_{m}} u_m, \nabla_{\disc_{m}}u_m)$ is bounded in $L^{p'}(\O)^d$ and converges weakly up to a subsequence to some $\mathcal A$ in $L^{p'}(\O)^d$. Owing to the GD-consistency of the gradient discretisations, for all $\varphi \in \cK$ we have $\Pi_{\mathcal{D}_{m}}(P_{\mathcal{D}_{m}}\varphi) \to \varphi$ strongly in $L^{p}(\Omega)$ and $\nabla_{\mathcal{D}_{m}}(P_{\mathcal{D}_{m}}\varphi) \to \nabla\varphi$ strongly in $L^{p}(\Omega)^{d}$. Taking $v:=P_{\disc_{m}}\varphi$ as a test function in the gradient scheme \eqref{gsnlsign} and passing to the superior limit gives
\[
\begin{aligned}
\limsup_{m \rightarrow \infty} &\int_\O \bfa(x,\Pi_{\disc_{m}}u_m, \nabla_{\disc_{m}}u_m)\cdot\nabla_{\disc_{m}}u_m \ud x\\
\le{}&\limsup_{m \rightarrow \infty}\left(\int_\O f(\Pi_{\disc_m}u_m-\Pi_{\disc_m}P_{\disc_m}\varphi)\ud x + \int_\O \mathcal A_{\disc_m}\cdot \nabla_{\disc_m}P_{\disc_m}\varphi\ud x\right)\\
\leq{}&\int_\O f(\bar u -\varphi) \ud x
+ \int_\O \mathcal A\cdot\nabla\varphi \ud x, \quad \forall \varphi \in \mathcal{K}.
\end{aligned}
\]
Choosing $\varphi=\bar u$, yields
\begin{equation}\label{eq:conv1}
\limsup_{m \rightarrow \infty} \int_\O \bfa(x,\Pi_{\disc_{m}}u_m, \nabla_{\disc_{m}}u_m)\cdot\nabla_{\disc_{m}}u_m \ud x
\leq \int_\O \mathcal A\cdot\nabla\bar u \ud x.
\end{equation}
Using the monotonicity assumption \eqref{asm:mont}, one writes, for $\bG\in L^p(\O)^d$,
\begin{align}
\liminf_{m \rightarrow \infty}\Big[&\int_\O \bfa(x,\Pi_{\disc_{m}}u_m, \nabla_{\disc_{m}}u_m)\cdot\nabla_{\disc_{m}}u_m \ud x
- \int_\O \bfa(x,\Pi_{\disc_{m}}u_m, \nabla_{\disc_{m}}u_m)\cdot \bG \ud x\nonumber\\
&-\int_\O \bfa(x,\Pi_{\disc_{m}}u_m, \bG)\cdot\nabla_{\disc_{m}}u_m \ud x 
+\int_\O \bfa(x,\Pi_{\disc_{m}}u_m, \bG)\cdot \bG 
\ud x \Big]\nonumber\\
={}&\liminf_{m \rightarrow \infty}\int_\O \Big[\bfa(x,\Pi_{\disc_{m}}u_m, \nabla_{\disc_{m}}u_m)
-\bfa(x,\Pi_{\disc_{m}}u_m, \bG)\Big]\cdot \Big[\nabla_{\disc_{m}}u_m - \bG\Big] 
\ud x \nonumber\\
\geq{}& 0.
\label{minty.1}
\end{align}
The a.e.\ convergence of $\Pi_{\disc_m}u_m$, the growth property \eqref{asm:grow}
and the dominated convergence theorem show that $\bfa(x,\Pi_{\disc_m}u_m,\bG)
\to\bfa(x,\bar u,\bG)$ strongly in $L^{p'}(\O)^d$. Hence, passing to the limit in \eqref{minty.1},
\begin{align}\label{eq:conv5}
\liminf_{m \rightarrow \infty}&\int_\O \bfa(x,\Pi_{\disc_{m}}u_m, \nabla_{\disc_{m}}u_m)\cdot\nabla_{\disc_{m}}u_m \ud x 
- \int_\O \mathcal A\cdot\bG \ud x \nonumber\\
&-\int_\O \bfa(x,\bar u, \bG)\cdot\nabla\bar u \ud x 
+\int_\O \bfa(x, \bar u,\bG)\cdot \bG 
\ud x \geq 0.
\end{align}
Combining this inequality with \eqref{eq:conv1} yields
\begin{align*}
\int_\O \mathcal A\cdot\nabla \bar u \ud x 
-\int_\O \mathcal A\cdot \bG \ud x 
- \int_\O \bfa(x,\bar u, \bG)\cdot\nabla \bar u \ud x
+\int_\O \bfa(x,\bar u, \bG)\cdot \bG \ud x
\geq 0.
\end{align*}
Take $\bvarphi \in C_{c}^{\infty}(\O)^d$ and $\alpha >0$. Putting $\bG=\nabla\bar u + \alpha \bvarphi$ and dividing by $\alpha$, one obtains
\begin{equation*}
-\int_\O (\mathcal A-\bfa(x,\bar u, \nabla\bar u+\alpha\bvarphi))\cdot \bvarphi \ud x \geq 0, \; \forall \bvarphi \in C_{c}^{\infty}(\O)^d,\; \forall \alpha  >0.
\end{equation*}
Letting $\alpha \rightarrow 0$ and applying the dominated convergence theorem yields
\begin{equation*}
-\int_\O (\mathcal A-\bfa(x,\bar u, \nabla\bar u))\cdot \bvarphi \ud x \geq 0, \; \forall \bvarphi \in C_{c}^{\infty}(\O)^d.
\end{equation*}
Applied
to $-\bvarphi$ instead of $\bvarphi$, this leads to
\begin{equation*}
-\int_\O (\mathcal A-\bfa(x,\bar u, \nabla\bar u))\cdot \bvarphi \ud x=0, \; \forall \bvarphi \in C_{c}^{\infty}(\O)^d,
\end{equation*}
which implies that
\begin{equation}\label{eq:obs2}
\mathcal A=\bfa(x,\bar u, \nabla\bar u) \mbox{ a.e.\ on $\O$}.
\end{equation}
Setting $\bG=\nabla\bar u$ in \eqref{eq:conv5}, it follows that
\begin{equation}\label{eq:conv3}
\int_\O \bfa(x,\bar u, \nabla\bar u)\cdot \nabla\bar u \ud x 
\leq \liminf_{m \rightarrow \infty}\int_\O \bfa(x,\Pi_{\disc_{m}}u_m, \nabla_{\disc_{m}}u_m)\cdot \nabla_{\disc_{m}}u_m
 \ud x,
 \end{equation}
which gives, since $u_m$ is a solution to the gradient scheme \eqref{gsnlsign}, for all $\varphi \in \cK$,
\begin{multline*}
\int_\O \bfa(x,\bar u, \nabla\bar u)\cdot \nabla\bar u \ud x\\
\leq \liminf_{m \longrightarrow \infty}\Big[
\int_{\Omega}f \Pi_{\mathcal{D}_{m}} (u_m - P_{\mathcal{D}_{m}}\varphi) \ud x+
\int_{\Omega}\bfa    (x,\Pi_{\disc_{m}}u_m, \nabla_{\disc_{m}}u_m)
\cdot \nabla_{\mathcal{D}_{m}} (P_{\mathcal{D}_{m}}\varphi)\ud x\Big].
\end{multline*}
Using \eqref{eq:obs2} and the strong convergence of $\nabla_{\mathcal{D}_{m}} (P_{\mathcal{D}_{m}}\varphi)$ to $\nabla\varphi$ yields
\begin{eqnarray*}
\displaystyle{\int_\O \bfa(x,\bar u, \nabla\bar u)\cdot \nabla\bar u \ud x}
\leq\displaystyle\int_{\Omega}f (\bar u - \varphi) \ud x
+\int_{\Omega}\bfa(x,\bar u, \nabla\bar u)\cdot \nabla\varphi\ud x.
\end{eqnarray*}
This shows that $\bar u$ is a solution to \eqref{wenlsign}.   

\textbf{Step 3}: strong convergence of the gradients, if $\bfa$ is strictly monotonic.

Owing to \eqref{eq:conv1} and \eqref{eq:obs2},
\begin{equation}
\label{eq:conv6}
\limsup_{m \rightarrow \infty} \int_\O \bfa(x,\Pi_{\disc_{m}}u_m, \nabla_{\disc_{m}}u_m)\cdot\nabla_{\disc_{m}}u_m \ud x
\leq \int_\O \bfa(x,\bar u, \nabla\bar u)\cdot\nabla\bar u \ud x.
\end{equation}
Together with \eqref{eq:conv3}, we conclude that
\begin{equation}\label{eq:conv4}
\lim_{m \rightarrow \infty} \int_\O \bfa(x,\Pi_{\disc_{m}}u_m, \nabla_{\disc_{m}}u_m)\cdot\nabla_{\disc_{m}}u_m \ud x
= \int_\O \bfa(x,\bar u, \nabla\bar u)\cdot\nabla\bar u \ud x.
\end{equation}
The remaining reasoning to obtain the strong convergence of $\nabla_{\disc_m}u_m$ is exactly like in \cite{B1}. For the sake of completeness, we recall it. Equality \eqref{eq:conv4} leads to
\begin{eqnarray*}
\lim_{m \rightarrow \infty} \int_\O (\bfa(x,\Pi_{\disc_{m}}u_m, \nabla_{\disc_{m}}u_m)-\bfa(x,\bar u, \nabla\bar u))
\cdot(\nabla_{\disc_{m}}u_m - \nabla\bar u) \ud x =0.
\end{eqnarray*}
Making use of the fact that $(\bfa(x,\Pi_{\disc_{m}}u_m, \nabla_{\disc_{m}}u_m)-\bfa(x,\bar u, \nabla\bar u))
\cdot(\nabla_{\disc_{m}}u_m - \nabla\bar u) \geq 0$ a.e.\ on $\O$, we deduce that
\begin{eqnarray*}
(\bfa(x,\Pi_{\disc_{m}}u_m, \nabla_{\disc_{m}}u_m)-\bfa(x,\bar u, \nabla\bar u))
\cdot(\nabla_{\disc_{m}}u_m - \nabla\bar u) \rightarrow 0 \mbox{ in } L^1(\O).
\end{eqnarray*}
Up a subsequence, the convergence holds almost everywhere. The strict monotoni\-city assumption \eqref{asm:smont} and \cite[Lemma 3.2]{B1} yield $\nabla_{\disc_{m}}u_m \rightarrow \nabla \bar u$ a.e.\ as $m \rightarrow \infty$. Furthermore, as a consequence, $\bfa(x,\Pi_{\disc_{m}}u_m, \nabla_{\disc_{m}}u_m)\cdot \nabla_{\disc_{m}}u_m \rightarrow \bfa(x,\bar u, \nabla\bar u)\cdot \nabla\bar u$ a.e. Since $\bfa(x,\Pi_{\disc_{m}}u_m, \nabla_{\disc_{m}}u_m)\cdot \nabla_{\disc_{m}}u_m \geq 0$, and taking into account  \eqref{eq:conv4}, \cite[Lemma 3.3]{B1} gives the strong convergence of $\bfa(x,\Pi_{\disc_{m}}u_m, \nabla_{\disc_{m}}u_m)\cdot \nabla_{\disc_{m}}u_m$ to $\bfa(x,\bar u, \nabla\bar u)\cdot \nabla\bar u$ in $L^1(\O)$ as $m \rightarrow \infty$. As a consequence of this $L^1$-convergence, we obtain the equi-integrability of the sequence of functions $\bfa(x,\Pi_{\disc_{m}}u_m, \nabla_{\disc_{m}}u_m)\cdot \nabla_{\disc_{m}}u_m$. This provides, with \eqref{asm:coer}, the equi-integrability of $(|\nabla_{\disc_{m}}u_m|^p)_{m\in \NN}$. The strong convergence of $\nabla_{\disc_{m}}u_m$ to $\nabla\bar u$ in $L^p(\O)$ is then directly implied by the Vitali theorem.

\end{proof}



\section{Obstacle problem and generalised Bulkley fluid models}\label{sec:obs}

\subsection{Continuous problems}

\subsubsection{Obstacle problem}

We are concerned here with other kinds of variational inequalities. The first one is an obstacle model, in which the inequalities are imposed inside the domain $\O$. It is formulated as
\begin{subequations}\label{nlobs}
\begin{align}
(\div{\bf a}(x,\bar u, \nabla\bar u)+f)(\psi-\bar{u}) &= 0 \mbox{\quad in $\Omega$,} \label{nlobs1}\\
-\div{\bf a}(x,\bar u, \nabla\bar u) &\leq f \mbox{\quad in $\Omega$,} \label{nlobs2}\\
\bar{u}&\leq \psi \mbox{\quad in $\Omega$,} \label{nlobs3}\\
\bar{u} &= h \mbox{\quad on $\partial\Omega$.} \label{nlobs4}
\end{align}
\end{subequations}
Let us provide the assumptions on the data of this model.
\begin{assumptions}\label{hyp-nl-obs}
\begin{enumerate}
\item the operator $\bfa$ and the domain $\O$ satisfy the same pro\-perties as in Assumption \ref{hip:nosign},
\item the function $f$ belongs to $L^{p'}(\O)$, the boundary function $h$ is in $W^{1-\frac{1}{p},p}(\partial
\O)$ and the obstacle function $\psi$ belongs to $L^p(\O)$,
\item the closed convex set $\mathcal{K}:=\{ v \in W^{1,p}(\Omega)\; : \; v \leq \psi\; \mbox{in}\; \Omega,\; \gamma(v)=h \mbox{ on } \partial\O\}$ is non-empty.
\end{enumerate}
\end{assumptions}
The weak formulation of the obstacle problem \eqref{nlobs} is
\begin{equation}\label{wenlobs}
\left\{
\begin{array}{ll}
\dsp \mbox{Find } \bar{u} \in \mathcal{K}\mbox{ such that, $\forall v\in \mathcal{K}$,}\\
\dsp\int_\O \bfa(x,\bar u,\nabla\bar u) \cdot \nabla(\bar{u}-v)\ud x
\leq \int_\O f(\bar{u}-v)\ud x.
\end{array}
\right.
\end{equation}

\subsubsection{Generalised Bulkley model}

The second problem is called the Bulkley model, whose weak formulation is given by 
\begin{equation}\label{webulk}
\left\{
\begin{array}{ll}
\mbox{Find}\; \bar{u} \in W_{0}^{1,p}(\Omega) \mbox{ such that, for all  } v \in W_{0}^{1,p}(\Omega),
\\
\dsp\int_\O \bfa(x,\bar u,\nabla\bar u)\cdot \nabla(\bar{u}-v)\ud x
+\int_\O |\nabla\bar u| \ud x - \int_\O |\nabla v| \ud x \\
\qquad\leq \dsp\int_\O f(\bar{u}-v)\ud x.
\end{array}
\right.
\end{equation}
Here the operator $\bfa$ is assumed to satisfy \eqref{asm:carth}--\eqref{asm:mont} and the domain $\O$ has a Lipschitz boundary.
Models considered in the removal of materials from a duct by using fluids \cite{A3} are included in \eqref{webulk} by setting $\bfa(x,\bar u,\nabla\bar u)=|\nabla\bar u|^{p-2}\nabla\bar u$. 

\medskip

As for the Signorini problem, \cite[Theorem 8.2, Chap. 2]{NB-1} yields the existence of a solution to each of the problems \eqref{wenlobs} and \eqref{webulk}.



\subsection{Discrete problems}\label{sec:disc-pbls-obs}

\subsubsection{Obstacle problem}

Let us recall the definition of a gradient discretisation for non-homogeneous Dirichlet
boundary conditions \cite{S1}.

\begin{definition}[GD for non-homogeneous Dirichlet boundary conditions]\label{def:gdnlobs} A gradient discretisation $\mathcal{D}$ for non-homogeneous Dirichlet boundary conditions is defined by $\mathcal{D}=(X_\disc, \Pi_\disc,\mathcal{I}_{\disc,\partial\O},\nabla_{\mathcal{D}})$, where:
\begin{enumerate}
\item the set of discrete unknowns $X_\disc=X_{\disc,0} \oplus X_{\disc,\partial\O}$ is a direct sum of two finite dimensional spaces on $\RR$, representing respectively the interior degrees of freedom and the boundary degrees of freedom,
\item the linear mapping $\Pi_{\mathcal{D}} : X_\disc \rightarrow L^{p}(\varOmega)$ provides the reconstructed function,
\item the linear mapping $\cI_{\disc,\partial\O} : W^{1-\frac{1}{p},p}(\partial\O) \rightarrow X_{\disc,\partial\O}$ provides an interpolation operator for the trace of functions in $W^{1,p}(\O)$,
\item the linear mapping $\nabla_{\mathcal{D}} : X_\disc \rightarrow L^{p}(\varOmega)^{d}$ gives a reconstructed gradient, which must be defined such that $\| \nabla_\disc \cdot \|_{L^p(\O)^d}$ is a norm on $X_{\disc,0}$.
\end{enumerate}
\end{definition}

\begin{definition}[GS for the non-linear obstacle problem] Let $\mathcal{D}$ be a gradient discretisation in the sense of Definition \ref{def:gdnlobs}. The corresponding gradient scheme for (\ref{wenlobs}) is given by
\begin{equation}\label{gsnlobs}
\left\{
\begin{array}{ll}
\dsp \mbox{Find}\; u \in \mathcal{K}_{\mathcal{D}}:=\{ v \in X_{\mathcal{D},0}+\cI_{\disc,\partial\O}h\; : \; \Pi_{\mathcal{D}} v \leq \psi\; \mbox{in}\; \Omega\}\; 
\mbox{ s.t., $\forall v\in \mathcal{K}_{\mathcal{D}}$},\\
\dsp\int_\O {\bf a}(x,\Pi_\disc u, \nabla_\disc u) \cdot \nabla_{\mathcal{D}}(u-v)\ud x
\leq \int_\O f\Pi_{\mathcal{D}}{(u-v)}\ud x.
\end{array}
\right.
\end{equation}
\end{definition}

\subsubsection{Generalised Bulkley model}

\begin{definition}[GD for homogeneous Dirichlet boundary conditions]
\label{def:gdbulk}
A gradient discretisation $\mathcal{D}$ for homogeneous Dirichlet boundary conditions is defined by $\mathcal{D}=(X_{\disc,0}, \Pi_\disc,\nabla_{\mathcal{D}})$, where $X_{\disc,0}$ is a finite dimensional vector space over $\RR$, taking into account the zero boundary condition, and $\Pi_\disc$ and $\nabla_\disc$ are as in Definition \ref{def:gdnlobs} but defined on $X_{\disc,0}$.
\end{definition}

\begin{definition}[GS for the Bulkley model] Let $\mathcal{D}$ be a gradient discretisation in the sense of Definition \ref{def:gdbulk}. The corresponding gradient scheme for \eqref{webulk} is given by
\begin{equation}\label{gsbulk}
\left\{
\begin{array}{ll}
\mbox{Find}\; \bar{u} \in X_{\disc,0} \mbox{ such that for all  } v \in X_{\disc,0},
\\
\dsp\int_\O \bfa(x,\Pi_\disc u,\nabla_\disc u)\cdot \nabla_\disc(u-v)\ud x
+\int_\O |\nabla_\disc u | \ud x - \int_\O |\nabla_\disc v| \ud x \\
\qquad\leq \dsp\int_\O f\Pi_\disc(u-v)\ud x.
\end{array}
\right.
\end{equation}

\end{definition}


\subsubsection{Properties of GDs}

Except for the restriction to the convex sets $\cK$ and $\cK_\disc$ in the
GD-consistency, all the properties of GDs required for the convergence analysis
of the GDM on the non-linear obstacle and Bulkley models are similar
to the corresponding ones for GDs adapted to PDEs \cite{B1,S1}.

\begin{definition}[Coercivity]\label{def:obscoercivity}
If $\disc$ is a gradient discretisation in the sense of Definition \ref{def:gdnlobs} or Definition \ref{def:gdbulk}, define
\begin{equation}
C_{\mathcal{D}} =  {\displaystyle \max_{v \in X_{\mathcal{D},0}\setminus\{0\}}\frac{\|\Pi_{\mathcal{D}}v\|_{L^{p}(\Omega)}}{\|\nabla_{\mathcal{D}} v\|_{L^{p}(\Omega)^{d}}}}.
\label{coercivityobs}
\end{equation} 
A sequence $(\mathcal{D}_{m})_{m \in \mathbb{N}}$ of such gradient discretisations is \emph{coercive} if $(C_{\mathcal D_m})_{m\in\NN}$ remains bounded.
\end{definition}
\begin{definition}[GD-Consistency]\label{def:obsconst}
If $\disc$ is a gradient discretisation in the sense of Definition \ref{def:gdnlobs}, let
$S_{\mathcal{D}} : \cK \to [0, +\infty)$ be defined by
\begin{equation}
\forall \varphi \in \mathcal{K}, \; S_{\mathcal{D}}(\varphi)=
\min_{v \in \cK_{\disc}}\left( \| \Pi_{\mathcal{D}} v - \varphi \|_{L^{p}(\Omega)} 
+ \| \nabla_{\mathcal{D}} v - \nabla \varphi \|_{L^{p}(\Omega)^{d}}\right).
\label{consistencyobs}
\end{equation}
If $\disc$ is a gradient discretisation in the sense of Definition \ref{def:gdbulk},
$S_\disc$ is defined the same way with $(\cK,\cK_\disc)$ replaced by $(W^{1,p}_0(\O),
X_{\disc,0})$.

A sequence $(\mathcal{D}_{m})_{m \in \mathbb{N}}$ of such gradient discretisations is \emph{GD-consistent} if for all $\varphi \in \cK$, ${\lim_{m \rightarrow \infty}S_{\mathcal{D}_m}(\varphi)=0}$.
\end{definition}
\begin{definition}[Limit-conformity]\label{def:obslimconf}
If $\disc$ is a gradient discretisation in the sense of Definition \ref{def:gdnlobs} or Definition \ref{def:gdbulk}, define
$W_{\mathcal{D}} : C^2(\overline \O)^d \to [0, +\infty)$ by
\begin{equation}
\begin{aligned}
&\forall \bpsi \in C^2(\overline{\O})^d,\\
&W_{\mathcal{D}}(\bpsi)=
 \sup_{v\in X_{\mathcal{D},0}\setminus \{0\}}\frac{1}{\|\nabla_{\mathcal{D}} v\|_{L^{p}(\Omega)^{d}}} \Big|\int_{\Omega}(\nabla_{\mathcal{D}}v\cdot \bpsi + \Pi_{\mathcal{D}}v \div (\bpsi)) \ud x
\Big|.
\end{aligned}
\label{conformityobs}
\end{equation}
A sequence $(\mathcal{D}_{m})_{m \in \mathbb{N}}$ of such gradient discretisations
is \emph{limit-conforming} if for all $\psi \in  C^2(\overline{\O})^d$, ${\lim_{m \rightarrow \infty}W_{\mathcal{D}_m}(\psi)=0}$.

\end{definition}
Finally, Definition \ref{def:comnlobs} (compactness) remains the same for gradient discretisations in the sense of Definition \ref{def:gdnlobs} or Definition \ref{def:gdbulk},
with $\cK_{\disc_m}$ replaced by $X_{\disc_m,0}$ in the latter case.



\subsection{Convergence results}

The following two theorems state the convergence properties of the GDM
for the non-linear obstacle problem and the Bulkley model. Note that for quasi-linear operators (that is, $\bfa(x,\bar u,\nabla \bar u)=\Lambda(x,\bar u)\nabla\bar u$),
the convergence of the GDM for the obstacle problem was established in \cite{Y}.

\begin{theorem}[Convergence of the GDM, non-linear obstacle problem]\label{thm:convnlobs}~\\
Un\-der Assumptions \ref{hyp-nl-obs}, let $(\mathcal{D}_{m})_{m \in \mathbb{N}}$ be a sequence of gradient discretisations in the sense of Definition \ref{def:gdnlobs}, such that $(\disc_m)_{m\in\NN}$ is coercive, GD-consistent, limit-conforming and compact, and such that $\cK_{\mathcal{D}_{m}}$ is a non-empty set for any $m$.

Then, for any $m \in \mathbb{N}$, the gradient scheme \eqref{gsnlobs} has at least one solution $u_{m} \in \cK_{\mathcal{D}_{m}}$ and, up to a subsequence as $m\to\infty$, $\Pi_{\mathcal{D}_{m}}u_{m}$ converges strongly in $L^{p}(\O)$ to a weak solution $\bar{u}$ of Problem \eqref{wenlobs} and $\nabla_{\disc_m}u_m$ converges weakly in $L^p(\O)^d$ to $\nabla\bar u$. 

If the strict monotonicity \eqref{asm:smont} is assumed, then $\nabla_{\mathcal{D}_{m}}u_{m}$ converges strongly in $L^{p}(\O)^d$ to $\nabla \bar{u}$.
\end{theorem}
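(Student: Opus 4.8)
The plan is to reproduce, almost verbatim, the three-step argument used in the proof of Theorem~\ref{thm:convnlsign}, since the obstacle problem has exactly the same structure as the Signorini problem; the only genuine differences are that the constraint $\Pi_\disc v\le\psi$ is now imposed inside $\O$ rather than on a piece of the boundary, that the boundary condition is a full nonhomogeneous Dirichlet one, and that the gradient discretisation of Definition~\ref{def:gdnlobs} carries no trace-reconstruction operator, so that no analogue of \eqref{newcons} is required. For \emph{Step~1} (existence for the scheme), fix $m$, pick a lifting $\widetilde h\in\cK$ of $h$ and set $h_\disc=\argminB_{v\in\cK_\disc}(\|\Pi_\disc v-\widetilde h\|_{L^p(\O)}+\|\nabla_\disc v-\nabla\widetilde h\|_{L^p(\O)^d})$. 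Writing $\widehat u=u-h_\disc$ and $\widehat v=v-h_\disc$ turns \eqref{gsnlobs} into a finite-dimensional elliptic variational inequality $\langle\mathcal A_\disc(\widehat u),\widehat u-\widehat v\rangle\le\ell(\widehat u-\widehat v)$ posed on the nonempty closed convex set $\cK_\disc-h_\disc\subset X_{\disc,0}$, where $\mathcal A_\disc$ is an operator of the calculus of variations (immediately, since $X_{\disc,0}$ is finite-dimensional) by \eqref{asm:carth}--\eqref{asm:coer}; \cite[Theorem~8.2, Chap.~2]{NB-1} then yields a solution $u_m\in\cK_{\disc_m}$.

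For \emph{Step~2} (a priori estimate and compactness), test \eqref{gsnlobs} with a generic $v_m\in\cK_{\disc_m}$ and combine \eqref{asm:coer}, H\"older's inequality, the coercivity constant $C_{\disc_m}$, the growth bound \eqref{asm:grow} and Young's inequality to get, exactly as for \eqref{final.est.um}, $\|\nabla_{\disc_m}u_m\|_{L^p(\O)^d}^p\le C(\|\nabla_{\disc_m}v_m\|_{L^p(\O)^d}^p+\|f\|_{L^{p'}(\O)}^{p'}+\|\overline a\|_{L^{p'}(\O)^d}^{p'})$ with $C$ independent of $m$. Choosing $v_m=P_{\disc_m}\varphi_0$, where $P_{\disc_m}\varphi_0$ denotes a minimiser of the consistency functional $S_{\disc_m}(\varphi_0)$ of Definition~\ref{def:obsconst} for some fixed $\varphi_0\in\cK$, bounds the right-hand side, so $\|\nabla_{\disc_m}u_m\|_{L^p(\O)^d}$ is bounded. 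Since $u_m-P_{\disc_m}\varphi_0\in X_{\disc_m,0}$ and $\Pi_{\disc_m}P_{\disc_m}\varphi_0\to\varphi_0$, $\nabla_{\disc_m}P_{\disc_m}\varphi_0\to\nabla\varphi_0$ strongly by GD-consistency, the standard GDM compactness and limit-conformity results for (homogeneous) Dirichlet conditions of \cite{S1} apply to $u_m-P_{\disc_m}\varphi_0$ and provide $\bar u\in W^{1,p}(\O)$ with $\gamma\bar u=h$ and a subsequence along which $\Pi_{\disc_m}u_m\rightharpoonup\bar u$ weakly in $L^p(\O)$ and $\nabla_{\disc_m}u_m\rightharpoonup\nabla\bar u$ weakly in $L^p(\O)^d$; the compactness hypothesis upgrades the first convergence to strong in $L^p(\O)$, hence a.e.\ on $\O$ up to a further subsequence. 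As $\Pi_{\disc_m}u_m\le\psi$ a.e.\ (because $u_m\in\cK_{\disc_m}$), passing to the a.e.\ limit gives $\bar u\le\psi$, so $\bar u\in\cK$.

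\emph{Step~3} (Minty argument and strong gradient convergence) is then verbatim Steps~2 and 3 of the proof of Theorem~\ref{thm:convnlsign}: using $P_{\disc_m}\varphi\in\cK_{\disc_m}$ as a test function (admissible precisely because $\varphi\le\psi$ when $\varphi\in\cK$) and the strong convergences $\Pi_{\disc_m}P_{\disc_m}\varphi\to\varphi$, $\nabla_{\disc_m}P_{\disc_m}\varphi\to\nabla\varphi$, pass to the superior limit in \eqref{gsnlobs} to obtain $\limsup_m\int_\O\bfa(x,\Pi_{\disc_m}u_m,\nabla_{\disc_m}u_m)\cdot\nabla_{\disc_m}u_m\ud x\le\int_\O\mathcal A\cdot\nabla\bar u\ud x$, where $\mathcal A$ is the weak $L^{p'}(\O)^d$-limit of $\bfa(x,\Pi_{\disc_m}u_m,\nabla_{\disc_m}u_m)$; then apply the monotonicity \eqref{asm:mont} with $\bG=\nabla\bar u+\alpha\bvarphi$, $\bvarphi\in C_c^\infty(\O)^d$, use $\bfa(x,\Pi_{\disc_m}u_m,\bG)\to\bfa(x,\bar u,\bG)$ in $L^{p'}(\O)^d$ (growth plus dominated convergence) and let $\alpha\to0$ to identify $\mathcal A=\bfa(x,\bar u,\nabla\bar u)$ a.e., and re-insert this into the scheme to conclude that $\bar u$ solves \eqref{wenlobs}. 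Under \eqref{asm:smont} the same manipulations give $\lim_m\int_\O\bfa(x,\Pi_{\disc_m}u_m,\nabla_{\disc_m}u_m)\cdot\nabla_{\disc_m}u_m\ud x=\int_\O\bfa(x,\bar u,\nabla\bar u)\cdot\nabla\bar u\ud x$, and \cite[Lemmas~3.2 and 3.3]{B1} with Vitali's theorem (using \eqref{asm:coer} for equi-integrability of $|\nabla_{\disc_m}u_m|^p$) upgrade the gradient convergence to strong in $L^p(\O)^d$.

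I do not expect any essentially new obstacle compared with the Signorini case: the technical heart is the Minty monotonicity argument, which is unchanged. The only points needing a little care are the treatment of the nonhomogeneous Dirichlet trace of $\bar u$ --- handled by subtracting the discrete lifting $P_{\disc_m}\varphi_0$ to fall back on the homogeneous-Dirichlet compactness and limit-conformity lemmas of \cite{S1} --- and the stability of the obstacle constraint under the a.e.\ limit, which is what guarantees both $\bar u\in\cK$ and the admissibility of every constrained test function $P_{\disc_m}\varphi$, $\varphi\in\cK$, in the scheme.
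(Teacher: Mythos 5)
Your proposal is correct and follows exactly the route the paper intends: the authors omit this proof, stating it is "extremely similar" to that of Theorem \ref{thm:convnlsign}, and your writeup is precisely that adaptation (existence via the lifted finite-dimensional VI, the a priori gradient bound with $v_m=P_{\disc_m}\varphi_0$, weak/strong compactness, and the Minty trick). Your observation that subtracting $P_{\disc_m}\varphi_0\in\cK_{\disc_m}$ reduces the regularity-of-the-limit step to the homogeneous-Dirichlet lemmas of \cite{S1} -- so that no analogue of \eqref{newcons} is needed here -- correctly accounts for the only structural difference between the two theorems.
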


\begin{theorem}[Convergence of the GDM, Bulkley model]\label{thm:convbulkley}~\\
Un\-der As\-sumptions \eqref{asm:carth}--\eqref{asm:mont} and $f \in L^{p'}(\O)$, let $(\mathcal{D}_{m})_{m \in \mathbb{N}}$ be a sequence of gradient discretisations in the sense of Definition \ref{def:gdbulk}, such that $(\disc_m)_{m\in\NN}$ is coercive, GD-consistent, limit-conforming and compact.

Then, for any $m \in \mathbb{N}$, the gradient scheme \eqref{gsbulk} has at least one solution $u_{m} \in X_{\disc,0}$ and, up to a subsequence as $m\to\infty$, $\Pi_{\mathcal{D}_{m}}u_{m}$ converges strongly in $L^{p}(\Omega)$ to a weak solution $\bar{u}$ of Problem \eqref{webulk} and $\nabla_{\disc_m}u_m$ converges weakly to $\nabla\bar u$ in $L^p(\O)^d$. 

If we also assume that $\bfa$ is strictly monotonic in the sense of \eqref{asm:smont}, then 
$\nabla_{\mathcal{D}_{m}}u_{m}$  converges strongly in $L^{p}(\Omega)^d$ to $\nabla \bar{u}$.  
\end{theorem}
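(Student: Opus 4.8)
The plan is to mirror, almost verbatim, the three-step proof of Theorem~\ref{thm:convnlsign}: the only genuinely new ingredient is the non-smooth convex term $\int_\O|\nabla_\disc\cdot|\ud x$, while the convex-set constraints disappear and the boundary condition becomes homogeneous Dirichlet, which actually simplifies several estimates. \textbf{Step 1 (existence for the scheme).} I would rewrite \eqref{gsbulk} on the finite-dimensional space $X_{\disc,0}$ as a variational inequality of the second kind: find $u\in X_{\disc,0}$ with $\langle\mathcal A_\disc(u),u-v\rangle+\Phi_\disc(u)-\Phi_\disc(v)\le\ell(u-v)$ for all $v\in X_{\disc,0}$, where $\langle\mathcal A_\disc(u),w\rangle=\int_\O\bfa(x,\Pi_\disc u,\nabla_\disc u)\cdot\nabla_\disc w\ud x$, $\Phi_\disc(v)=\int_\O|\nabla_\disc v|\ud x$ is convex, finite and continuous, and $\ell(w)=\int_\O f\Pi_\disc w\ud x$. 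Exactly as in Step~1 of the proof of Theorem~\ref{thm:convnlsign}, assumptions \eqref{asm:carth}, \eqref{asm:grow}, \eqref{asm:coer} and the fact that $\|\nabla_\disc\cdot\|_{L^p}$ is a norm on $X_{\disc,0}$ make $\mathcal A_\disc$ a bounded, continuous and coercive operator; existence of a solution $u_m$ then follows from \cite[Theorem~8.2, Chap.~2]{NB-1}, whose framework accommodates the additional convex functional $\Phi_\disc$.

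\textbf{Step 2 (a priori bound and weak convergence).} Testing \eqref{gsbulk} with $v=0$ and dropping the nonnegative term $\int_\O|\nabla_{\disc_m}u_m|\ud x$, H\"older's inequality together with the coercivity \eqref{asm:coer} of $\bfa$ and the coercivity of $\disc_m$ give $\underline a\|\nabla_{\disc_m}u_m\|_{L^p(\O)^d}^p\le C_{\disc_m}\|f\|_{L^{p'}(\O)}\|\nabla_{\disc_m}u_m\|_{L^p(\O)^d}$, hence $\|\nabla_{\disc_m}u_m\|_{L^p(\O)^d}$ is bounded uniformly in $m$ (here the absence of Dirichlet data makes things easier than in \eqref{final.est.um}). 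From there the reasoning of Step~2 of the proof of Theorem~\ref{thm:convnlsign} applies: $\|\Pi_{\disc_m}u_m\|_{L^p}$ is bounded by coercivity; by limit-conformity and coercivity (using \cite{S1}) there is $\bar u\in W^{1,p}_0(\O)$ such that, along a subsequence, $\Pi_{\disc_m}u_m\rightharpoonup\bar u$ and $\nabla_{\disc_m}u_m\rightharpoonup\nabla\bar u$ weakly in $L^p$; by compactness $\Pi_{\disc_m}u_m\to\bar u$ strongly in $L^p(\O)$ and, up to a further subsequence, a.e.\ on $\O$; and $\bfa(x,\Pi_{\disc_m}u_m,\nabla_{\disc_m}u_m)\rightharpoonup\mathcal A$ weakly in $L^{p'}(\O)^d$ by \eqref{asm:grow}.

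\textbf{Step 3 (identification of the limit and strong convergence).} For $\varphi\in W^{1,p}_0(\O)$ let $P_{\disc_m}\varphi\in X_{\disc_m,0}$ be the GD-consistency interpolant (analogue of \eqref{cons.funct}), so $\Pi_{\disc_m}(P_{\disc_m}\varphi)\to\varphi$ and $\nabla_{\disc_m}(P_{\disc_m}\varphi)\to\nabla\varphi$ strongly, whence $\int_\O|\nabla_{\disc_m}(P_{\disc_m}\varphi)|\ud x\to\int_\O|\nabla\varphi|\ud x$. Testing \eqref{gsbulk} with $v=P_{\disc_m}\varphi$, isolating $\int_\O\bfa(x,\Pi_{\disc_m}u_m,\nabla_{\disc_m}u_m)\cdot\nabla_{\disc_m}u_m\ud x$, passing to $\limsup$ and using that $w\mapsto\int_\O|w|\ud x$ is weakly lower semicontinuous on $L^p(\O)^d$ (so $\limsup_m(-\int_\O|\nabla_{\disc_m}u_m|\ud x)\le-\int_\O|\nabla\bar u|\ud x$), the choice $\varphi=\bar u$ yields the exact analogue of \eqref{eq:conv1}. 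The Minty argument is then word for word that of the proof of Theorem~\ref{thm:convnlsign} (monotonicity \eqref{asm:mont}, a.e.\ convergence of $\Pi_{\disc_m}u_m$, dominated convergence for $\bfa(x,\Pi_{\disc_m}u_m,\bG)$, then $\bG=\nabla\bar u+\alpha\bvarphi$ with $\bvarphi\in C_c^\infty(\O)^d$ and $\alpha\to0$): it gives $\mathcal A=\bfa(x,\bar u,\nabla\bar u)$ a.e.\ and $\int_\O\bfa(x,\bar u,\nabla\bar u)\cdot\nabla\bar u\ud x\le\liminf_m\int_\O\bfa(x,\Pi_{\disc_m}u_m,\nabla_{\disc_m}u_m)\cdot\nabla_{\disc_m}u_m\ud x$. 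To conclude that $\bar u$ solves \eqref{webulk}, I would test \eqref{gsbulk} with $v=P_{\disc_m}v_0$ for an arbitrary $v_0\in W^{1,p}_0(\O)$, keep $\int_\O|\nabla_{\disc_m}u_m|\ud x$ on the left, take $\liminf$, and combine the superadditivity of $\liminf$, the above $\liminf$ bound, the weak lower semicontinuity of $\int_\O|\cdot|\ud x$, the strong convergences of $\Pi_{\disc_m}(P_{\disc_m}v_0)$ and $\nabla_{\disc_m}(P_{\disc_m}v_0)$, and $\mathcal A=\bfa(x,\bar u,\nabla\bar u)$; rearrangement then gives precisely \eqref{webulk}. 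Under \eqref{asm:smont}, combining the $\liminf$ bound with the $\limsup$ bound shows $\lim_m\int_\O\bfa(x,\Pi_{\disc_m}u_m,\nabla_{\disc_m}u_m)\cdot\nabla_{\disc_m}u_m\ud x=\int_\O\bfa(x,\bar u,\nabla\bar u)\cdot\nabla\bar u\ud x$, and the strong $L^p$ convergence of $\nabla_{\disc_m}u_m$ follows exactly as in Step~3 of the proof of Theorem~\ref{thm:convnlsign} (via \cite[Lemmas~3.2 and~3.3]{B1} and Vitali's theorem).

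\textbf{Main obstacle.} The only non-routine points both concern the non-differentiable functional $\int_\O|\nabla_\disc\cdot|\ud x$: checking that \cite[Theorem~8.2, Chap.~2]{NB-1} still applies when this extra convex term replaces a plain monotone operator equation in Step~1, and carefully tracking the direction of the inequality when that term is passed to the limit -- weak lower semicontinuity is used on the ``$\liminf$ side'' to show $\bar u$ is a solution, and with the opposite sign on the ``$\limsup$ side'' when deriving the analogue of \eqref{eq:conv1}. Once these are dealt with, everything else is a transcription of the Signorini proof with the convex-set constraint removed.
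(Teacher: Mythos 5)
Your proposal follows the paper's proof essentially step for step: testing with $v=0$ for the a priori bound, compactness and limit-conformity for extracting the limit $\bar u$, weak lower semicontinuity of $w\mapsto\int_\O|w|\ud x$ (which the paper establishes by duality against $\mathbf{w}\in L^\infty(\O)^d$ with $|\mathbf{w}|\le 1$) to handle the non-smooth term on the correct side of each limit, and the same Minty argument to identify $\mathcal A$ and conclude. The one point you flag as the main obstacle -- whether \cite[Theorem 8.2, Chap.~2]{NB-1} accommodates the extra convex functional -- is resolved in the paper simply by invoking \cite[Theorem 8.5, Chap.~2]{NB-1} instead, which is stated precisely for variational inequalities of the second kind (pseudo-monotone operator plus convex functional), together with the coercivity of $u\mapsto\langle\mathcal A_\disc(u),u-\phi\rangle+J_\disc(u)$.
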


The proof of Theorem \ref{thm:convnlobs} is extremely similar to the proof of 
Theorem \ref{thm:convnlsign}. We therefore only provide the proof of Theorem \ref{thm:convbulkley}.

\begin{proof}[Proof of Theorem \ref{thm:convbulkley}]
Let us define the operator $\mathcal A_\disc:X_{\disc,0}\to X_{\disc,0}^{'}$ and the functional $J_\disc:X_{\disc,0} \to \RR^+$ as follows:
\[
\begin{aligned}
&\langle \mathcal A_\disc (u),v \rangle= \int_\O \bfa(x,\Pi_\disc u(x),\nabla_\disc u(x))\cdot\nabla_\disc v(x)  \ud x \quad \mbox{and}\\  
&J_\disc(u)=\int_\O |\nabla_\disc u(x)| \ud x,
\end{aligned}
\]
where $\langle\cdot,\cdot\rangle$ is the duality product between $X_{\disc,0}^{'}$ and $X_{\disc,0}$.
Applying the same arguments as in \cite{NB-1}, one can easily prove that the operator $\mathcal A_\disc$ is pseudo-monotone and obtain, since $J_\disc\ge 0$,
\begin{align*}
\frac{\left\langle \mathcal A_\disc(u), u - \phi \right\rangle +J_\disc(u)}{\|\nabla_\disc u\|_{L^p(\O)^d}} \to +\infty \quad \mbox{as}\quad\|\nabla_\disc u\|_{L^p(\O)^d}\to \infty.
\end{align*}
A direct application of \cite[Theorem 8.5, Chap. 2]{NB-1} then gives the existence of a solution to Problem \eqref{gsbulk}. 
 
We now show that $\| \nabla_{\disc_m}u_m \|_{L^p(\O)^d}$ is bounded. Choose $u:=u_{m}$, and $v:=0\in X_{\disc_{m},0}$ in \eqref{gsbulk}. Due to the coercivity assumption \eqref{asm:coer} on $\bfa$, 
the H\"older inequality and the coercivity of $(\disc_m)_{m\in\NN}$,
one has
\begin{align*}
\underline{a}\|\nabla_{\mathcal{D}_{m}}u_m\|_{L^{p}(\Omega)^{d}}^{p}
\leq{}&\dsp\int_\O \bfa(x,\Pi_{\disc_{m}}u_m,\nabla_{\disc_{m}}u_m)
\cdot\nabla_{\disc_{m}}u_m \ud x\\
\leq{}& \dsp\int_\O f \Pi_{\disc_{m}}u_m \ud x\\
\le{}& C_p\|f\|_{L^{p'}(\O)}\|\nabla_{\mathcal{D}_{m}}u_m\|_{L^{p}(\Omega)^{d}}.
\end{align*}
This shows that $\|\nabla_{\mathcal{D}_{m}}u_m\|_{L^{p}(\Omega)^{d}}$ is bounded.
According to \cite[Lemma 2.15]{S1}, there exists $\bar u \in W_0^{1,p}(\Omega)$ and a subsequence, denoted in the same way, such that 
$\Pi_{\mathcal{D}_{m}}u_{m}$ converges weakly to $\bar u$ in $L^{p}(\Omega)$ and $\nabla_{\mathcal{D}_{m}}u_{m}$ converges weakly to $\nabla\bar u$ in $L^{p}(\Omega)^{d}$. In fact, the strong convergence of the sequence $\Pi_{\mathcal{D}_{m}}u_{m}$ to $\bar u$ in $L^p(\O)$ is ensured by the compactness property. The growth assumption \eqref{asm:grow} shows that the sequence $\mathcal{A}_{\disc_m}=\bfa(x, \Pi_{\disc_{m}} u_m, \nabla_{\disc_{m}}u_m)$ is bounded in $L^{p'}(\O)^d$ and
thus, up to a subsequence, that it converges weakly to some $\mathcal A$ in this space. 

Define $P_{\disc_m}$ as in \eqref{cons.funct} with $\cK$ and $\cK_{\disc_m}$
replaced with $W^{1,p}_0(\O)$ and $X_{\disc_m,0}$, respectively. The consistency 
 guarantees that $\Pi_{\mathcal{D}_{m}}(P_{\mathcal{D}_{m}}\varphi) \to \varphi$ strongly in $L^{p}(\Omega)$ and $\nabla_{\mathcal{D}_{m}}(P_{\mathcal{D}_{m}}\varphi) \to \nabla\varphi$ strongly in $L^{p}(\Omega)^{d}$, for all $\varphi\in W_0^{1,p}$. 
Inserting $v:=P_{\disc_{m}}\varphi$ into the gradient scheme \eqref{gsnlobs}, we obtain
\begin{equation}\label{eq:bulk5}
\begin{aligned}
\int_\O \bfa(x,\Pi_{\disc_m}u_m,\nabla_{\disc_m}u_m) &\cdot\nabla_{\disc_{m}}u_m\ud x\\
\leq{}&
\int_\O\bfa(x,\Pi_{\disc_m}u_m,\nabla_{\disc_m}u_m)\cdot\nabla_{\disc_{m}}P_{\disc_{m}}\varphi\ud x\\
&+
\int_\O f\Pi_{\disc_{m}}(u_m-P_{\disc_{m}}\varphi)\ud x\\
&-
\int_\O |\nabla_{\disc_{m}} u_m | \ud x 
+ \int_\O |\nabla_{\disc_{m}} (P_{\disc_{m}}\varphi)| \ud x.
\end{aligned}
\end{equation} 
All the terms except the last two can be handled as in Theorem \ref{thm:convnlsign}. From the strong convergence of $\nabla_{\disc_m}(P_{\disc_{m}}\varphi)$, letting $m \rightarrow \infty$ in the last term implies
\begin{equation}\label{eq:str1}
\lim_{m \rightarrow \infty}\int_\O |\nabla_{\disc_{m}} (P_{\disc_{m}}\varphi)| \ud x=
\int_\O |\nabla\varphi| \ud x.
\end{equation}
Estimating $\liminf_{m\to\infty}\int_\O |\nabla_{\disc_m}u_m|\ud x$ is rather
standard. 
For any $\mathbf{w}\in L^\infty(\O)^d$ such that $|\mathbf{w}|\leq 1$, 
write
$\int_\O \mathbf{w}\cdot\nabla_{\disc_m}u_m \ud x \leq \int_\O |\nabla_{\disc_m}u_m| \ud x$.
The weak convergence in $L^p(\O)^d$ of $\nabla_{\disc_m}u_m$ then yields
\[
\int_\O \mathbf{w}\cdot\nabla\bar u \ud x=
\lim_{m\to\infty}\int_\O \mathbf{w}\cdot\nabla_{\disc_m}u_m \ud x
\le \liminf_{m\to\infty}\int_\O |\nabla_{\disc_m}u_m| \ud x.
\] 
Taking the supremum over $\mathbf{w}$ leads to
\[
\int_\O |\nabla\bar u| \ud x \leq \liminf_{m\to\infty}\int_\O |\nabla_{\disc_m}u_m| \ud x. 
\] 
From this estimation and \eqref{eq:str1}, passing to the superior limit in \eqref{eq:bulk5} gives
\begin{multline}\label{ineq:limbulk}
\limsup_{m\to \infty}
\int_\O \bfa(x,\Pi_{\disc_m}u_m,\nabla_{\disc_m}u_m) \cdot\nabla_{\disc_{m}}u_m\ud x\\
\leq{}
\int_\O \mathcal A\cdot\nabla\varphi\ud x
+\int_\O f(\bar u-\varphi)\ud x-\int_\O |\nabla\bar u | \ud x 
+ \dsp\int_\O |\nabla\varphi| \ud x.
\end{multline} 
Since this inequality holds for any $\varphi \in W_0^{1,p}(\O)$, making $\varphi=\bar u$
gives
\begin{equation}\label{eq:bulk2}
\limsup_{m\to \infty}\dsp\int_\O \bfa(x,\Pi_{\disc_m}u_m,\nabla_{\disc_m}u_m)\cdot \nabla_{\disc_{m}}u_m\ud x 
\leq \int_\O \mathcal A\cdot\nabla \varphi \ud x.
\end{equation}
Exactly as Theorem \ref{thm:convnlsign}, it is then shown that $\mathcal A=\bfa(x,\bar u,\nabla\bar u)$
and 
\begin{equation*}
\int_\O \bfa(x,\bar u, \nabla\bar u)\cdot \nabla\bar u \ud x 
\leq \liminf_{m \rightarrow \infty}\int_\O \bfa(x,\Pi_{\disc_{m}}u_m, \nabla_{\disc_{m}}u_m)\cdot \nabla_{\disc_{m}}u_m
 \ud x.
\end{equation*}
Substituting $\mathcal A$ and using this relation in \eqref{ineq:limbulk} show that $\bar u$ is a solution to Problem \eqref{webulk}.
The rest of proof follows the same lines as for Theorem \ref{thm:convnlsign}.
\end{proof}

\section{Approximate barriers}\label{sec:approx.barriers}

Let us now discuss the case of approximate barriers. In most numerical methods, such as the $\mathbb P_1$ finite elements for instance, the standard interpolant of a smooth function $v$ is constructed by taking the value of $v$ at interpolation nodes. When $v$ is bounded by the barrier ($a$ for the Signorini problem, $\psi$ for the obstacle problem), this interpolation may not satisfy the barriers conditions at all points on the boundary/in the domain, especially in the case of non-constant barriers. It is therefore classical to modify these barriers conditions when discretising the model. This modification can often be written in the following way. 

Using $a_\disc\in L^p(\partial\O)$ (for the Signorini problem) or $\psi_\disc \in L^p(\O)$
(for the obstacle problem), which are respectively approximations of $a$ or $\psi$, we introduce the convex sets
\[\cK_{\disc,a_\disc}:=\{ v \in \cI_{\disc,\Gamma_1}g+X_{\disc,\Gamma_{2,3}}\; : \; \mathbb{T}_{\mathcal{D}}v \leq a_\disc\; \mbox{on}\; \Gamma_{3}\}
\]
or
\[
\cK_{\disc,\psi_\disc}:=\{ v \in \cI_{\disc,\Gamma_1}h+X_{\disc,0}\; : \; \Pi_\disc v \leq \psi_\disc\}.
\]
The schemes \eqref{gsnlsign} or \eqref{gsnlobs} are then modified by replacing the set $\cK_\disc$ by $\cK_{\disc,a_\disc}$ in the Signorini case, or by $\cK_{\disc,\psi_\disc}$ in the obstacle case. The convergence results for this case of approximate barriers are given in the following theorems, whose proofs are identical to that of Theorem \ref{thm:convnlsign} (see \cite[Section 6]{AD14} for the case
of approximate barriers in gradient schemes for linear VIs).

\begin{theorem}[Convergence: non-linear Signorini, approximate barrier]\label{thm:bari-conv-nlsign}~\\
Under the assumptions of Theorem \ref{thm:convnlsign}, let $(\mathcal{D}_{m})_{m \in \mathbb{N}}$ be a sequence of gradient discretisations in the sense of Definition \ref{def:gdnhseepage}, such that $(\disc_m)_{m\in\NN}$ is coercive, limit-conforming, compact, and GD-consistent (with $S_\disc$ defined using
$\cK_{\disc,a_\disc}$ instead of $\cK_{\disc}$). Assume that each $\cK_{\disc_m,a_{\disc_m}}$ is non-empty. 

Then, for any $m \in \mathbb{N}$, there exists at least one solution $u_{m} \in \cK_{\disc_m,a_{\disc_m}}$ to the gradient scheme \eqref{gsnlsign} in which $\cK_{\disc_m}$ has been replaced with $\cK_{\disc_m,a_{\disc_m}}$. If moreover $a_{\disc_m}\to a$ in $L^p(\partial\O)$ as $m\to\infty$, then the convergences of $\Pi_{\disc_m}u_m$ and $\nabla_{\disc_m}u_m$ stated in Theorem \ref{thm:convnlsign} still hold. 
\end{theorem}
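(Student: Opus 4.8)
The plan is to adapt the proof of Theorem \ref{thm:convnlsign} almost verbatim, tracking carefully where the barrier $a$ intervenes and replacing $\cK_{\disc_m}$ with $\cK_{\disc_m,a_{\disc_m}}$ throughout. \textbf{Existence of a discrete solution.} First I would note that Step 1 of the proof of Theorem \ref{thm:convnlsign} is completely insensitive to which convex set we use for the discrete unknowns: the operator $\mathcal A_\disc$ built from $\bfa$ is still an operator of the calculus of variations on the finite-dimensional space $X_{\disc,\Gamma_{2,3}}$, and $\cK_{\disc_m,a_{\disc_m}}-g_{\disc_m}$ is a nonempty (by hypothesis) closed convex subset of that space. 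Hence \cite[Theorem 8.2, Chap. 2]{NB-1} applies unchanged and yields at least one $u_m\in\cK_{\disc_m,a_{\disc_m}}$ solving the modified scheme.

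\textbf{A priori bound and weak limits.} The estimate \eqref{final.est.um} only uses the coercivity, growth and discrete-coercivity properties together with the fact that $u_m-v_m\in X_{\disc_m,\Gamma_{2,3}}$ for any $v_m\in\cK_{\disc_m,a_{\disc_m}}$; none of this is affected. To obtain a bounded test sequence $v_m$, I would replace the operator $P_{\disc_m}$ of \eqref{cons.funct} by its analogue built over $\cK_{\disc_m,a_{\disc_m}}$ (or, more simply, invoke the GD-consistency hypothesis with $S_\disc$ defined via $\cK_{\disc,a_\disc}$, as assumed in the statement): this gives, for each $\varphi\in\cK$, a sequence $v_m\in\cK_{\disc_m,a_{\disc_m}}$ with $\Pi_{\disc_m}v_m\to\varphi$ and $\nabla_{\disc_m}v_m\to\nabla\varphi$. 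Plugging this into \eqref{final.est.um} bounds $\|\nabla_{\disc_m}u_m\|_{L^p(\O)^d}$, and then \cite[Lemma 2.57]{S1} (with the $\Gamma_2$ adjustment of \eqref{conformityseepage}) and \eqref{newcons} produce $\bar u\in W^{1,p}(\O)$ with $\gamma\bar u=g$ on $\Gamma_1$, weak $L^p$ convergence of $\Pi_{\disc_m}u_m$ to $\bar u$ and of $\nabla_{\disc_m}u_m$ to $\nabla\bar u$, weak $L^p(\Gamma_3)$ convergence of $\mathbb T_{\disc_m}u_m$ to $\gamma\bar u$, and — via the compactness hypothesis — strong $L^p(\O)$ (hence, up to a subsequence, a.e.) convergence of $\Pi_{\disc_m}u_m$.

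\textbf{The one genuinely new point: $\bar u\in\cK$.} This is where the hypothesis $a_{\disc_m}\to a$ in $L^p(\partial\O)$ is used. We know $\mathbb T_{\disc_m}u_m\le a_{\disc_m}$ on $\Gamma_3$, and $\mathbb T_{\disc_m}u_m\rightharpoonup\gamma\bar u$ weakly in $L^p(\Gamma_3)$ while $a_{\disc_m}\to a$ strongly in $L^p(\Gamma_3)$; since $\{w\in L^p(\Gamma_3):w\le a\}$ is convex and strongly closed, hence weakly closed, and since $\mathbb T_{\disc_m}u_m-a_{\disc_m}\rightharpoonup\gamma\bar u-a\le$-limit of nonpositive functions, we get $\gamma\bar u\le a$ on $\Gamma_3$. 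Together with $\gamma\bar u=g$ on $\Gamma_1$ this shows $\bar u\in\cK$, so that $\bar u$ is an admissible limit and the test function $\varphi=\bar u$ used in the Minty argument is legitimate. I expect this to be the main (and essentially the only) obstacle, and it is mild: it is just the stability of a one-sided constraint under weak limits combined with strong convergence of the (varying) barrier.

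\textbf{Passage to the limit and strong gradient convergence.} Once $\bar u\in\cK$ is secured, the Minty trick (equations \eqref{eq:conv1}--\eqref{eq:conv3}), the identification $\mathcal A=\bfa(x,\bar u,\nabla\bar u)$, and the final inequality showing $\bar u$ solves \eqref{wenlsign} go through word for word, because the test functions there are $P_{\disc_m}\varphi\in\cK_{\disc_m,a_{\disc_m}}$ (obtained from the modified GD-consistency) and $\varphi$ ranges over $\cK$, which is exactly what the weak formulation \eqref{wenlsign} requires. Finally, under the strict monotonicity \eqref{asm:smont}, Step 3 of the proof of Theorem \ref{thm:convnlsign} — which only manipulates the almost-everywhere convergence of $\Pi_{\disc_m}u_m$, the relation \eqref{eq:conv4}, Lemmas 3.2 and 3.3 of \cite{B1}, and Vitali's theorem — is entirely unchanged and gives strong $L^p(\O)^d$ convergence of $\nabla_{\disc_m}u_m$ to $\nabla\bar u$. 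Thus the statement follows.
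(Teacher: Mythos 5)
Your proposal is correct and follows exactly the route the paper intends: the paper gives no separate proof, stating only that the argument is identical to that of Theorem \ref{thm:convnlsign} with $\cK_{\disc_m}$ replaced by $\cK_{\disc_m,a_{\disc_m}}$, and you have verified that every step indeed carries over unchanged. You have also correctly isolated and justified the one genuinely new point — that $\gamma\bar u\le a$ on $\Gamma_3$ follows from the weak convergence of $\mathbb{T}_{\disc_m}u_m$, the strong convergence $a_{\disc_m}\to a$ in $L^p(\partial\O)$, and the weak closedness of the convex cone of nonpositive functions — which is precisely the ingredient the paper leaves implicit.
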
 

\begin{theorem}[Convergence: non-linear obstacle problem, approximate barrier]\label{thm:bari-conv-nlobs}~\\
Under the assumptions of Theorem \ref{thm:convnlobs}, let $(\mathcal{D}_{m})_{m \in \mathbb{N}}$ be a sequence of gradient discretisations in the sense of Definition \ref{def:gdnlobs}, such that $(\disc_m)_{m\in\NN}$ is coercive, limit-conforming, compact, and GD-consistent (with $S_\disc$ defined using $\cK_{\disc,\psi_\disc}$ instead of $\cK_{\disc}$). Assume that $\cK_{\disc_m,\psi_{\disc_m}}$ is non-empty for any $m$.

Then, for any $m \in \mathbb{N}$, there exists at least one solution $u_{m} \in \cK_{\disc_m,\psi_{\disc_m}}$ to the gradient scheme \eqref{gsnlobs} in which $\cK_{\disc_m}$ has been replaced with $\cK_{\disc_m,\psi_{\disc_m}}$. Furthermore, if $\psi_{\disc_m}\to \psi$ in $L^p(\O)$ as $m\to\infty$, then the convergences of $\Pi_{\disc_m}u_m$ and $\nabla_{\disc_m}u_m$ given in Theorem \ref{thm:convnlobs} still hold.   
\end{theorem}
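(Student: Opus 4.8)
The plan is to transcribe, almost line for line, the three-step proof of Theorem~\ref{thm:convnlsign}, the only genuinely new ingredient being the stability of the obstacle constraint under the simultaneous limits $m\to\infty$ in the discretisation and in the barrier. For the existence (analogue of Step~1 of Theorem~\ref{thm:convnlsign}), fix $m$, pick a lifting $\widetilde h\in\cK$ of $h$ (possible since $\cK\neq\emptyset$ by Assumptions~\ref{hyp-nl-obs}) and set
\[
g_{\disc}=\argminB_{v\in\cK_{\disc,\psi_\disc}}\bigl(\|\Pi_\disc v-\widetilde h\|_{L^p(\O)}+\|\nabla_\disc v-\nabla\widetilde h\|_{L^p(\O)^d}\bigr),
\]
which is well defined because $\cK_{\disc,\psi_\disc}$ is a non-empty closed convex subset of the finite-dimensional affine space $\cI_{\disc,\partial\O}h+X_{\disc,0}$. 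Writing $\widehat u=u-g_\disc$, the modified scheme~\eqref{gsnlobs} becomes a variational inequality on the non-empty closed convex set $\cK_{\disc,\psi_\disc}-g_\disc\subset X_{\disc,0}$, driven by the operator $\widehat v\mapsto\int_\O\bfa(x,\Pi_\disc(\widehat u+g_\disc),\nabla_\disc(\widehat u+g_\disc))\cdot\nabla_\disc(\widehat v+g_\disc)\ud x$ and the linear form $\ell(\widehat w)=\int_\O f\Pi_\disc\widehat w\ud x$; since $X_{\disc,0}$ is finite-dimensional this is an operator of the calculus of variations (coercivity from~\eqref{asm:coer} and the norm property of $\|\nabla_\disc\cdot\|_{L^p(\O)^d}$ on $X_{\disc,0}$), so \cite[Theorem~8.2, Chap.~2]{NB-1} yields a solution $u_m\in\cK_{\disc,\psi_\disc}$.

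For the a priori estimate, test~\eqref{gsnlobs} with $u:=u_m$ and an arbitrary $v:=v_m\in\cK_{\disc_m,\psi_{\disc_m}}$ and run the same chain of inequalities (H\"older, \eqref{asm:coer}, the coercivity constant $C_p$ of Definition~\ref{def:obscoercivity} applied to $u_m-v_m\in X_{\disc_m,0}$, \eqref{asm:grow}, Young) that produced~\eqref{final.est.um}, obtaining $\|\nabla_{\disc_m}u_m\|_{L^p(\O)^d}^p\le C_1\bigl(\|\nabla_{\disc_m}v_m\|_{L^p(\O)^d}^p+\|f\|_{L^{p'}(\O)}^{p'}+\|\overline a\|_{L^{p'}(\O)^d}^{p'}\bigr)$ with $C_1$ independent of $m$. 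Choosing $v_m:=P_{\disc_m}\varphi$ for a fixed $\varphi\in\cK$, where $P_{\disc_m}\varphi\in\cK_{\disc_m,\psi_{\disc_m}}$ is the projection defined as in~\eqref{cons.funct} with $\cK_{\disc_m}$ replaced by $\cK_{\disc_m,\psi_{\disc_m}}$, the GD-consistency bounds $\|\nabla_{\disc_m}v_m\|_{L^p(\O)^d}$ and hence $\|\nabla_{\disc_m}u_m\|_{L^p(\O)^d}$. Then, exactly as in the proof of Theorem~\ref{thm:convnlsign} (coercivity, limit-conformity and the GDM compactness results for non-homogeneous Dirichlet conditions in \cite{S1}), one gets, up to a subsequence, $\bar u\in W^{1,p}(\O)$ with $\gamma\bar u=h$, $\nabla_{\disc_m}u_m\rightharpoonup\nabla\bar u$ weakly in $L^p(\O)^d$, and — by the compactness hypothesis — $\Pi_{\disc_m}u_m\to\bar u$ strongly in $L^p(\O)$, hence a.e. along a further subsequence.

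The only new step is checking $\bar u\in\cK$: from $u_m\in\cK_{\disc_m,\psi_{\disc_m}}$ we have $\Pi_{\disc_m}u_m\le\psi_{\disc_m}$ a.e. in $\O$, and passing to the a.e. limit — using $\Pi_{\disc_m}u_m\to\bar u$ a.e. and, by assumption, $\psi_{\disc_m}\to\psi$ in $L^p(\O)$ hence a.e. along a subsequence — yields $\bar u\le\psi$ a.e., i.e. $\bar u\in\cK$. From here the identification of $\bar u$ as a solution of~\eqref{wenlobs} is the Minty argument of Steps~2--3 of Theorem~\ref{thm:convnlsign}, verbatim with $\cK_{\disc_m}$ replaced by $\cK_{\disc_m,\psi_{\disc_m}}$: testing with $v:=P_{\disc_m}\varphi$ for $\varphi\in\cK$ (which still satisfies $\Pi_{\disc_m}(P_{\disc_m}\varphi)\to\varphi$ and $\nabla_{\disc_m}(P_{\disc_m}\varphi)\to\nabla\varphi$ strongly by the modified consistency), passing to the $\limsup$, and invoking monotonicity~\eqref{asm:mont} together with the strong $L^{p'}$-convergence $\bfa(x,\Pi_{\disc_m}u_m,\bG)\to\bfa(x,\bar u,\bG)$, one identifies the weak $L^{p'}$-limit $\mathcal A$ of $\bfa(x,\Pi_{\disc_m}u_m,\nabla_{\disc_m}u_m)$ as $\bfa(x,\bar u,\nabla\bar u)$ and obtains the analogue of~\eqref{eq:conv3}; inserting this into the scheme shows $\bar u$ solves~\eqref{wenlobs}. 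Under the strict monotonicity~\eqref{asm:smont}, Step~3 of Theorem~\ref{thm:convnlsign} applies unchanged: the nonnegative integrand $(\bfa(x,\Pi_{\disc_m}u_m,\nabla_{\disc_m}u_m)-\bfa(x,\bar u,\nabla\bar u))\cdot(\nabla_{\disc_m}u_m-\nabla\bar u)$ tends to $0$ in $L^1(\O)$, whence, by \cite[Lemmas~3.2 and~3.3]{B1}, the equi-integrability of $(|\nabla_{\disc_m}u_m|^p)_m$ and the Vitali theorem, $\nabla_{\disc_m}u_m\to\nabla\bar u$ strongly in $L^p(\O)^d$.

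I expect no real obstacle here: the whole difficulty has been isolated into the one-line passage to the limit in $\Pi_{\disc_m}u_m\le\psi_{\disc_m}$, which merely uses that $L^p$-convergence preserves a.e. inequalities along subsequences. The only point requiring care is the bookkeeping of the nested subsequence extractions (weak $L^p$ for $\nabla_{\disc_m}u_m$ and for $\mathcal A_{\disc_m}$, a.e. for $\Pi_{\disc_m}u_m$, a.e. for $\psi_{\disc_m}$, and, in the strictly monotone case, a.e. for $\nabla_{\disc_m}u_m$), so that every almost-everywhere statement is valid along the final subsequence.
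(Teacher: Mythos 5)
Your proposal is correct and follows exactly the route the paper intends: the paper gives no separate proof of Theorem~\ref{thm:bari-conv-nlobs}, stating only that it is identical to the argument of Theorem~\ref{thm:convnlsign}, and your transcription (existence via translation by a lifting and \cite[Theorem~8.2, Chap.~2]{NB-1}, the a priori bound with $v_m=P_{\disc_m}\varphi$, weak/strong compactness, Minty, and Vitali under strict monotonicity) is that adaptation. You have also correctly isolated and handled the only genuinely new point, namely passing to the limit in $\Pi_{\disc_m}u_m\le\psi_{\disc_m}$ using the a.e.\ convergence of both sides along a subsequence.
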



\section{Application to the hybrid mimetic mixed methods}\label{sec:HMM}

The gradient discretisation method is used here to design a hybrid mimetic mixed (HMM)
scheme for non-linear variational inequalities. It is shown in \cite{B5} that the HMM method gathers three different families: the hybrid finite volume method, the (mixed-hybrid) mimetic finite differences methods, and the mixed finite volume methods. The mimetic methods have become efficient tools to discretise heterogeneous anisotropic diffusion problems on generic meshes. In \cite{AD14} we established the HMM method for the linear Signorini and obstacle problems (i.e., $\bfa(x,\bar u,\nabla\bar u)=\Lambda(x)\nabla\bar u$). 
The only other application of mimetic method to variational inequalities, of which we are aware,
concerns linear variational inequalities and the nodal mimetic finite difference
method \cite{A-32}. The HMM scheme described here for non-linear variational
inequalities seems to be the first numerical scheme for these models on generic meshes.

Let us first recall the notion of polytopal mesh \cite{B1}.

\begin{definition}[Polytopal mesh]\label{def:polymesh}~
Let $\Omega$ be a bounded polytopal open subset of $\RR^d$ ($d\ge 1$). 
A polytopal mesh of $\O$ is given by $\polyd = (\mesh,\edges,\centers)$, where:
\begin{enumerate}
\item $\mesh$ is a finite family of non empty connected polytopal open disjoint subsets of $\O$ (the cells) such that $\overline{\O}= \dsp{\cup_{K \in \mesh} \overline{K}}$.
For any $K\in\mesh$, $|K|>0$ is the measure of $K$ and $h_K$ denotes the diameter of $K$.

\item $\edges$ is a finite family of disjoint subsets of $\overline{\O}$ (the edges of the mesh in 2D,
the faces in 3D), such that any $\edge\in\edges$ is a non empty open subset of a hyperplane of $\RR^d$ and $\edge\subset \overline{\O}$.
We assume that for all $K \in \mesh$ there exists  a subset $\edgescv$ of $\edges$
such that $\dr K  = \dsp{\cup_{\edge \in \edgescv}} \overline{\edge}$. 
We then set $\mesh_\edge = \{K\in\mesh\,:\,\edge\in\edgescv\}$
and assume that, for all $\edge\in\edges$, $\mesh_\edge$ has exactly one element
and $\edge\subset\partial\O$, or $\mesh_\edge$ has two elements and
$\edge\subset\O$. 
$\edgesint$ is the set of all interior faces, i.e. $\edge\in\edges$ such that $\edge\subset \O$, and $\edgesext$ the set of boundary
faces, i.e. $\edge\in\edges$ such that $\edge\subset \dr\O$.
For $\edge\in\edges$, the $(d-1)$-dimensional measure of $\edge$ is $|\edge|$,
the centre of mass of $\edge$ is $\centeredge$, and the diameter of $\edge$ is $h_\edge$.

\item $\centers = (x_K)_{K \in \mesh}$ is a family of points of $\O$ indexed by $\mesh$ and such that, for all  $K\in\mesh$,  $\xcv\in K$ ($\xcv$ is sometimes called the ``centre'' of $\cv$). 
We then assume that all cells $K\in\mesh$ are  strictly $\xcv$-star-shaped, meaning that 
if $x\in \overline{K}$ then the line segment $[\xcv,x)$ is included in $K$.
\end{enumerate}
For a given $K\in \mesh$, let $\bfn_{K,\sigma}$ be the unit vector normal to $\sigma$ outward to $K$
and denote by $d_{K,\sigma}$ the orthogonal distance between $x_K$ and $\sigma\in\mathcal E_K$.
The size of the discretisation is $h_\mesh=\sup\{h_K\,:\; K\in \mesh\}$.
\end{definition}

\begin{remark}This definition allows for very generic meshes, possibly non-conforming (with `hanging nodes') and with non-convex cells. In particular, all meshes used in Section \ref{sec:test}
satisfy this definition.
\end{remark}

\subsection{HMM for the Signorini problem}\label{sec:HMM-nlsign}

Let $\polyd$ be a polytopal mesh that is aligned with the
boundaries $(\Gamma_i)_{i=1,2,3}$, that is, for any $i=1,2,3$,
each boundary edge is either fully included in $\Gamma_i$ or disjoint from this set.
We describe here a gradient discretisation that corresponds, for linear
diffusion problems and standard boundary conditions, to the HMM method
\cite{B1,S1}.

Define two discrete spaces as follows:  
\begin{equation}\label{def.XDG}
\begin{aligned}
X_{\disc,\Gamma_{2,3}}=\{ &v=((v_{K})_{K\in \mathcal{M}}, (v_{\sigma})_{\sigma \in \mathcal{E}})\;:\; v_{K} \in \RR,\, v_{\sigma} \in \RR,\\
& v_{\sigma}=0 \; \mbox{for all}\; \sigma \in \edgesext\mbox{ such that }\sigma\subset \Gamma_{1} \}
\end{aligned}
\end{equation}
and
\begin{equation}\label{def.XDG1}
\begin{aligned}
X_{\disc,\Gamma_1}=\{ &v=((v_{K})_{K\in \mathcal{M}}, (v_{\sigma})_{\sigma \in \mathcal{E}})\;:\; v_{K} \in \RR,\, v_{\sigma} \in \RR,\\
&v_K=0 \mbox{ for all } K\in \mesh,\,
v_\sigma=0 \mbox{ for all } \sigma \in \edgesint \mbox{ and }\\
&v_\sigma=0 \mbox{ for all } \sigma \in \edgesext\mbox{ such that }\sigma\subset \Gamma_2\cup\Gamma_3 \}.
\end{aligned}
\end{equation}
The space 
\begin{equation}\label{def:XDHMM}
X_{\disc}=\{v=((v_K)_{K\in\mesh},(v_\edge)_{\edge\in\edges})\,:\,v_K\in\RR\,,\;v_\edge\in\RR\}
\end{equation}
is the direct sum of these two spaces. The piecewise-constant function reconstruction $\Pi_\disc$, the piecewise-constant trace reconstruction $\mathbb T_\disc$ and the gradient reconstruction $\nabla_\disc$ are given by: $\forall v\in X_\disc$,
\begin{align}\label{gd-hmm-nlsign}
\begin{aligned}
&\forall K\in\mathcal M\,:\,\Pi_\disc v=v_K\mbox{ on $K$},\\
&\forall \sigma\in\mathcal E_{\rm ext}\,:\,\mathbb{T}_\disc v = v_\sigma\mbox{ on $\sigma$},\\
&\forall K\in\mathcal M,\,\forall \sigma\in\mathcal E_K\,:\,\nabla_\disc v=\nabla_{K}v+
\frac{\sqrt{d}}{d_{K,\sigma}}(A_{K}R_{K}(v))_{\sigma}\mathbf{n}_{K,\sigma}\mbox{ on }D_{K,\edge},
\end{aligned}
\end{align}
where $D_{K,\edge}$ is the convex hull of $\edge\cup\{x_K\}$ and
\begin{itemize}
\item $\nabla_{K}v= \frac{1}{|K|}\sum_{\sigma\in \edgescv}|\sigma|v_\edge\mathbf{n}_{K,\sigma}$,
\item $R_{K}(v)=(v_{\sigma}-v_{K}-\nabla_{K}v\cdot (\centeredge-x_{K}))_{\sigma\in\mathcal E_K}\in \RR^\edgescv$, \item $A_K$ is an isomorphism of the vector space ${\rm Im}(R_K)$.
\end{itemize}

The interpolant $\cI_{\disc,\Gamma_1}:W^{1-\frac{1}{p},p}(\partial\O)\to X_{\disc,\Gamma_1}$ is defined by
\begin{equation}\label{def:ID}
\begin{aligned}
\forall g\in W^{1-\frac{1}{p},p}(\partial\O)\,:\,{}&(\cI_{\disc,\Gamma_{1}}g)_{\sigma}=\frac{1}{|\sigma|}\int_\sigma g(x)\ud s(x),\\
&\mbox{ for all } \sigma \in\edgesext \mbox{ such that } \sigma\subset\Gamma_1.
\end{aligned}
\end{equation}

We then have $\mathcal{K}_{\disc}:= \{ v \in X_{\disc,\Gamma_{2,3}}+\cI_{\disc,\Gamma_1}g \; : \; v_{\sigma} \leq a
\mbox{ on }\sigma\,,\; \forall \sigma \in \edgesext \mbox{ s.t.\ } \sigma\subset \Gamma_{3}\}$,
and
the HMM discretisation of Problem \eqref{wenlsign} is the gradient scheme \eqref{gsnlsign} corresponding to the gradient discretisation described above.

The convergence of the HMM scheme is a consequence of Theorem \ref{thm:convnlsign} and of the four properties proved in the following proposition. The condition \eqref{newcons} follows from the GD-consistency of HMM for Fourier boundary conditions (see \cite[Definition 3.37 and Section 13.2.2]{S1}).

\begin{proposition}\label{pro-hmm-consist}
Let $(\disc_m)_{m\in\NN}$ be a sequence of HMM GDs given by \eqref{def.XDG}--\eqref{gd-hmm-nlsign},
for certain polytopal meshes $(\polyd_m)_{m\in\NN}$. Assume the existence of $\theta>0$ such that,
for any $m\in\NN$,
\begin{equation}\label{reg.HMM.1}
\begin{aligned}
\theta_\mesh:=\max_{K\in\mesh_m}&\left(\dsp\max_{\edge\in\edges_K}\frac{h_K}{d_{K,\edge}}+{\rm Card}(\edges_K)\right)
+\max_{\stackrel{\mbox{\scriptsize $\edge\in \edges_{m,\rm int}$}}{\mesh_\edge=\{K,L\}}}\left( \frac{d_{K,\edge}}{d_{L,\edge}}+\frac{d_{L,\edge}}{d_{K,\edge}} \right)\leq \theta
\end{aligned}\end{equation}
and, for all $K\in \mesh_m$ and $\mu \in \RR^{\edges_K}$,
\begin{equation}\label{reg.HMM.2}
\begin{aligned}
\frac{1}{\theta}\dsp\sum_{\sigma\in\edges_K}|D_{K,\sigma}| \left| \frac{R_{K,\sigma}(\mu)}{d_{K,\sigma}} \right|^p
\leq{}& \dsp\sum_{\sigma\in\edges_K} |D_{K,\sigma}| \left| \frac{(A_K R_K(\mu))_\sigma}{d_{K,\sigma}} \right|^p\\
\leq{}& \theta \dsp\sum_{\sigma\in\edges_K} |D_{K,\sigma}| \left| \frac{R_{K,\sigma}(\mu)}{d_{K,\sigma}} \right|^p.
\end{aligned}
\end{equation}
Then the sequence $(\disc_m)_{m\in\NN}$ is coercive, limit-conforming and compact in the sense of Definitions \ref{def:nlsigncoer}, \ref{def:nlsignlim} and \ref{def:comnlobs}. If moreover the function $a$ is piecewise-constant on $\edgesext$, then the sequence $(\disc_m)_{m\in\NN}$ is GD-consistent
in the sense of Definition \ref{def:nlsigncons}.
\end{proposition}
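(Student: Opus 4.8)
The plan is to verify the four properties one at a time, reducing each to a known result about HMM gradient discretisations for diffusion PDEs (as established in \cite{B1,S1}), and carefully tracking the modifications introduced by the Signorini trace reconstruction $\mathbb{T}_\disc$, the boundary splitting into $\Gamma_1,\Gamma_2,\Gamma_3$, and the convex set $\cK_\disc$. The regularity assumptions \eqref{reg.HMM.1}--\eqref{reg.HMM.2} are exactly the standard HMM mesh-regularity and stabilisation bounds, so the bulk of the work is to observe that the classical proofs go through with the extra boundary bookkeeping. I would organise the proof as: (i) coercivity, (ii) limit-conformity, (iii) compactness, (iv) GD-consistency under the piecewise-constant barrier assumption.

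\emph{Coercivity.} First I would recall that, under \eqref{reg.HMM.1}--\eqref{reg.HMM.2}, the quantity $\|\nabla_\disc v\|_{L^p(\O)^d}$ controls $\|\Pi_\disc v\|_{L^p(\O)}$ on $X_{\disc,\Gamma_{2,3}}$ — this is the discrete Sobolev/Poincar\'e inequality for HMM, which holds because the boundary degrees of freedom on $\Gamma_1$ vanish and the $(d-1)$-measure of $\Gamma_1$ is positive (Assumption \ref{hip:nosign}(2)); the relevant estimate is \cite[Lemma B.?]{B1} or the corresponding result in \cite{S1}. The new ingredient is bounding $\|\mathbb{T}_\disc v\|_{L^p(\partial\O)}$: since $\mathbb{T}_\disc v = v_\sigma$ on each boundary face $\sigma$, and $v_\sigma - v_K = R_{K,\sigma}(v) + \nabla_K v\cdot(\centeredge - x_K)$, one estimates $|v_\sigma|$ by $|v_K|$ plus terms controlled by the local reconstructed gradient via \eqref{reg.HMM.2}; summing with the measures $|\sigma|$ and using \eqref{reg.HMM.1} to compare $|\sigma|$, $d_{K,\sigma}$ and $|D_{K,\sigma}|$ gives $\|\mathbb{T}_\disc v\|_{L^p(\partial\O)}\le C\|\nabla_\disc v\|_{L^p(\O)^d}$ with $C$ depending only on $\theta$. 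Hence $C_{\disc_m}$ is bounded.

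\emph{Limit-conformity and compactness.} For limit-conformity I would take $\bpsi\in C^2(\overline\O)^d$ with $\bpsi\cdot\bfn=0$ on $\Gamma_2$ and reproduce the standard HMM computation that rewrites $\int_\O(\nabla_\disc v\cdot\bpsi + \Pi_\disc v\,\div\bpsi)\ud x$ as a sum of face terms by discrete integration by parts; the boundary contribution naturally produces $\int_{\partial\O}\mathbb{T}_\disc v\,(\bpsi\cdot\bfn)\ud s$, which splits over $\Gamma_1,\Gamma_2,\Gamma_3$. On $\Gamma_1$ the face unknowns $v_\sigma$ vanish (for $v\in X_{\disc,\Gamma_{2,3}}$), on $\Gamma_2$ we have $\bpsi\cdot\bfn=0$, so only the $\Gamma_3$ term survives — exactly matching the subtracted integral in \eqref{conformityseepage}. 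The remaining consistency-error terms are $O(h_{\mesh_m})$ by the $C^2$ regularity of $\bpsi$ and \eqref{reg.HMM.1}--\eqref{reg.HMM.2}, exactly as in \cite[Lemma ?]{B1}; hence $W_{\disc_m}(\bpsi)\to0$. Compactness is the discrete Rellich theorem for HMM: from \cite[Lemma 2.?]{S1} or the analogous result in \cite{B1}, a bound on $\|\nabla_{\disc_m}u_m\|_{L^p(\O)^d}$ yields relative compactness of $\Pi_{\disc_m}u_m$ in $L^p(\O)$ (via a uniform estimate on translates), and this argument is insensitive to the convex-set constraint $u_m\in\cK_{\disc_m}$ and to the boundary splitting, so it applies verbatim.

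\emph{GD-consistency} is where I expect the only genuinely delicate point, and it is the reason for the hypothesis that $a$ is piecewise-constant on $\edgesext$. Given $\varphi\in\cK$ one must exhibit $v\in\cK_{\disc_m}$ with $\Pi_{\disc_m}v\to\varphi$ and $\nabla_{\disc_m}v\to\nabla\varphi$. The natural candidate is the standard HMM interpolant $P_{\disc_m}\varphi$ built from cell-averages and face-averages of a smooth approximation of $\varphi$; its function and gradient convergence is classical (\cite[Section 12.?]{S1}). The obstruction is that this interpolant need not satisfy the constraint $v_\sigma\le a$ on $\Gamma_3$-faces unless $\varphi$ is smooth and $a$ is well-behaved. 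When $a$ is piecewise-constant on $\edgesext$, however, on each $\Gamma_3$-face $\sigma$ the value of $a$ is a constant $a_\sigma$, and $\gamma(\varphi)\le a$ on $\Gamma_3$ forces the face-average $\frac1{|\sigma|}\int_\sigma\gamma(\varphi)\ud s\le a_\sigma$, so taking $v_\sigma$ to be exactly this average keeps $v\in\cK_{\disc_m}$ while retaining the optimal approximation order. One first reduces to smooth $\varphi$ by density in $\cK$ — here one uses that smooth functions bounded above by the constant $a_\sigma$ on each piece are dense in $\cK$, which again relies on $a$ being piecewise-constant — and then applies the standard HMM estimates. Assembling (i)--(iv) gives the proposition.
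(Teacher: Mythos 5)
Your treatment of coercivity, limit-conformity and compactness is fine; these reduce to the standard HMM properties, and the paper simply cites \cite[Theorem 12.12, Remark 12.13 and Section 12.2]{S1} for them. Your key observation for GD-consistency is also the right one: on a face $\edge\subset\Gamma_3$ where $a$ is constant, the face average of $\gamma(\varphi)$ is automatically bounded by $a_{|\edge}$, so choosing $v_\edge=\frac{1}{|\edge|}\int_\edge\gamma(\varphi)\ud s$ keeps the interpolant in $\cK_{\disc_m}$ — this is exactly inequality \eqref{vm:bdry} in the paper.

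The gap is in how you then obtain the approximation property. You propose to ``first reduce to smooth $\varphi$ by density in $\cK$'', i.e.\ to use density in $\cK$ of smooth functions respecting the barrier. This is precisely the step the paper is at pains to avoid (see the remark following the proposition, contrasting with \cite{A-2-19}): with $g$ only in $W^{1-\frac{1}{p},p}(\partial\O)$ the set $C^2(\overline\O)\cap\cK$ may well be empty (no smooth function has trace exactly $g$ on $\Gamma_1$), and even for smooth $g$ the density of smooth \emph{constrained} functions in $\cK$ is a delicate, unproven claim. The paper's route is different: it applies the interpolant $P^{\omega_m}_{\disc_m}$ \emph{directly} to $\varphi\in\cK\subset W^{1,p}(\O)$ — face unknowns given by face averages of the trace, cell unknowns given by \emph{weighted} cell averages, the non-negative weights $\omega_{m,K}$ being chosen (Lemma \ref{lem:exist.weights}) with unit mean and barycentre $x_K$ — and proves in Proposition \ref{prop:interp.HMM} that this operator is stable from $W^{1,p}(\O)$ into the discrete norms (estimates \eqref{key:stab} and \eqref{stab.grad}). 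The smoothing then happens inside the proof of the approximation property, using density of $C^\infty_c(\RR^d)$ in $W^{1,p}(\O)$ \emph{without any constraint}, combined with the stability bound and a triangle inequality. Note also that the weights are not cosmetic: with plain cell averages and a non-centroidal $x_K$, the residuals $R_{K,\edge}(v)/d_{K,\edge}$ are only $O(1)$ and the stabilisation part of $\nabla_{\disc_m}$ would not converge; the first-moment condition \eqref{cond.weights} is what restores the $O(h_K^2)$ consistency in \eqref{est.snorm.2}. Your sketch omits both points, and as written the consistency step does not go through.
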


\begin{remark}[About the assumptions \eqref{reg.HMM.1}--\eqref{reg.HMM.2}]
Assumption \eqref{reg.HMM.1} is a regularity assumption on the sequences of meshes.
The boundedness of $h_K/d_{K,\edge}$ imposes, by \cite[Lemma B.1]{S1}, that each cell is star-shaped with respect to all points in a ball of radius comparable to the diameter of the cell. The presence of
${\rm Card}(\edges_K)$ forces the maximum number of faces of each cell to remain uniformly bounded.
Finally, the last addend in \eqref{reg.HMM.1} forces the `centers' of two cells $K$ and $L$ on each side of a given face $\edge$ to be within comparable distance to the face (combined with the first bound, this somehow stipulates that two neighbourhing cells must have comparable diameters).

On the contrary, Assumption \eqref{reg.HMM.2} deals with the `coefficients' of the chosen HMM scheme, since it imposes the uniform boudedness of the isomorphisms $A_K$ and their inverse in an appropriately scaled $l^p$ norm on each space ${\rm Im}(R_{K})$.
\end{remark}

\begin{remark} In \cite{A-2-19}, the convergence of numerical schemes for variational inequalities
(with homogenous Dirichlet BC and constant barriers) is established by using the density of
$C^2(\overline \O)\cap \cK$ in $\cK$. We do not need such a density result in Proposition
\ref{pro-hmm-consist}, which enables us to treat the case of piecewise-constant
barriers.
\end{remark}

\begin{proof}
The coercivity, limit-conformity and compactness follow as in the case of the HMM
method for PDEs, see \cite[Theorem 13.14 and Section 13.2]{S1}.
To prove the GD-consistency, we make use of the interpolation operator described in the
appendix. Let $\varphi\in \mathcal K$ and take $v_m=P^{\omega_m}_{\disc_m}\varphi$,
where $P^{\omega_m}_{\disc_m}$ is defined by \eqref{def:Ppolyd} with weights
$\omega_m=(\omega_{m,K})_{K\in\mesh_m}$ given by Lemma \ref{lem:exist.weights}.
Since $\gamma(\varphi)=g$ on $\Gamma_1$, the definitions \eqref{def:ID} and \eqref{def:Ppolyd} of $\cI_{\disc_m,\Gamma_1}$ 
and $P^{\omega_m}_{\disc_m}$ show that $v_m\in \cI_{\disc_m,\Gamma_1}g+X_{\disc_m,\Gamma_{2,3}}$.
Moreover, if $\edge\subset\Gamma_3$ then, since $a$ is constant on $\edge$,
\begin{equation}\label{vm:bdry}
(v_m)_{\edge}=\frac{1}{|\edge|}\int_\edge \gamma(\varphi)(x)\ud s(x)\le 
\frac{1}{|\edge|}\int_\edge a(x)\ud s(x)=a_{|\edge}.
\end{equation}
Hence, $v_m\in \mathcal K_{\disc_m}$ and thus
\[
S_{\disc_m}(\varphi)\le \|\Pi_{\disc_m} v_m-\varphi\|_{L^p(\O)}+\|\nabla_{\disc_m} v_m-\nabla\varphi\|_{L^p(\O)^d}.
\]
Proposition \ref{prop:interp.HMM} shows that the right-hand side of this inequality tends to $0$
as $m\to\infty$, which concludes the proof of the GD-consistency of $(\disc_m)_{m\in\NN}$.
\end{proof}

\begin{remark}[Non-piecewise-constant barrier]\label{rem:HMM.a.nc}
If the barrier $a$ is not piecewise-cons\-tant on $\edgesext$, in the context of HMM schemes
it is natural to consider an approximate barrier as in Section \ref{sec:approx.barriers}. 
The function $a_\disc$ is simply defined as the piecewise-constant function such that
\[
\forall \edge\in\edgesext\,,\;(a_\disc)_{|\edge}=\frac{1}{|\edge|}\int_\edge a(x)\ud s(x).
\]
The same reasoning as in Step 1 of the proof of Proposition \ref{prop:interp.HMM} shows that,
under the assumptions of Proposition \ref{pro-hmm-consist}, $a_{\disc_m}\to a$ in $L^p(\partial\O)$
as $m\to\infty$. Moreover, if $\varphi\in \mathcal K$ then the inequality in \eqref{vm:bdry} shows
that $v_m$ constructed in the proof above belongs to $\mathcal K_{\disc_m,a_{\disc_m}}$.
Hence, $(\disc_m)_{m\in\NN}$ remains GD-consistent if $S_{\disc_m}$ is defined using
$\mathcal K_{\disc_m,a_{\disc_m}}$ instead of $\mathcal K_{\disc_m}$.
Theorem \ref{thm:bari-conv-nlsign} can therefore be applied and establishes the convergence of
the HMM method with this approximate barrier.
\end{remark}

\begin{remark}[Non-conforming $\mathbb{P}_1$ finite elements]
With minor modifications in the proof of Proposition \ref{prop:interp.HMM} regarding
the approximation inside the cell, the arguments
above can be used to analyse the gradient discretisations corresponding to
non-conforming $\mathbb{P}_1$ finite elements \cite[Chapter 9]{S1}. This
shows that Theorems \ref{thm:convnlsign} and \ref{thm:bari-conv-nlsign} apply to these
non-conforming finite elements.
\end{remark}

\subsection{HMM methods for the obstacle problem and Bulkley model}
We use the notations introduced in Section \ref{sec:HMM-nlsign}. The elements of gradient discretisation $\disc$ to consider here are given by  
\begin{equation*}
\begin{aligned}
X_{\disc,0}=\{ v=((v_{K})_{K\in \mathcal{M}}, (v_{\sigma})_{\sigma \in \mathcal{E}})\;:\;{}& v_{K} \in \RR,\;  v_{\sigma} \in \RR,\; v_{\sigma}=0 \; \mbox{for all}\; \sigma \in \edgesext\},\\
X_{\disc,\dr\O}=\{ v=((v_{K})_{K\in \mathcal{M}}, (v_{\sigma})_{\sigma \in \mathcal{E}})\;:\;{}& v_{\sigma} \in \RR,\;  v_K=0 \mbox{ for all } K\in \mesh,\\
& v_\sigma=0 \mbox{ for all } \sigma \in \edgesint\}.
\end{aligned}
\end{equation*}
The discrete mappings $\cI_{\disc,\dr\O}$, $\Pi_\disc$ and $\nabla_\disc$ are as in Section \ref{sec:HMM-nlsign}.

\par Setting $ \mathcal{K}_{\disc}:= \{ v \in X_{\disc,0}+\cI_{\disc,\partial\O}h \; : \; v_K \leq \psi
\mbox{ on $K$, for all $K \in \mesh$}\}$, the HMM methods for \eqref{wenlobs} and \eqref{webulk} are respectively the gradient schemes \eqref{gsnlobs} and \eqref{gsbulk} comming from the above gradient discretisation.  

Recall that the coercivity, limit-conformity and compactness for sequences of GDs
adapted to the obstacle problem are the same properties as for sequences
of GDs for PDEs with non-homogeneous Dirichlet boundary conditions.
The proof of these properties follow therefore from \cite{S1}, under 
the regularity assumptions \eqref{reg.HMM.1}--\eqref{reg.HMM.2}.
If the barrier $\psi$ is constant in each cell,
the GD-consistency follows as in Proposition \ref{pro-hmm-consist}. Indeed, the weights $\omega_{m,K}$ being
non-negative, for all $\varphi\in\mathcal K$, $m\in\NN$ and $K\in\mesh_m$, we have
\begin{align*}
(P^{\omega_m}_{\disc_m}\varphi)_{|K}={}&\frac{1}{|K|}\int_K \omega_{m,K}(x)\varphi(x)\ud x\\
\le{}& \frac{1}{|K|}\int_K \omega_{m,K}(x)\psi(x)\ud x\\
={}&\psi_{|K}\frac{1}{|K|}\int_K \omega_{m,K}(x)\ud x=\psi_{|K},
\end{align*}
which shows that $P^{\omega_m}_{\disc_m}\varphi\in \mathcal K_{\disc_m}$.

For the Bulkley model, all the properties of GDs are identical to those for PDEs with
homogeneous Dirichlet boundary conditions, and therefore follow (still under the
assumptions \eqref{reg.HMM.1}--\eqref{reg.HMM.2}) from \cite{B1}.

Using these properties, the convergence of the HMM method for each problem is a straightforward consequence of Theorems \ref{thm:convnlobs} and \ref{thm:convbulkley}.   

\begin{remark}[Non-piecewise-constant obstacle]
If $\psi$ is not piecewise-constant on the mesh,
we approximate it by the piecewise-constant function $\psi_\disc$ defined by
\[
\forall K\in\mesh\,,\;(\psi_\disc)_{|K}=\frac{1}{|K|}\int_K \omega_K(x)\psi(x)\ud x.
\]
Step 1 in the proof of Proposition \ref{prop:interp.HMM} shows that
$\psi_{\disc_m}\to\psi$ in $L^p(\Omega)$
as $m\to\infty$. Since $P^{\omega_m}_{\disc_m}\varphi\in \mathcal K_{\disc_m,\psi_{\disc_m}}$
whenever $\varphi\in \mathcal K$, this establishes the GD-consistency and
Theorem \ref{thm:bari-conv-nlobs} then ensures the convergence of the HMM method
for the non-linear obstacle problem with approximate barriers.
\end{remark}



\section{Numerical results}\label{sec:test}
\par We demonstrate here the efficiency of the HMM method for solving non-linear Signorini problems by considering the meaningful example of the seepage model. Due to the double
non-linearity in the model, two iterative algorithms are used in conjunction to compute a numerical solution: fixed point iterations to deal with the non-linear operator, and a monotonicity algorithm for the inequalities coming from the imposed Signorini boundary conditions.

We consider test cases from \cite{A1}. In each case, letting $x=(x_1,x_2)$, the model reads
\begin{align*}
-\div(\Lambda(x,\bar u)\nabla\bar u)= 0 &\mbox{\quad in $\Omega$,}\\
\bar{u} =g &\mbox{\quad on $\Gamma_{1}$,} \label{nlsign2}\\
\Lambda(x,\bar u)\nabla\bar u\cdot \mathbf{n}= 0 &\mbox{\quad on $\Gamma_{2}$,}\\
\left.
\begin{array}{r}
\dsp \bar{u} \leq x_2\\
\dsp \Lambda(x,\bar u)\nabla\bar u\cdot \mathbf{n} \leq 0\\
\dsp \Lambda(x,\bar u)\nabla\bar u\cdot \mathbf{n} (x_2-\bar{u}) = 0
\end{array} \right\} 
& \mbox{\quad on}\; \Gamma_{3},
\end{align*}
with $\Omega$, $\Gamma_1$, $\Gamma_2$ and $\Gamma_3$ depending on the test case.
The medium is considered homogeneous and anisotropic and so, after scaling, we can assume that the permeability tensor $\mathbf{K}$ is the identity. Following Remark \ref{rem:HK}, we therefore set $\Lambda(x,s)=\cH_{\varepsilon}^\lambda(s-x_2)$ 
with a regularised Heavyside function $\cH_{\varepsilon}^\lambda$ given by
\begin{equation}\label{def:Hreg}
\begin{aligned}
\mathcal H_\varepsilon^\lambda(\rho)= \left\{
\begin{array}{ll}
1 & \mbox{if } \rho\geq 0\,,\\
\frac{1-\varepsilon}{\lambda}\rho+1 & \mbox{if } -\lambda<\rho<0\,,\\
\varepsilon& \mbox{if } \rho<-\lambda.
\end{array} \right.
\end{aligned}
\end{equation}
Here, both $\lambda$ and $\varepsilon$ are taken equal to $10^{-3}$. As stated above, to obtain the solution to this problem, we first apply simple fixed point iterations (Algorithm \ref{algorm-fixed}), the idea of which is to generate a sequence $(u^{(n)})_{n\in\NN}\subset \cK_\disc$ by solving linear variational inequalities. These linear VIs are obtained by
fixing the non-linearity in the operator to the previous element in the sequence.

\begin{algorithm}
\caption{Fixed point algorithm}\label{algorm-fixed}
\begin{algorithmic}[1]
\State Let $\delta$ be a small number (stopping criteria) and $u^{(0)}=0$ \Comment{For us, $\delta=10^{-2}$.}
\For {$n=1,2,3,...$}
\State Solve the following linear VI, using Algorithm \ref{algo:monoton}: \Comment{$u^{(n)}$ is known} 
\begin{equation}\label{eq:nlalgorithm}
\left\{
\begin{array}{l}
\dsp \mbox{Find}\; u^{(n+1)} \in \mathcal K_\disc \mbox{ such that, for all } v\in \mathcal{K}_{\disc}\,,\\
\dsp\int_\O\Lambda(x,\Pi_\disc u^{(n)})\nabla_\disc u^{(n+1)}\cdot \nabla_\disc(u^{(n+1)}-v) \ud x\\
\qquad\leq 
\dsp\int_\O f\Pi_\disc(u^{(n+1)}-v)\ud x.
\end{array}
\right.
\end{equation}
\If {
\begin{multline*}
\| \Pi_\disc(u^{(n+1)}-u^{(n)}) \|_{L^2(\O)}+\| \nabla_\disc(u^{(n+1)}-u^{(n)}) \|_{L^2(\O)^d} \\
\leq \delta( \| \Pi_\disc u^{(n)} \|_{L^2(\O)}+\| \nabla_\disc u^{(n)} \|_{L^2(\O)^d})
\end{multline*}} 
\State Exit ``for'' loop
\EndIf
\EndFor
\State Set $u=u^{(n+1)}$
\end{algorithmic}
\end{algorithm}
In each iteration $n$ in Algorithm \ref{algorm-fixed}, a linear VI must be solved. To compute its solution, introduce the linear fluxes $u\mapsto F^w_{K,\sigma}(u)$ (for $K\in\mesh$ and $\sigma\in\edges_K$) defined by:
for all $K \in \mesh$ and all $u,v,w \in X_\disc$,
\begin{align*}
\sum_{\sigma \in \mathcal{E}_K}|\sigma| F_{K,\sigma}^{w}(u)
(v_K-v_\sigma)={}&\int_K \Lambda(x,w_K)\nabla_\disc u\cdot\nabla_\disc v \ud x
\end{align*}    

Choosing $w=u^{(n)}$ in this relation, Problem \eqref{eq:nlalgorithm} can be recast as \cite{AD14}
\begin{align}
\sum_{\sigma \in \mathcal{E}_K}|\sigma|F_{K,\sigma}^{u^{(n)}}(u^{(n+1)})= |K|f_K, &\quad \forall K \in \mesh \label{hmm-nlsig1}\\
F_{K,\sigma}^{u^{(n)}}(u^{(n+1)})+F_{L,\sigma}^{u^{(n)}}(u^{(n+1)})= 0, &\quad \forall \sigma\in\mathcal E_{\rm int}\mbox{ with }
\mesh_\sigma=\{K,L\},\label{hmm-nlsig2}\\
u_\sigma^{(n+1)} = g, &\quad \forall \sigma \in \mathcal{E}_{\rm ext} \mbox{ such that }\sigma\subset  \Gamma_1,\label{hmm-nlsig3}\\
F_{K,\sigma}^{u^{(n)}}(u^{(n+1)})=0, &\quad \forall K \in \mesh\,,\forall \sigma \in \mathcal{E}_K \mbox{ such that }\sigma\subset  \Gamma_2,\label{hmm-nlsig4}\\
F_{K,\sigma}^{u^{(n)}}(u^{(n+1)})(u_\sigma^{(n+1)} - \overline x_{2,\sigma})=0 , & \quad\forall K \in \mesh\,, \forall \sigma \in \mathcal{E}_K \mbox{ such that }\sigma\subset  \Gamma_3,\label{hmm-nlsig5}\\
-F_{K,\sigma}^{u^{(n)}}(u^{(n+1)})\leq 0, &\quad \forall K \in \mesh\,, \forall \sigma \in \mathcal{E}_K \mbox{ such that }\sigma\subset  \Gamma_3,\label{hmm-nlsig6}\\
u_\sigma^{(n+1)} \leq \overline x_{2,\sigma},&\quad \forall \sigma \in \mathcal{E}_{\rm ext} \mbox{ such that }\sigma\subset \Gamma_{3}\label{hmm-nlsig7}.
\end{align}
Here $\overline x_{2,\sigma}$ denotes the second coordinate of the centre of mass of edge $\sigma$.
This choice corresponds to the approximate barrier $a_\disc$ of $a(x)=x_2$ described
in Remark \ref{rem:HMM.a.nc}. The monotonicity algorithm given in \cite{A2} is used to solve this non-linear system (see Algorithm \ref{algo:monoton}). It is proved in \cite{A2-23} that the number of
iterations of this monotonicity algorithm is bounded by the number of edges in $\Gamma_3$. 
This algorithm only requires, at each of its steps, to solve a square linear system on the
unknowns $((w_K)_{K\in\mesh},(w_\edge)_{\edge\in\edges})$. 
\begin{algorithm}
\caption{Monotonicity algorithm}\label{algo:monoton}
\begin{algorithmic}[1]
\State (Only the first time the algorithm is called):

\hspace*{-2em}\begin{minipage}{0.4\linewidth}
Set $A^{(0)}=\{ \edge \in \edges \; : \; \edge \subset \Gamma_3 \}$, $B^{(0)}=\emptyset$
and $I={\rm Card}(A^{(0)})$
\end{minipage} \Comment{$I$= theoretical bound on the iterations}
\While{$i\le I$} 
\State $A^{(i)}$ and $B^{(i)}$ being known, find the solution $w$ to the system
\eqref{hmm-nlsig1}--\eqref{hmm-nlsig4} together with
\begin{equation}\label{eq:monotone-flux}
\begin{aligned}
\-F_{K,\sigma}^{u^{(n)}}(w)= 0, &\quad \forall K \in \mesh\,, \forall \sigma \in \mathcal{E}_K \mbox{ such that }\sigma\in  B^{(i)},\\
w_\sigma=\overline x_{2,\sigma},&\quad \forall \sigma \in \mathcal{E}_{\rm ext} \mbox{ such that }\sigma \in A^{(i)}.
\end{aligned}
\end{equation}
\State Set $A^{(i+1)}=\{ \sigma\in A^{(i)}\; :\; -F_{K,\edge}^{u^{(n)}}(w)\leq 0 \} \cup \{ \sigma\in B^{(i)}\; :\; w_\sigma\geq \overline x_{2,\sigma} \} $
\State Set $B^{(i+1)}=\{ \sigma\in B^{(i)}\; :\; w_\edge<\overline{x}_{2,\edge} \} \cup \{ \sigma\in A^{(i)}\; :\; -F_{K,\sigma}^{u^{(n)}}(w)>0 \} $
\If {$A^{(i+1)}=A^{(i)}$ and $B^{(i+1)}=B^{(i)}$} 
\State {Exit ``while'' loop}
\EndIf
\EndWhile
\State Set $u^{(n+1)}=w$ \Comment{Solution to \eqref{hmm-nlsig1}--\eqref{hmm-nlsig7}}
\State (For next call of Algorithm \ref{algo:monoton}) Set $A^{(0)}=A^{(i+1)}$ and $B^{(0)}=B^{(i+1)}$
\end{algorithmic}
\end{algorithm}

\subsection{Test 1}\label{test1}

The geometry of the domain $\O$ representing the dam is illustrated in Fig \ref{fig:geomdam}. We have here
\begin{align*}
\Gamma_1={}&\{ (x_1,x_2)\in \RR^2:\, x_1=0 \mbox{ and } x_2\in[0,5] \} \\
&\cup \{ (x_1,x_2)\in \RR^2:\, x_1+x_2=7 \mbox{ and } x_2\in [0,1]\},\\
\Gamma_2={}&\{ (x_1,x_2)\in\RR^2:\, x_2=0 \},\\
\Gamma_3={}&\{ (x_1,x_2)\in \RR^2:\, x_2=5 \mbox{ and } x_1\in[0,2] \} \\
&\cup \{ (x_1,x_2)\in \RR^2:\, x_1+x_2=7 \mbox{ and } x_2\in (1,5]\}.
\end{align*}
\begin{center}
\begin{figure}[!h]
\input{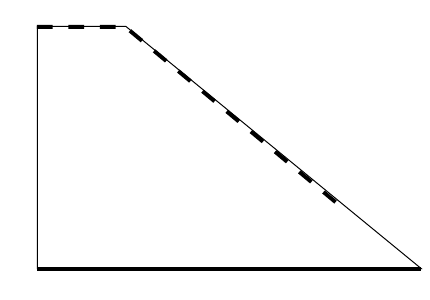_t}
\caption{Test 1, geometry of the dam.}
\label{fig:geomdam}
\end{figure}
\end{center}
The boundary conditions $g$ is defined by $g(0,x_2)=5$ for all $x_2\in [0,5]$ and $g(x_1,x_2)=1$, for
all $x_1\in (0,7)$.

The test is conducted on two different families of meshes shown in Fig.\ \ref{fig:mesh-domain-seepage}. The first family is (mostly) made of hexahedral meshes: \texttt{Hexa1} has 441 cells, $h_\mesh\approx 0.69$ and a number of edges in $\Gamma_3$ equal to $N=72$; \texttt{Hexa2} has 1681 cells, $h_\mesh\approx 0.36$ and $N=144$. The second family of meshes are `Kershaw' meshes from \cite{HH08}: \texttt{Kershaw1} has 2601 cells, $h_\mesh\approx 0.69$ and $N=92$; \texttt{Kershaw2} has 4626 cells, $h_\mesh\approx 0.52$ and $N=122$.

For the two hexagonal meshes, the fixed point algorithm (Algorithm \ref{algorm-fixed}) converges in 5 and 4 iterations, respectively. For \texttt{Kershaw1}, an oscillating
phenomenon occurs: for $n\ge 9$, $u^{(n)}\approx u^{(n-2)}$ but $u^{(n)}\not= u^{(n+1)}$; the Ka\c{c}anov algorithm does not converge, but essentially alternates between two vectors. To break this oscillation, we use an under-relaxation technique: when it is found that $|u^{(n)}-u^{(n-2)}|_\infty\le 10^{-2}|u^{(n-2)}|_\infty$, where $|\cdot|_\infty$ is the maximum norm of vectors in $X_{\disc}$, denoting by $\widetilde{u}^{(n+1)}$ the solution of \eqref{eq:nlalgorithm} we actually set $u^{(n+1)}=u^{(n)}+0.5(\widetilde{u}^{(n+1)}-u^{(n)})$; that is, we only progress halfway from $u^{(n)}$ to $\widetilde{u}^{(n+1)}$. This tweak enables the fixed-point algorithm
to converge in 11 iterations overall, which remains quite low given the distortion of the grid. We notice that this oscillation does not occur on \texttt{Kershaw2}, and that the fixed point algorithm converges on this mesh in 6 iterations, without the need for under-relaxation.

For all meshes, the maximum number of iterations of the monotonicity algorithm (Algorithm \ref{algo:monoton}) is also very far from the theoretical bound, with
$6$ and $9$ for the hexagonal meshes, and $5$ and $6$ for the Kershaw meshes. Note that, between two iterations of the fixed-point
algorithm, the sets $A$ and $B$ in the monotonicity algorithm are not reset. This means that the final sets obtained
at level $n$ of Algorithm \ref{algorm-fixed} are used as initial guesses at level $n+1$ of this algorithm.
This considerably reduces the number of iterations of Algorithm \ref{algo:monoton} and, after the first
2 or 3 iterations of Algorithm \ref{algorm-fixed},
the monotonicity algorithm converges in only 1 or 2 iterations.

\medskip

The monotonicity algorithm offers a way to determine the location of the seepage point. Following the interpretation of the model in \cite{A1}, the seepage point should split the free boundary $\Gamma_3$ into upper and lower parts in
the following way: (1) there is no flow on the upper part (so $F_{K,\sigma}=0$ for every edge $\sigma$ in this part);
(2) the pore pressure vanishes on the lower part (so $\bar u=x_2$ on this part); (3) both conditions are satisfied at the seepage point. The first and second conditions are naturally expressed by Equation \eqref{eq:monotone-flux}. Since any edge in the set $B$ cannot satisfy the last property (due to the strict inequality $w_\sigma < \overline{x}_{2,\sigma}$), the
seepage point does not lie on those edges. This point can thus be located at the edge $\sigma$ in the set $A$
whose midpoint has the largest ordinate $\overline{x}_{2,\sigma}$. Considering the mesh size and the fact that the HMM solution is computed at the mid-point of edges, our numerical results locate the seepage point
at an $x_2$-coordinate in $[3.31,3.65]$ for \texttt{Hexa1}, and in $[3.28,3.63]$ for \texttt{Kershaw1}. 
The seepage position moves only by $1\%$ from \texttt{Hexa1} to \texttt{Hexa2}, and by $2\%$ from \texttt{Kershaw1} to \texttt{Kershaw2}. This location is in perfect agreement with the numerical tests in \cite{A1}.

\begin{figure}[ht]
	\begin{center}
	\begin{tabular}{cc}
	\includegraphics[width=0.45\linewidth]{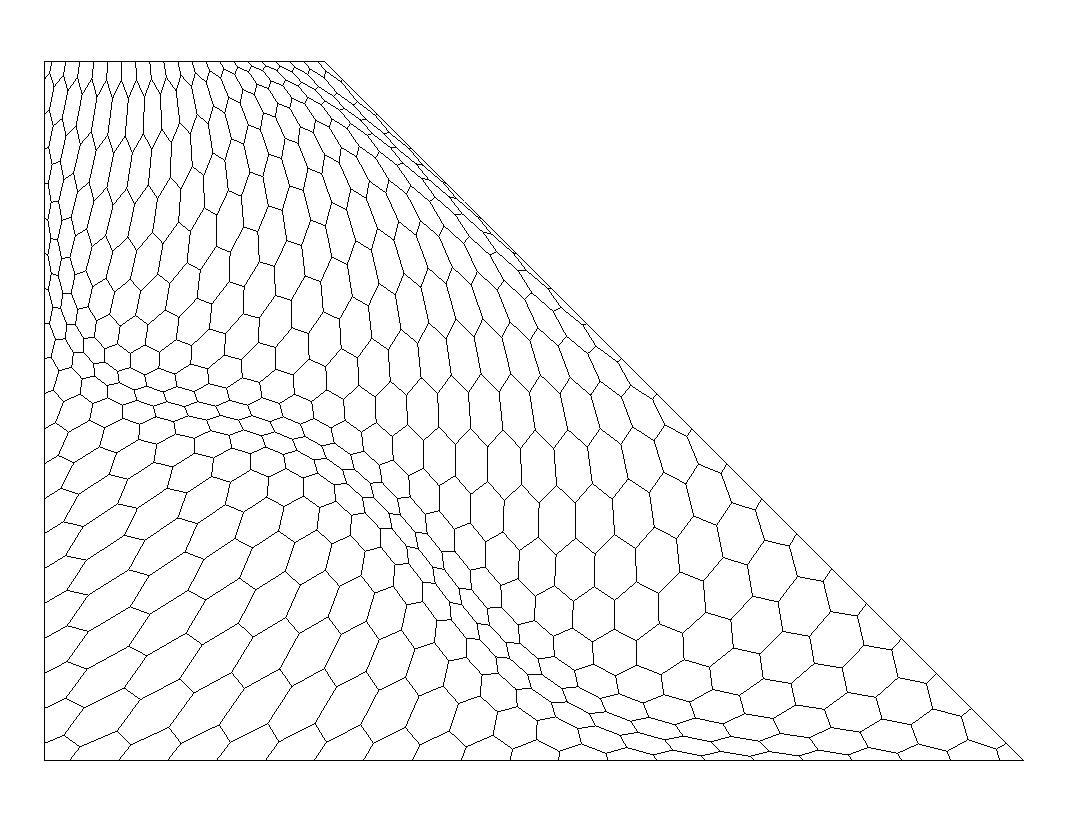} & \includegraphics[width=0.45\linewidth]{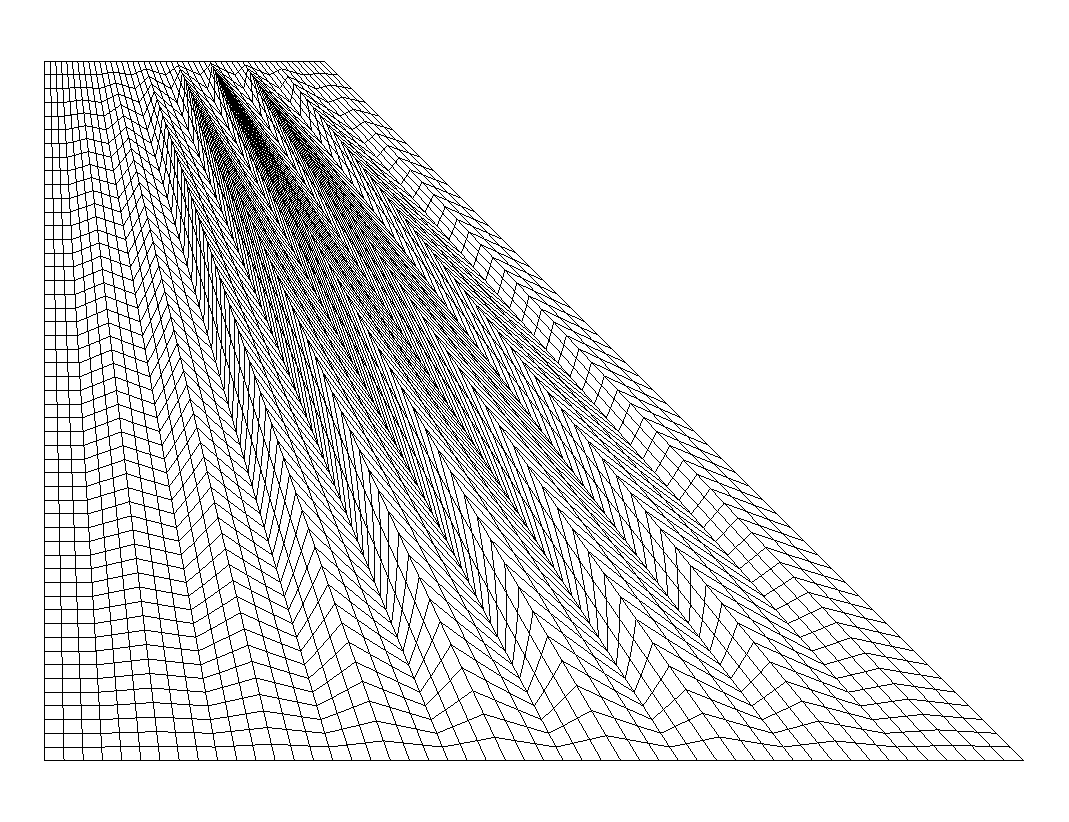}\\
	\texttt{Hexa1} (441 cells) & \texttt{Kershaw1} (2601 cells)\\
	\includegraphics[width=0.45\linewidth]{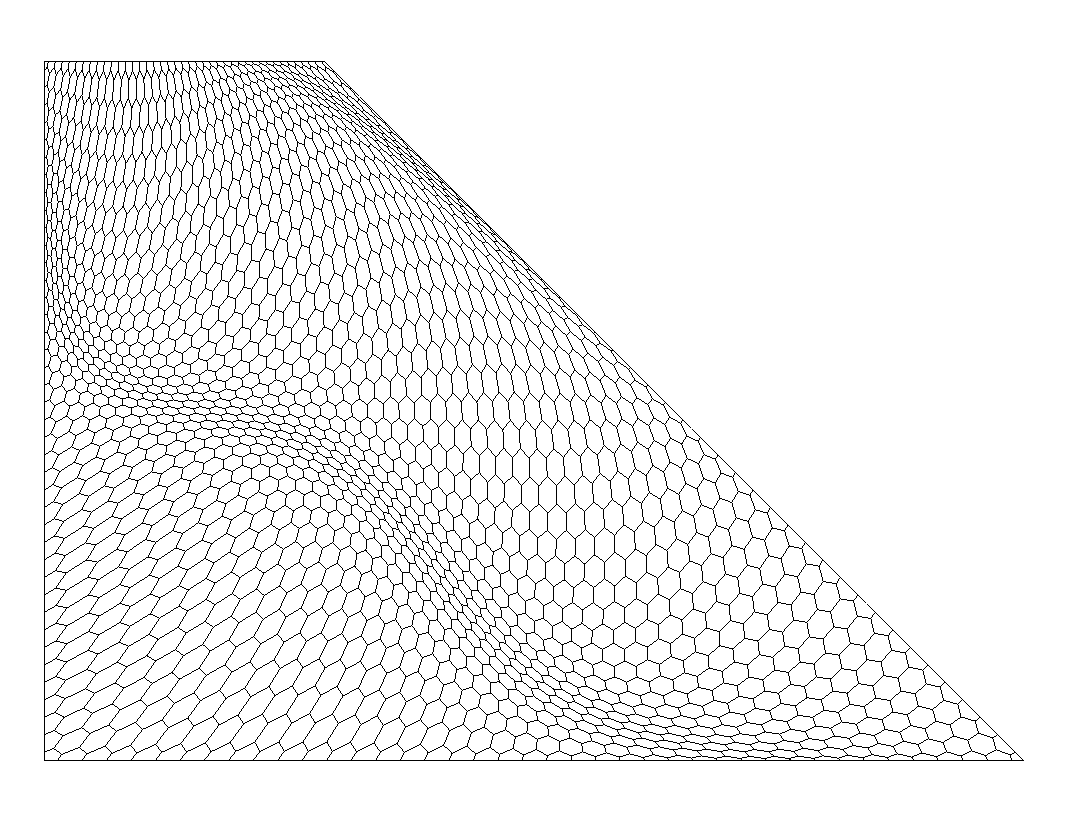} & \includegraphics[width=0.45\linewidth]{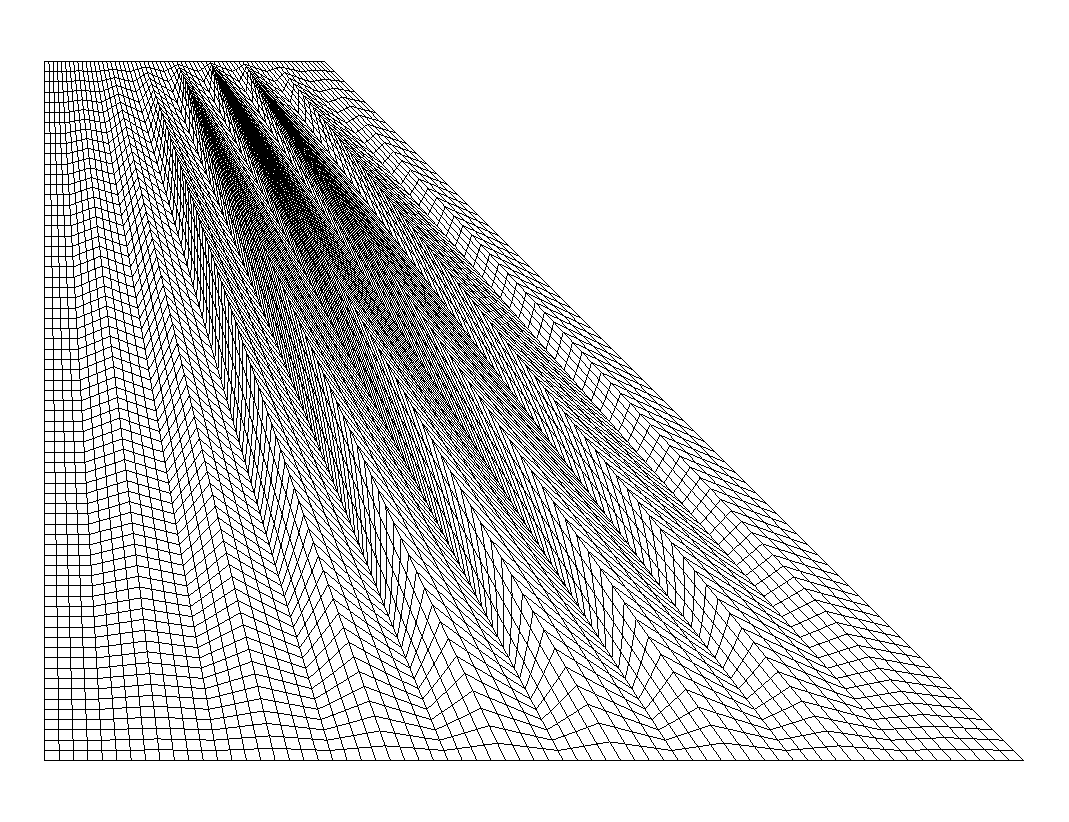}\\
	\texttt{Hexa2} (1681 cells) & \texttt{Kershaw2} (4624 cells)\\
	\end{tabular}
	\end{center}
	\caption{Test 1, elements of the hexahedral and Kershaw families of meshes.}
	\label{fig:mesh-domain-seepage}
\end{figure}

\begin{figure}[ht]
	\begin{center}
	\begin{tabular}{cc}
	\includegraphics[width=.5\linewidth]{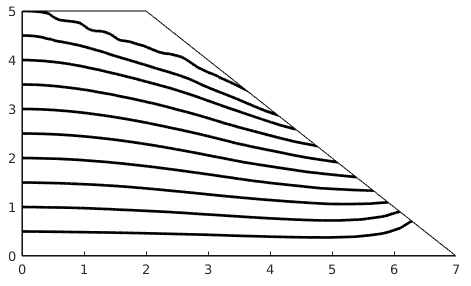} & \includegraphics[width=.5\linewidth]{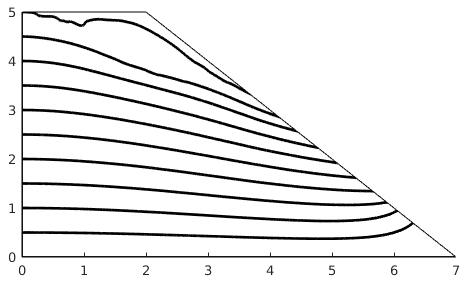}\\
	\texttt{Hexa1} (441 cells) & \texttt{Kershaw1} (2601 cells)\\
	\includegraphics[width=.5\linewidth]{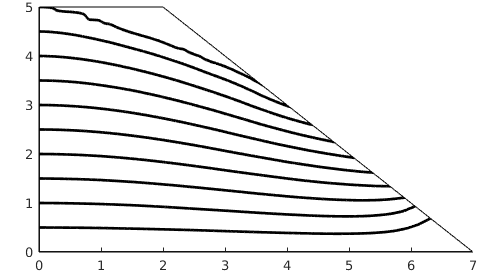} & \includegraphics[width=.5\linewidth]{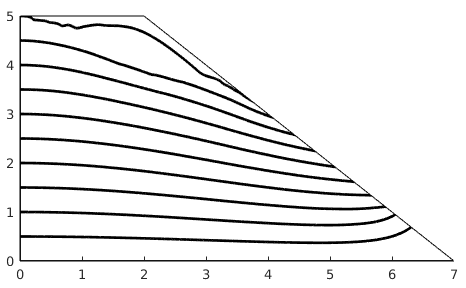}\\
	\texttt{Hexa2} (1681 cells) & \texttt{Kershaw2} (4624 cells)\\
	\end{tabular}
	\end{center}
	\caption{Test 1, streamlines on the hexahedral and Kershaw meshes.}
	\label{fig:streamlines-seepage}
\end{figure}


Fig \ref{fig:streamlines-seepage} shows the streamlines of the Darcy velocity field of the solution.
As expected, the distorted cells at the top of the domain provoke
perturbations of the streamlines there. Quite remarkably, though, this grid distortion does not impact the location of the seepage point. Elsewhere, the streamlines are very similar to the ones in \cite{A1}.
We notice that the streamlines are not extremely impacted by the mesh refinements, and remain rather distorted at the top of the domain for the Kershaw meshes. This is probably due to the fact that the regularity factor $\theta_\mesh$ defined in \eqref{reg.HMM.1} remains quite large for all these meshes ($\approx 164$ for \texttt{Kershaw1}, $\approx 173$ for \texttt{Kershaw2}), with maximum reached on cells where the top streamline flows. As an element of comparison, for hexahedral meshes the local regularity factors of the cells around this streamline is about $6$.

\subsection{Test 2}\label{test2}
The second test is on the seepage model with geometry of $\O$ describing a homogeneous isotropic embankment dam with horizontal under drain as shown in Fig.\ \ref{fig:geomdam2}. The boundary is split into
\begin{align*}
\Gamma_1={}&\{ (x_1,x_2)\in \RR^2:\, 29.81 x_2-20 x_1=0 \mbox{ and } x_2\in[0,18] \},\\
\Gamma_2={}&\{ (x_1,x_2)\in\RR^2:\, x_2=0 \mbox{ and } x_1 \in [0,59.62] \},\\
\Gamma_3={}&\{ (x_1,x_2)\in \RR^2:\, 29.81 x_2-20 x_1=0 \mbox{ and } x_2\in(18,20] \} \\
&\cup \{ (x_1,x_2)\in \RR^2:\, x_1\in [29.81,38.75]\mbox{ and } x_2=20\}\\
&\cup \{ (x_1,x_2)\in \RR^2:\, 29.81 x_2+20 x_1=1371.20 \mbox{ and } x_2\in [0,20]\}\\
&\cup \{ (x_1,x_2)\in \RR^2:\, x_2=0 \mbox{ and } x_1\in (59.62, 68.56]\}.
\end{align*}
The boundary data $g$ on $\Gamma_1$ is constant equal to $18$.
\begin{center}
\begin{figure}[!h]
\input{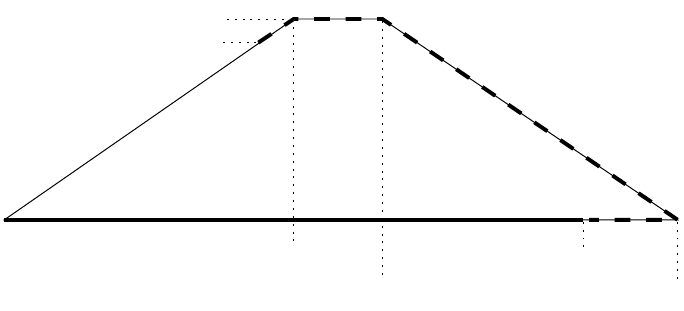_t}
\caption{Test 2, sketch of geometry of the dam.}
\label{fig:geomdam2}
\end{figure}
\end{center}
\begin{figure}[ht]
	\begin{center}
	\begin{tabular}{cc}
	\includegraphics[width=1\linewidth]{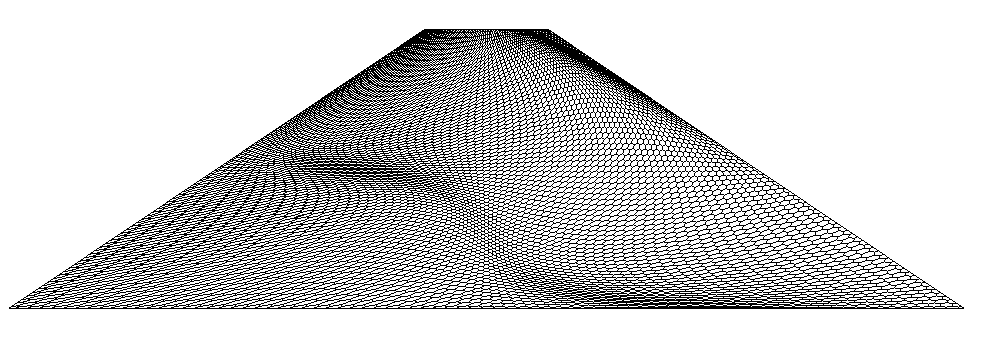} \\
	\texttt{Hexa3} (6561 cells)\\[.8em]
	\includegraphics[width=1\linewidth]{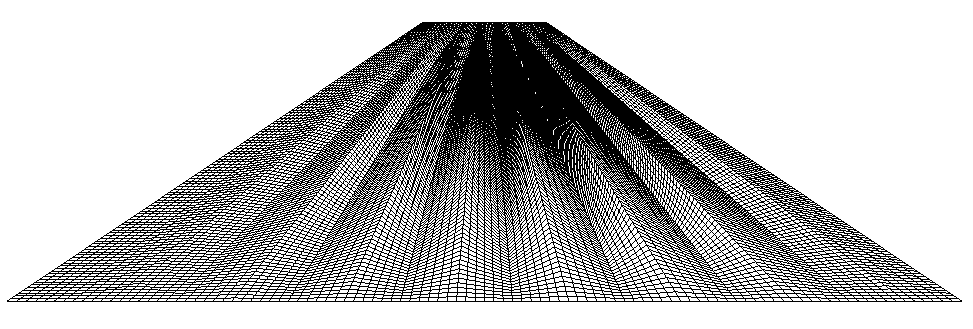} \\
	\texttt{Kershaw3} (10404 cells)\\
	\end{tabular}
	\end{center}
	\caption{Test 2, hexahedral and Kershaw meshes.}
	\label{fig:mesh-domain-seepage-test2}
\end{figure}

\begin{figure}[ht]
	\begin{center}
	\begin{tabular}{cc}
	\includegraphics[width=1\linewidth]{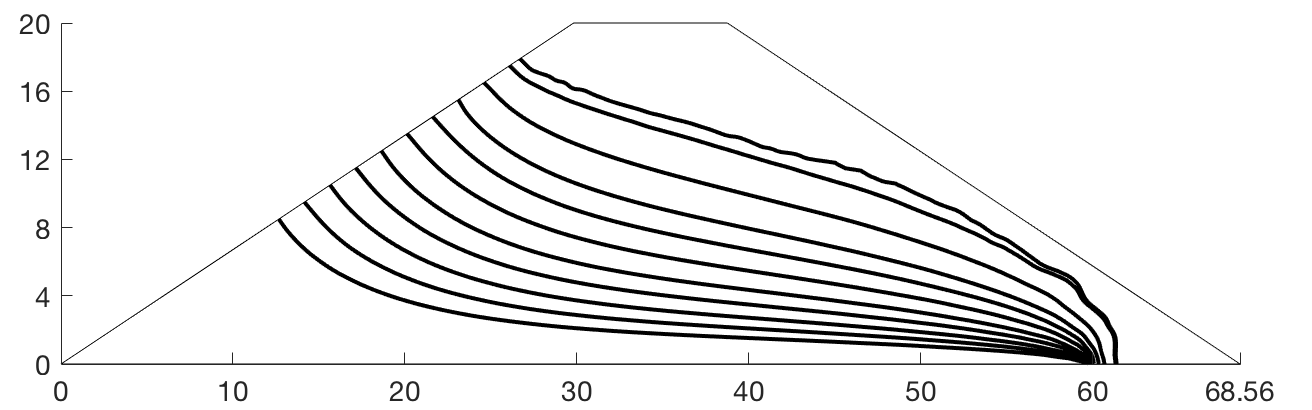} \\
	\texttt{Hexa3} (6561 cells)\\
	\includegraphics[width=1\linewidth]{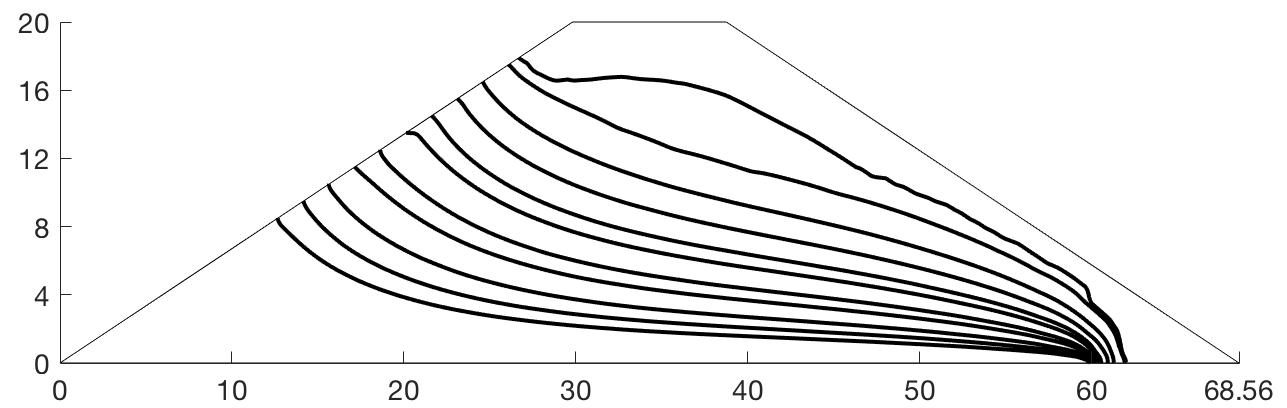} \\
	\texttt{Kershaw3} (10404 cells)\\
	\end{tabular}
	\end{center}
	\caption{Test 2, streamlines on the hexahedral and Kershaw meshes.}
	\label{fig:streamlines-test2}
\end{figure}

The test is performed on two different meshes, shown in Fig.\ \ref{fig:mesh-domain-seepage-test2}.
The first mesh (\texttt{Hexa3}) is (mostly) build on hexagonal cells with 6561 cells, maximum size $h_\mesh\approx 2.06$, and a number of edges in $\Gamma_3$ equal to $N=306$. The second one (\texttt{Kershaw3}) is a ``Kershaw mesh'' with 10404 cells, a maximum size $h_\mesh\approx 1.50$, and $N=181$. Using the under-relaxation technique detailed in Section \ref{test1}, the fixed point algorithm converges in 15 iterations for \texttt{Hexa3} and in 18 iterations for \texttt{Kershaw3}.
For both meshes, the maximum number of iterations of the monotonicity algorithm is 4, which is, as in Test 1, considerably lower than the number of edges in $\Gamma_3$.

\medskip

The streamlines of the Darcy velocity field are plotted in Fig.\ \ref{fig:streamlines-test2}. Despite perturbations of the streamlines at the top of the dam caused by distorted cells there, the streamlines behave mostly well and are similar to the ones presented in \cite{A1}. The numerical results indicate that the seepage position, corresponding to the point where the top streamlines meet the bottom of the dam, is approximately at $(61.6,0)$ for \texttt{Hexa3} and $(61.9,0)$ for \texttt{Kershaw3}.


\section{Appendix: interpolant for the HMM method}\label{sec:appen}

Let $\polyd$ be a polytopal mesh of $\O$, and
select weights $\omega=(\omega_K)_{K\in\mesh}$ with, for all $K\in\mesh$, $\omega_K\in L^\infty(K)$ such that
\begin{equation}\label{cond.weights}
\frac{1}{|K|}\int_K \omega_K(x)\ud x=1\quad\mbox{ and }\quad\frac{1}{|K|}\int_K x\omega_K(x)\ud x=x_K.
\end{equation}
Recalling the definition \eqref{def:XDHMM} of the space $X_\disc$,
the interpolant $P^\omega_\disc:W^{1,p}(\O)\to X_\disc$ is then defined by:
\begin{equation}\label{def:Ppolyd}
\begin{aligned}
&\forall \varphi\in W^{1,p}(\O)\,,\;P^\omega_\disc\varphi=((\varphi^\omega_K)_{K\in\mesh},(\varphi_\edge)_{\edge\in\edges})
\mbox{ with }\\
&\qquad\forall K\in\mesh\,,\; \varphi^\omega_K=\frac{1}{|K|}\int_K \omega_K(x)\varphi(x)\ud x\,,\\
&\qquad\forall \edge\in\edges\,,\;\varphi_\edge=\frac{1}{|\edge|}\int_\edge \varphi(x)\ud s(x).
\end{aligned}\end{equation}

This interpolant enjoys nice approximation properties for the HMM gradient discretisation. Before stating and proving these, we first establish the existence of weights with suitable properties. Note that
\cite[Lemma A.7]{DN17} already gives a construction of such weights (as linear functions),
without the positivity property.

\begin{lemma}[Existence of weights]\label{lem:exist.weights}
Let $\polyd$ be a polytopal mesh and let 
\[
\varrho_\polyd = \max_{K\in\mesh}\max_{\edge\in\edgescv}\frac{h_K}{d_{K,\edge}}.
\]
Then there exists weights $\omega=(\omega_K)_{K\in\mesh}$
satisfying \eqref{cond.weights} and such that
\begin{equation}\label{bound.weights}
\forall K\in\mesh\,,\;\forall x\in K\,,\; 0\le \omega_K(x)\le \varrho_\polyd^{d}.
\end{equation}
\end{lemma}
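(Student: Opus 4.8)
The plan is to take each $\omega_K$ to be a constant multiple of the indicator function of the largest ball centred at $x_K$ that is contained in $K$. Precisely, I would set $r_K:=\min_{\edge\in\edgescv}d_{K,\edge}>0$, let $B_K:=B(x_K,r_K)$ be the open ball of radius $r_K$ about $x_K$, and define $\omega_K:=\frac{|K|}{|B_K|}\mathbf{1}_{B_K}$. Granted that $B_K\subseteq K$ (the only real point, treated below), both requirements in \eqref{cond.weights} are then immediate: $\frac{1}{|K|}\int_K\omega_K\ud x=\frac{|B_K|}{|B_K|}=1$, and, since a Euclidean ball has its barycentre at its centre, $\frac{1}{|K|}\int_K x\,\omega_K(x)\ud x=\frac{1}{|B_K|}\int_{B_K}x\ud x=x_K$.

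The step I expect to require a genuine (if short) argument is the geometric inclusion $B_K\subseteq K$, which I would derive from the strict $x_K$-star-shapedness of $K$. Let $x\in B_K$; we may assume $x\neq x_K$. Since $K$ is open and bounded and $x_K\in K$, the ray $\{x_K+t(x-x_K):t\geq 0\}$ meets $\dr K$; let $z$ be the first such intersection point. Because $\dr K=\bigcup_{\edge\in\edgescv}\overline\edge$, we have $z\in\overline\edge$ for some $\edge\in\edgescv$, and as $z$ lies in the hyperplane carrying $\edge$ we get $|z-x_K|\geq d_{K,\edge}\geq r_K>|x-x_K|$. Hence $x$ lies strictly between $x_K$ and $z$, i.e.\ $x\in[x_K,z)$, and this half-open segment is contained in $K$ by star-shapedness (since $z\in\overline K$). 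Therefore $x\in K$, proving $B_K\subseteq K$.

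It then remains to check the bound \eqref{bound.weights}, which is routine. On $K$ we have $0\leq\omega_K\leq|K|/|B_K|$. Writing $\omega_d$ for the volume of the unit ball of $\RR^d$, the fact that $\operatorname{diam}(K)=h_K$ and $x_K\in K$ gives $K\subseteq\overline{B(x_K,h_K)}$, hence $|K|\leq\omega_d h_K^d$, whereas $|B_K|=\omega_d r_K^d$; thus $|K|/|B_K|\leq(h_K/r_K)^d$. Finally, by definition of $\varrho_\polyd$ one has $d_{K,\edge}\geq h_K/\varrho_\polyd$ for every $\edge\in\edgescv$, so $r_K\geq h_K/\varrho_\polyd$ and $(h_K/r_K)^d\leq\varrho_\polyd^d$. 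This yields $\omega_K(x)\leq\varrho_\polyd^d$ for all $x\in K$, completing the construction. (Note this produces non-negative but only piecewise-constant weights, complementing the affine construction of \cite[Lemma A.7]{DN17} which lacks positivity.)
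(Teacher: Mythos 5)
Your proof is correct and follows essentially the same route as the paper's: both take $\omega_K$ to be the normalised indicator of a ball centred at $x_K$ and contained in $K$, verify \eqref{cond.weights} by symmetry of the ball, and bound $|K|/|B_K|$ by comparing the radius of the ball to $h_K$. The only differences are cosmetic --- you use the local radius $\min_{\edge\in\edgescv}d_{K,\edge}$ instead of $\varrho_\polyd^{-1}h_K$, and you prove the inclusion $B_K\subseteq K$ directly from strict $x_K$-star-shapedness rather than quoting it from the GDM monograph.
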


\begin{proof}
By \cite[Lemma B.1]{S1}, for all $K\in\mesh$ the ball $B_K$ of center $x_K$ and radius $\varrho_\polyd^{-1}h_K$
is fully contained in $K$. Let us define $\omega=(\omega_K)_{K\in\mesh}$ by
\[
 \forall K\in\mesh\,,\;\forall x\in K\,,\; \omega_K(x)=\left\{\begin{array}{ll} \frac{|K|}{|B_K|}&\mbox{ if $x\in B_K$},\\
0&\mbox{ if $x\not\in B_K$}.\end{array}\right.
\]
Denoting by $V_1$ the volume of the unit ball in $\RR^d$, since $K$ is contained in the ball of center $x_K$ and
radius $h_K$ we have $|K|\le V_1 h_K^d$. On the other hand, $|B_K|=V_1 (\varrho_{\polyd}^{-1}h_K)^d$.
Hence, $|K|/|B_K|\le \varrho_{\polyd}^d$ and \eqref{bound.weights} is satisfied.

The relations \eqref{cond.weights} are trivial since $\int_{B_K}1\ud x=|B_K|$ and $\int_{B_K} x\ud x=x_K|B_K|$
(as $x_K$ is the center of $B_K$).
\end{proof}

\begin{proposition}[Approximation properties of $P^\omega_\disc$]\label{prop:interp.HMM}
Let $(\polyd_m)_{m\in\NN}$ be a sequence of polytopal meshes such that $h_{\mesh_m}\to 0$ as $m\to\infty$
and, for some $\theta>0$, \eqref{reg.HMM.1} holds for all $m\in\NN$.
For each $m\in\NN$, take weights $\omega_m=(\omega_{m,K})_{K\in\mesh_m}$ given by Lemma \ref{lem:exist.weights}.

For $m\in\NN$, let $\disc_m=(X_{\disc_m},\Pi_{\disc_m},\mathbb{T}_{\disc_m},\nabla_{\disc_m})$ be the HMM gradient discretisations defined on $\polyd_m$ by \eqref{gd-hmm-nlsign}, without specific boundary
conditions.
Then, for all $\varphi\in W^{1,p}(\O)$, as $m\to\infty$,
\begin{align*}
&\Pi_{\disc_m}(P^{\omega_m}_{\disc_m}\varphi)\to\varphi\mbox{ in }L^p(\O),\\
&\mathbb{T}_{\disc_m}(P^{\omega_m}_{\disc_m}\varphi)\to \gamma(\varphi)\mbox{ in }L^p(\partial\O),\\
&\nabla_{\disc_m}(P^{\omega_m}_{\disc_m}\varphi)\to\nabla\varphi\mbox{ in }{L^p(\O)^d}.
\end{align*}
\end{proposition}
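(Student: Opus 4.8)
The plan is the standard two-stage argument for interpolants in the GDM: prove the three convergences first for smooth functions, then extend to all of $W^{1,p}(\O)$ by density, which forces a uniform-in-$m$ control of the three linear maps $T_m^0:\varphi\mapsto\Pi_{\disc_m}(P^{\omega_m}_{\disc_m}\varphi)$, $T_m^\partial:\varphi\mapsto\mathbb{T}_{\disc_m}(P^{\omega_m}_{\disc_m}\varphi)$ and $T_m^\nabla:\varphi\mapsto\nabla_{\disc_m}(P^{\omega_m}_{\disc_m}\varphi)$, seen as operators from $W^{1,p}(\O)$ into $L^p(\O)$, $L^p(\partial\O)$ and $L^p(\O)^d$ respectively. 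The stability of $T_m^0$ is immediate: Hölder's inequality and $0\le\omega_{m,K}\le\varrho_{\polyd_m}^d\le\theta^d$ (from Lemma \ref{lem:exist.weights} and \eqref{reg.HMM.1}) give $|K|\,|\varphi^{\omega_m}_K|^p\le\theta^{dp}\int_K|\varphi|^p\ud x$, hence $\|T_m^0\varphi\|_{L^p(\O)}\le\theta^d\|\varphi\|_{L^p(\O)}$. For $T_m^\partial$, Jensen's inequality gives $|\edge|\,|\varphi_\edge|^p\le\int_\edge|\gamma(\varphi)|^p\ud s$ for $\edge\in\edgesext$, so $\|T_m^\partial\varphi\|_{L^p(\partial\O)}\le\|\gamma(\varphi)\|_{L^p(\partial\O)}\le C\|\varphi\|_{W^{1,p}(\O)}$ by the trace theorem. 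The uniform bound $\|T_m^\nabla\varphi\|_{L^p(\O)^d}\le C\|\varphi\|_{W^{1,p}(\O)}$ follows from local trace--Poincar\'e inequalities on the pyramids $D_{K,\edge}$ together with \eqref{reg.HMM.1} and the norm equivalence \eqref{reg.HMM.2}; this is exactly the argument that establishes the stability of the HMM interpolant in \cite{S1}, and it is the technical core of the proof.

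Next I would treat $\varphi\in C^\infty(\overline\O)$. The function and trace reconstructions are easy: since $\frac1{|K|}\int_K\omega_{m,K}\ud x=1$ and $\frac1{|\edge|}\int_\edge\ud s=1$, one has $|\varphi^{\omega_m}_K-\varphi(x)|\le\|\nabla\varphi\|_{L^\infty(\O)}h_K$ for $x\in K$ and $|\varphi_\edge-\gamma(\varphi)(x)|\le\|\nabla\varphi\|_{L^\infty(\O)}h_\edge$ for $x\in\edge$, which give uniform convergence of $\Pi_{\disc_m}(P^{\omega_m}_{\disc_m}\varphi)$ to $\varphi$ and of $\mathbb{T}_{\disc_m}(P^{\omega_m}_{\disc_m}\varphi)$ to $\gamma(\varphi)$, hence $L^p$-convergence on $\O$ and on $\partial\O$. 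For the gradient the key point is that $\nabla_{\disc_m}\circ P^{\omega_m}_{\disc_m}$ is \emph{exact on affine functions}: if $\ell(x)=\mathbf{g}\cdot x+c$ then $\ell_\edge=\mathbf{g}\cdot\centeredge+c$, and the elementary mesh identities $\sum_{\edge\in\edges_K}|\edge|\mathbf{n}_{K,\edge}=0$ and $\sum_{\edge\in\edges_K}|\edge|\,\centeredge\otimes\mathbf{n}_{K,\edge}=|K|\,\mathrm{Id}$ yield $\nabla_K(P^{\omega_m}_{\disc_m}\ell)=\mathbf{g}$, while the \emph{second} relation in \eqref{cond.weights} forces $\ell^{\omega_m}_K=\mathbf{g}\cdot x_K+c$, whence $R_K(P^{\omega_m}_{\disc_m}\ell)=0$ and therefore $\nabla_{\disc_m}(P^{\omega_m}_{\disc_m}\ell)=\mathbf{g}=\nabla\ell$ on each $D_{K,\edge}$. (Without that second condition $R_{K,\edge}$ would only be $O(h_K|\mathbf{g}|)$ and, divided by $d_{K,\edge}\simeq h_K$, would create an $O(1)$ error — this is precisely why the weights are required to satisfy both normalisations in \eqref{cond.weights}.) For a general $\varphi\in C^\infty(\overline\O)$, write $\varphi=\ell_K+\psi_K$ on each cell $K$, with $\ell_K$ the first-order Taylor expansion of $\varphi$ at $x_K$; since $\nabla_{\disc_m}v$ on $K=\bigcup_{\edge\in\edges_K}D_{K,\edge}$ depends only, and linearly, on $(v_K,(v_\edge)_{\edge\in\edges_K})$, one gets $\nabla_{\disc_m}(P^{\omega_m}_{\disc_m}\varphi)=\nabla\varphi(x_K)+\nabla_{\disc_m}(P^{\omega_m}_{\disc_m}\psi_K)$ on $K$, and $\|\psi_K\|_{L^\infty(\overline K)}\le Ch_K^2\|\varphi\|_{C^2(\overline\O)}$ together with the explicit form of the HMM gradient, \eqref{reg.HMM.2} and $d_{K,\edge}\ge\theta^{-1}h_K$ bounds $\|\nabla_{\disc_m}(P^{\omega_m}_{\disc_m}\psi_K)\|_{L^p(K)^d}^p$ by $Ch_{\mesh_m}^p|K|\,\|\varphi\|_{C^2(\overline\O)}^p$; summing over $K$ and comparing $\nabla\varphi(x_K)$ to $\nabla\varphi$ on $K$ gives $\nabla_{\disc_m}(P^{\omega_m}_{\disc_m}\varphi)\to\nabla\varphi$ in $L^p(\O)^d$.

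Finally, since $\O$ is polytopal hence Lipschitz, $C^\infty(\overline\O)$ is dense in $W^{1,p}(\O)$; given $\varphi\in W^{1,p}(\O)$ and $\varepsilon>0$, pick $\varphi_\varepsilon\in C^\infty(\overline\O)$ with $\|\varphi-\varphi_\varepsilon\|_{W^{1,p}(\O)}\le\varepsilon$ and write $\Pi_{\disc_m}(P^{\omega_m}_{\disc_m}\varphi)-\varphi=T_m^0(\varphi-\varphi_\varepsilon)+(T_m^0\varphi_\varepsilon-\varphi_\varepsilon)+(\varphi_\varepsilon-\varphi)$; the uniform bound on $T_m^0$, the convergence of the middle term (previous step applied to $\varphi_\varepsilon$) and $\|\varphi_\varepsilon-\varphi\|_{L^p(\O)}\le\varepsilon$ give $\limsup_{m\to\infty}\|\Pi_{\disc_m}(P^{\omega_m}_{\disc_m}\varphi)-\varphi\|_{L^p(\O)}\le C\varepsilon$, and the same splitting (using continuity of $\gamma$, resp. of $\varphi\mapsto\nabla\varphi$) handles $\mathbb{T}_{\disc_m}\circ P^{\omega_m}_{\disc_m}$ and $\nabla_{\disc_m}\circ P^{\omega_m}_{\disc_m}$; letting $\varepsilon\to0$ concludes. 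The one genuinely delicate step is the uniform $W^{1,p}\to L^p$ gradient-stability bound for $T_m^\nabla$ (and its local version used above), where the hypotheses \eqref{reg.HMM.1}--\eqref{reg.HMM.2} are indispensable; everything else is routine once the exactness of the reconstruction on affine functions — the reason both normalisations in \eqref{cond.weights} are imposed — has been observed.
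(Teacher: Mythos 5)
Your proposal is correct and shares the paper's overall three-stage architecture (uniform stability of the interpolation operators, convergence for smooth functions, density), but it diverges in the one step that carries most of the content: the convergence of $\nabla_{\disc_m}(P^{\omega_m}_{\disc_m}\varphi_\varepsilon)$ for smooth $\varphi_\varepsilon$. You prove it directly by showing that $\nabla_{\disc_m}\circ P^{\omega_m}_{\disc_m}$ is exact on affine functions --- using $\sum_{\edge\in\edgescv}|\edge|\bfn_{K,\edge}=0$, $\sum_{\edge\in\edgescv}|\edge|\,\centeredge\otimes\bfn_{K,\edge}=|K|\,\mathrm{Id}$, and the second moment condition in \eqref{cond.weights} to kill $R_K$ --- and then Taylor-expanding cell by cell, bounding the second-order remainder through the explicit form of the reconstructed gradient, \eqref{reg.HMM.2} and $h_K/d_{K,\edge}\le\theta$. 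The paper instead compares $P^{\omega_m}_{\disc_m}\varphi_\varepsilon$ with the nodal interpolant $\widetilde{P}_{\polyd_m}\varphi_\varepsilon=((\varphi_\varepsilon(x_K))_K,(\varphi_\varepsilon(\centeredge))_\edge)$, for which convergence of the reconstructed gradient is quoted from \cite[Lemma 12.8]{S1}, and controls the difference in a discrete $W^{1,p}$-seminorm via two $O(h^2)$ estimates (Taylor at the centre of mass of $\edge$, and \cite[Lemma A.7]{DN17} for the weighted cell value, which is exactly where \eqref{cond.weights} enters in their version). Your route is more self-contained and makes the role of both normalisations in \eqref{cond.weights} transparent; the paper's route outsources the exactness-on-affines mechanism to the cited lemma and only has to estimate perturbations. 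Both are valid.

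One caveat on your stability step for $T^\nabla_m$: you assert the bound $\|\nabla_{\disc_m}(P^{\omega_m}_{\disc_m}\psi)\|_{L^p(\O)^d}\le C\|\nabla\psi\|_{L^p(\O)^d}$ is ``exactly the argument that establishes the stability of the HMM interpolant in \cite{S1}''. It is not quite: the interpolant in \cite{S1} uses the plain cell average (weight $1$), whereas here the cell value is the $\omega_{m,K}$-weighted average, and the quantity $|\psi_\edge-\psi^{\omega_m}_K|/d_{K,\edge}$ is not the one controlled there. The paper supplies the missing piece explicitly: it invokes \cite[Proposition 7.15]{S1} for the constant-weight interpolant $P^1_{\disc_m}$ and then bounds $\vert P^1_{\disc_m}\psi-P^{\omega_m}_{\disc_m}\psi\vert_{\polyd_m,p}$ using $\|\omega_{m,K}\|_{L^\infty}\le\theta^d$, Jensen and a local Poincar\'e inequality, noting that the two interpolants share the same face values so only cell values differ. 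Your appeal to local trace--Poincar\'e inequalities on the $D_{K,\edge}$ would also work, but as written the step is a citation of a result that does not literally cover the weighted case; an explicit comparison of the two cell averages (or a direct local estimate) is needed to close it.
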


\begin{proof}

Note that by choice of $\theta$ and by Lemma \ref{lem:exist.weights}, for all $m\in\NN$
and $K\in \mesh_m$, $\|\omega_{m,K}\|_{L^\infty(K)}\le \theta^{d}$.

\medskip

\textbf{Step 1}: convergence of the function and trace reconstructions.

Fix $\varepsilon>0$ and take $\varphi_\varepsilon\in C^\infty_c(\RR^d)$ such that
$\|\varphi-\varphi_\varepsilon\|_{W^{1,p}(\O)}\le \varepsilon$. A triangle inequality yields
\begin{align}
\|\Pi_{\disc_m}(P^{\omega_m}_{\disc_m}\varphi)-\varphi\|_{L^p(\O)}
\le{}&\|\Pi_{\disc_m}(P^{\omega_m}_{\disc_m}(\varphi-\varphi_\varepsilon))\|_{L^p(\O)}\nonumber\\
&+\|\Pi_{\disc_m}(P^{\omega_m}_{\disc_m}\varphi_\varepsilon)-\varphi_\varepsilon\|_{L^p(\O)}
+\|\varphi_\varepsilon-\varphi\|_{L^p(\O)}.\label{interp.PiD}
\end{align}
By Jensen's inequality, for any $\psi\in W^{1,p}(\O)$ and $K\in\mesh_m$,
\[
|\psi^{\omega_m}_K|^p\le \frac{1}{|K|} \int_K |\omega_{m,K}(x)|^p|\psi(x)|^p\ud x\le
\frac{\|\omega_{m,K}\|_{L^\infty(K)}^p}{|K|} \int_K |\psi(x)|^p\ud x.
\]
Multiplying by $|K|$, summing over $K\in\mesh_m$, and recalling the definition of $\Pi_{\disc_m}$ gives, by choice of $\theta$,
\begin{equation}\label{key:stab}
\|\Pi_{\disc_m}(P^{\omega_m}_{\disc_m}\psi)\|_{L^p(\O)}\le \theta^d\|\psi\|_{L^p(\O)}.
\end{equation}
Using this estimate  with $\psi=\varphi-\varphi_\varepsilon$ in \eqref{interp.PiD} yields
\begin{equation}\label{cv.PDm}
\|\Pi_{\disc_m}(P^{\omega_m}_{\disc_m}\varphi)-\varphi\|_{L^p(\O)}
\le (\theta^d+1)\varepsilon+\|\Pi_{\disc_m}(P^{\omega_m}_{\disc_m}\varphi_\varepsilon)-\varphi_\varepsilon\|_{L^p(\O)}.
\end{equation}
For all $K\in\mesh_m$ and $y\in K$, by \eqref{cond.weights} and choice of $\theta$ we have
\[
|(\varphi_\varepsilon)^{\omega_m}_K-\varphi_\varepsilon(y)|=\frac{1}{|K|}\left|\int_K \omega_{m,K}(x)\left[\varphi_\varepsilon(x)-\varphi_\varepsilon(y)\right]
\ud x\right|\le \theta^d h_K \sup_{\RR^d}|\nabla\varphi_\varepsilon|.
\]
Hence, $\Pi_{\disc_m}(P^{\omega_m}_{\disc_m}\varphi_\varepsilon)\to \varphi_\varepsilon$ uniformly
on $\O$ as $m\to\infty$. Taking the superior limit as $m\to\infty$ of \eqref{cv.PDm} therefore leads to
\[
\limsup_{m\to\infty}\|\Pi_{\disc_m}(P^{\omega_m}_{\disc_m}\varphi)-\varphi\|_{L^p(\O)}\le (\theta^d+1)\varepsilon.
\]
Letting $\varepsilon\to 0$ concludes the proof that $\Pi_{\disc_m}(P^{\omega_m}_{\disc_m}\varphi)\to
\varphi$ in $L^p(\O)$ as $m\to\infty$.

The convergence of the reconstructed traces is identical, since they satisfy an equivalent
of the stability estimate \eqref{key:stab}, namely
$\|\mathbb{T}_{\disc_m}(P^{\omega_m}_{\disc_m}\psi)\|_{L^p(\partial\O)}\le \|\gamma\psi\|_{L^p(\O)}$.

\medskip

\textbf{Step 2}: convergence of the gradient reconstructions.

The proof of this convergence follows a similar reasoning, provided that
we establish the two following convergence and stability results:
\begin{equation}\label{conv.phieps}
\nabla_{\disc_m}(P^{\omega_m}_{\disc_m}\varphi_\varepsilon)\to \nabla\varphi_\varepsilon\mbox{ in $L^p(\O)^d$ as $m\to\infty$},
\end{equation}
and
\begin{equation}\label{stab.grad}
\exists C>0\,,\;\forall m\in\NN\,,\;\forall \psi\in W^{1,p}(\O)\,,\;
\|\nabla_{\disc_m}(P^{\omega_m}_{\disc_m}\psi)\|_{L^p(\O)^d}\le C\|\nabla\psi\|_{L^p(\O)^d}.
\end{equation}
In the following, $C$ denotes a generic constant that can change from one line to the other,
but depends only on $\O$, $p$ and $\theta$.

We first aim at proving \eqref{conv.phieps}.
Let $\widetilde{P}_{\polyd_m}\varphi_\varepsilon=((\varphi_\varepsilon(x_K))_{K\in\mesh},(\varphi_\varepsilon(\overline{x}_\edge))_{\edge\in\edges})\in X_{\disc_m}$. As a consequence of \cite[Lemma 12.8 and proof of Proposition 7.36]{S1}, since $\varphi_\varepsilon\in C^\infty_c(\RR^d)$,
\begin{equation}\label{cv.tildeP}
\nabla_{\disc_m}(\widetilde{P}_{\polyd_m}\varphi_\varepsilon)\to \nabla\varphi_{\varepsilon}\mbox{
in $L^p(\O)^d$ as $m\to\infty$}.
\end{equation} 
Define the following discrete $W^{1,p}$-semi-norm on $X_{\disc_m}$:
\[
\forall v\in X_{\disc_m}\,,\;\vert v\vert_{\polyd_m,p}=\left(\sum_{K\in\mesh_m}\sum_{\edge\in\edgescv}
|\edge|d_{K,\edge}\left|\frac{v_\edge-v_K}{d_{K,\edge}}\right|^p\right)^{1/p}.
\]
It follows from \cite[Lemma 5.3]{B1} that 
\begin{equation}\label{stab.grad.2}
\forall v\in X_{\disc_m}\,,\;\|\nabla_{\disc_m}v\|_{L^p(\O)^d}\le C\vert v\vert_{\polyd_m,p}.
\end{equation}
Since $\overline{x}_\edge$ is the center of mass of $\edge$, a Taylor expansion of order 2 shows that
\begin{equation}\label{est.snorm.1}
|\varphi_\varepsilon(\overline{x}_\edge)-(\varphi_\varepsilon)_\edge|
=\left|\varphi_\varepsilon(\overline{x}_\edge)-\frac{1}{|\edge|}\int_\edge \varphi_\varepsilon(x)\ud s(x)\right|\le C \|D^2\varphi_\varepsilon\|_{C_b(\RR^d)}h_\edge^2.
\end{equation}
Moreover, \cite[Lemma A.7]{DN17} yields
\begin{equation}\label{est.snorm.2}
\begin{aligned}
|\varphi_\varepsilon(x_K)-(\varphi_\varepsilon)^{\omega_m}_K|
={}&\left|\varphi_\varepsilon(x_K)-\frac{1}{|K|}\int_\edge \omega_{m,K}(x)\varphi_\varepsilon(x)\ud x\right|\\
\le{}& C
\|\varphi_\varepsilon\|_{C^2_b(\RR^d)}h_K^2.
\end{aligned}
\end{equation}
Estimates \eqref{est.snorm.1} and \eqref{est.snorm.2}, and the properties 
\[
\frac{h_\edge}{d_{K,\edge}}\le \frac{h_K}{d_{K,\edge}}\le \theta\mbox{ for all $K\in\mesh_m$ and $\edge\in\edgescv$}
\]
and (see \cite[Lemma B.2]{S1})
\[
\sum_{K\in\mesh_m}\sum_{\edge\in\edgescv}|\edge|d_{K,\edge}=
\sum_{K\in\mesh_m}\sum_{\edge\in\edgescv}d |D_{K,\edge}|=
d\sum_{K\in\mesh_m}|K|=d|\O|
\]
show that $\vert \widetilde{P}_{\polyd_m}\varphi_\varepsilon-P^{\omega_m}_{\disc_m}\varphi_\varepsilon\vert_{\polyd_m,p}\le C\|\varphi_\varepsilon\|_{C^2_b(\RR^d)}h_{\mesh_m}$.
Applying then \eqref{stab.grad.2} to $v=\widetilde{P}_{\polyd_m}\varphi_\varepsilon-P^{\omega_m}_{\disc_m}\varphi_\varepsilon$ gives
\[
\nabla_{\disc_m}(\widetilde{P}_{\polyd_m}\varphi_\varepsilon)-
\nabla_{\disc_m}(P^{\omega_m}_{\disc_m}\varphi_\varepsilon)\to 0\mbox{ in $L^p(\O)^d$ as $m\to\infty$}.
\]
Combined with \eqref{cv.tildeP}, this establishes \eqref{conv.phieps}.

Let us now turn to the stability estimate \eqref{stab.grad}. By \cite[Proposition 7.15]{S1}, 
\begin{equation}\label{est.P1}
\vert P^1_{\disc_m}\psi\vert_{\polyd_m,p}\le C\|\nabla\psi\|_{L^p(\O)^d}
\end{equation}
where $P^1_{\disc_m}$ is the interpolant \eqref{def:Ppolyd} computed with the constant
weights $\omega_K=1$. Let us estimate $\vert P^1_{\disc_m}\psi-P^{\omega_m}_{\disc_m}\psi\vert_{\polyd_m,p}$. For $K\in\mesh_m$, since $\frac{1}{|K|}\int_K \omega_{m,K}(x)\ud x=1$, we can write
\begin{align*}
\left|(P^1_{\disc_m}\psi)_K-(P^{\omega_m}_{\disc_m}\psi)_K\right|
={}&\left|\frac{1}{|K|}\int_K\psi(y)\ud y-\frac{1}{|K|}\int_K \omega_{m,K}(x)\psi(x)\ud x\right|\\
={}&\left|\frac{1}{|K|}\int_K \omega_{m,K}(x)\left(
\frac{1}{|K|}\int_K\psi(y)\ud y-\psi(x)\right)\ud x\right|\\
\le{}& \frac{\theta^d}{|K|}\int_K \left|\frac{1}{|K|}\int_K\psi(y)\ud y-\psi(x)\right|\ud x.
\end{align*}
Use then Jensen's inequality and \cite[Lemma B.7]{S1} to write
\begin{align}
\left|(P^1_{\disc_m}\psi)_K-(P^{\omega_m}_{\disc_m}\psi)_K\right|^p
\le{}& \frac{\theta^{dp}}{|K|}\int_K\left|\frac{1}{|K|}\int_K\psi(y)\ud y-\psi(x)\right|^p\ud x\nonumber\\
\le{}& \frac{Ch_K^p}{|K|}\int_K|\nabla\psi(x)|^p\ud x.\label{est.P1P}
\end{align}
Since $P^1_{\disc_m}\psi$ and $P^{\omega_m}_{\disc_m}\psi$ have the same face values, only
the difference of their cell values is involved in the computation of
$\vert P^1_{\disc_m}\psi-P^{\omega_m}_{\disc_m}\psi\vert_{\polyd_m,p}$. Hence,
dividing \eqref{est.P1P} by $d_{K,\edge}^p$ for any $\edge\in\edgescv$, using $h_K/d_{K,\edge}\le \theta$,
multiplying by $|\edge|d_{K,\edge}$, summing over $\edge\in\edgescv$, using $\sum_{\edge\in\edgescv}|\edge|d_{K,\edge}=d|K|$, and summing over $K\in\mesh_m$ leads to
\begin{align*}
\vert P^1_{\disc_m}\psi-P^{\omega_m}_{\disc_m}\psi\vert_{\polyd_m,p}
\le{}& C \|\nabla\psi\|_{L^p(\O)^d}.
\end{align*}
Combined with \eqref{est.P1}, this shows
that $\vert P^{\omega_m}_{\disc_m}\psi\vert_{\polyd_m,p} \le C\|\nabla\psi\|_{L^p(\O)^d}$.
The estimate \eqref{stab.grad} then follows from \eqref{stab.grad.2} applied
to $v=P^{\omega_m}_{\disc_m}\psi$. \end{proof}


\bibliographystyle{siam}
\bibliography{thesisref}

\end{document}
